\def\rr{{\mathbb R}}
\def\fz{\infty}
\def\az{\alpha}
\def\dist{{\mathop\mathrm{\,dist\,}}}
\def\loc{{\mathop\mathrm{\,loc\,}}}
\def\lip{{\mathop\mathrm{\,Lip\,}}}
\def\ez{\epsilon}
\def\bint{{\ifinner\rlap{\bf\kern.25em--}
\int\else\rlap{\bf\kern.45em--}\int\fi}\ignorespaces}
\def\bbint{{\ifinner\rlap{\bf\kern.35em--}
\hspace{0.078cm}\int\else\rlap{\bf\kern.45em--}\int\fi}\ignorespaces}
\def\esup{\mathop\mathrm{\,esssup\,}}
\def\r{\right}
\newtheorem{thm}{Theorem}[section]
\newtheorem{lem}[thm]{Lemma}%[section]     %@@!!@@!!
\newtheorem{prop}[thm]{Proposition}%[section]    %@@!!@@!!
\newtheorem{cor}[thm]{Corollary}%[section]    %@@!!@@!!
\numberwithin{equation}{section}
\theoremstyle{remark}
\newtheorem{rem}[thm]{Remark}%[section]    %@@!!@@!!
\def\bint{{\ifinner\rlap{\bf\kern.35em--}
\int\else\rlap{\bf\kern.45em--}\int\fi}\ignorespaces}
\title{An asymtotic sharp Sobolev regularity  for planar infinity harmonic functions }
\author{Herbert Koch,  Yi Ru-Ya Zhang and Yuan Zhou}
\address{H. Koch: Institute of Mathematics, Bonn University, Endenicher Allee 60, Bonn 53115, Germany}
\email{koch@math.uni-bonn.de}
\address{Y. Zhang: Hausdorff Center for Mathematics, Endenicher Allee 62, Bonn 53115, Germany}
\email{yizhang@math.uni-bonn.de}
\address{Y. Zhou: Department of Mathematics, Beihang University, Beijing 100191, P.R. China}
\email{yuanzhou@buaa.edu.cn}
\date{\today}
\begin{document}

 \allowdisplaybreaks
\arraycolsep=1pt
\maketitle
\begin{center}
\begin{minipage}{13cm}
{\bf Abstract.}
Given an arbitrary planar $\infty$-harmonic function   $u$,
for each $\alpha>0$ we establish a quantitative $W^{1,2}_\loc$-estimate  of $|Du|^\alpha $,
which is  sharp as $\alpha\to0$. 
We also show that the distributional determinant of $u$ is  a Radon measure enjoying some quantitative  lower and upper bounds. 
As a by-product,  for  each $p>2$ we   obtain some quantitative $W^{1,p}_\loc$-estimates  of $u$,
and consequently,  an $L^p$-Liouville  property  for $\infty$-harmonic functions in whole plane.  

\end{minipage}
\end{center}
%\tableofcontents
%\contentsline{section}{\numberline{ } References}{36}

\section{Introduction}

Let $\Omega\subset \mathbb R^n$ be a domain (an open connected subset).   
 A  function $u\in C(\Omega)$  is  $\infty$-harmonic   in $\Omega$ if 
\begin{equation}\label{infty equ}
-\Delta_{\infty} u := -u_iu_ju_{ij}   =0 \quad {\rm in}\ \Omega
\end{equation}
in   viscosity sense; see \cite{j1993}.
In this paper,  
  $v_i=\frac{\partial v} {\partial x_i}$ if   $v\in C^1(\Omega)$, or $v_i$  denotes
 the distributional  derivation  in direction $i$  if   $v\in L^2_\loc(\Omega)$,  and
 $v_{ij}= \frac{\partial^2 v} {\partial x_i\partial x_j}$ if    $v\in C^2(\Omega)$. 
Write  $Dv=(v_i)_{i=1}^n$, $D^2v=(v_{ij})_{i,j=1}^n$, and $D^2vDv=(v_{ij}v_j)_{i=1}^n$.     
We always use the Einstein summation convention, that is, $f_ig_i=\sum_{i=1}^nf_ig_i$ for  vectors $(f_i)_{i=1}^n$ 
and $(g_i)_{i=1}^n$.

The main purpose is to prove the following quantitative  Sobolev regularity of  $\infty$-harmonic functions in planar    domains (that is, $n=2$). 

\begin{thm} \label{mainthm} Let $\Omega\subset\mathbb R^2$ be a domain and 
 $u$ be an $\infty$-harmonic function   in $\Omega$.
   For  each $\alpha>0$, we have  $|Du|^\alpha\in W^{1,2}_\loc(\Omega)$ with   
\begin{equation}\label{ds 1}
\| D|Du|^\alpha \|_{L^2(V)}\le  C (\alpha)\frac 1{\dist(V,\partial U)}
\||Du|^\alpha\|_{L^2(U)}\quad\forall V\Subset U\Subset \Omega,
\end{equation}
 and 
\begin{align}\label{point approx equ}
   (|Du|^\alpha)_i u_i=0  \quad {\rm almost\ everywhere\ in}\ \Omega. 
\end{align}
The constant  $C(\alpha)$ above depend  only on $\alpha$.
\end{thm}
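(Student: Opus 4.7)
The plan is to approximate $u$ by smooth functions, derive a Caccioppoli-type estimate for $|Du^\epsilon|^\alpha$ uniform in the approximation parameter, and pass to the limit. For the approximation I would fix $U' \Subset \Omega$ and take $u^\epsilon \in C^\infty(U')$ to solve the regularized Aronsson equation $-\Delta_\infty u^\epsilon - \epsilon \Delta u^\epsilon = 0$ in $U'$ with $u^\epsilon = u$ on $\partial U'$ (an equivalent route is via $p$-harmonic approximations as $p \to \infty$). Standard viscosity-solution stability, combined with Savin's $C^1$-regularity for planar $\infty$-harmonic functions, gives locally uniform convergence $u^\epsilon \to u$ and, crucially, $Du^\epsilon \to Du$ in $U'$.

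The heart of the argument is the Caccioppoli estimate at the approximate level, and this is where planarity is decisive. In the frame $\nu = Du/|Du|$, $\tau = \nu^\perp$ the Aronsson equation forces $u_{\nu\nu} = 0$, and hence $\Delta u = u_{\tau\tau}$; differentiating the equation in $x_k$ and contracting with $u_k$ yields $u_i u_j u_k u_{ijk} = -2 |D^2 u Du|^2$. Combining these with their $\epsilon$-perturbed analogues for $u^\epsilon$, the right algebraic manipulation should produce a pointwise identity that, when tested against a standard cutoff $\phi \in C_c^\infty(U)$ with $\phi \equiv 1$ on $V$ and $|D\phi| \le 2/\dist(V, \partial U)$, integrated by parts once, and closed via Young's inequality, yields
\begin{equation*}
\int \phi^2 |D|Du^\epsilon|^\alpha|^2 \, dx \le C(\alpha) \int |D\phi|^2 |Du^\epsilon|^{2\alpha} \, dx + o_{\epsilon \to 0}(1).
\end{equation*}
To handle the critical set $\{Du^\epsilon = 0\}$ I would first work with the smooth proxy $(|Du^\epsilon|^2 + \delta)^{\alpha/2}$ and let $\delta \to 0$. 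Passing $\epsilon \to 0$ by weak compactness in $W^{1,2}_\loc$ together with the locally uniform convergence of $Du^\epsilon$ then delivers \eqref{ds 1}.

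For the pointwise identity \eqref{point approx equ}, at the smooth level the chain rule and the regularized equation give
\begin{equation*}
u^\epsilon_i (|Du^\epsilon|^\alpha)_i = \alpha |Du^\epsilon|^{\alpha-2} u^\epsilon_i u^\epsilon_j u^\epsilon_{ij} = -\alpha \epsilon |Du^\epsilon|^{\alpha-2} \Delta u^\epsilon,
\end{equation*}
and the right-hand side vanishes distributionally as $\epsilon \to 0$; together with the $W^{1,2}_\loc$-weak convergence of $|Du^\epsilon|^\alpha$ and the uniform convergence of $u^\epsilon_i$, this yields \eqref{point approx equ} almost everywhere. The main obstacle is the Caccioppoli step: it would be tempting to hope that $\log |Du|$ is harmonic (in analogy with complex-analytic settings), but Aronsson's example $u = x^{4/3} - y^{4/3}$ already shows this fails, so one must carefully extract the correct divergence-form identity from the two ingredients above, track the $\epsilon$-errors uniformly, and verify that the constant depends only on $\alpha$—with the asymptotic behavior as $\alpha \to 0$ matching the sharpness claim of the abstract.
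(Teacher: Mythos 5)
Your high-level scheme (approximate by solutions of $-\Delta_\infty u^\epsilon - \epsilon\Delta u^\epsilon = 0$, prove a uniform Caccioppoli estimate for $(|Du^\epsilon|^2+\delta)^{\alpha/2}$, pass to the limit) is the same as the paper's, but the two steps where all the work lies are either missing or misattributed, so as written this is not a proof.

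First, you assert that ``standard viscosity-solution stability, combined with Savin's $C^1$-regularity, gives locally uniform convergence $u^\epsilon\to u$ and, crucially, $Du^\epsilon\to Du$.'' The uniform convergence of $u^\epsilon$ is indeed standard (Evans--Smart), but the convergence of gradients is not: $u^\epsilon$ is only uniformly Lipschitz, Savin's $C^1$-theorem concerns the limit $u$ and does not provide uniform-in-$\epsilon$ $C^{1,\beta}$ estimates on $u^\epsilon$ (the ellipticity degenerates as $\epsilon\to0$), and uniform convergence of $u^\epsilon$ plus $C^1$-regularity of $u$ does not imply convergence of $Du^\epsilon$. In the paper this is precisely Theorem~1.4, proved via the uniform Caccioppoli estimate together with a quantitative integral flatness estimate (Lemma~2.7) derived from the same determinant machinery; you cannot invoke it as an off-the-shelf fact. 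Without this strong convergence you cannot identify the weak $W^{1,2}_\loc$ limit of $|Du^\epsilon|^\alpha$ with $|Du|^\alpha$, so the final passage to the limit in \eqref{ds 1} and the a.e.\ identity \eqref{point approx equ} are not justified.

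Second, the Caccioppoli estimate itself is where you write ``the right algebraic manipulation should produce a pointwise identity'' — this is exactly the gap. The frame computation you begin ($u_{\nu\nu}=0$ and $u_iu_ju_ku_{ijk}=-2|D^2u\,Du|^2$) differentiates the equation, which introduces third-order error terms at the $\epsilon$-level that are not obviously controllable. The paper avoids differentiating the equation entirely: the decisive planar fact (Lemma~2.1) is that $-\det D^2v = -\tfrac12\mathrm{div}(\Delta v\,Dv - D^2vDv)$ is in divergence form, and (Lemma~2.3) the regularized equation gives $(-\det D^2u^\epsilon)|Du^\epsilon|^2 = |D^2u^\epsilon Du^\epsilon|^2 + \epsilon(\Delta u^\epsilon)^2$, hence $-\det D^2 u^\epsilon = |D|Du^\epsilon||^2 + \epsilon(\Delta u^\epsilon)^2/|Du^\epsilon|^2 \ge 0$ a.e. Testing the divergence form against $\phi=(|Du^\epsilon|^2+\kappa)^{\alpha-1}\xi^2$ and using this nonnegativity is what closes the Caccioppoli estimate with a constant depending only on $\alpha$. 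Your frame identities can be used to \emph{derive} this determinant identity in two dimensions, so your instinct is not wrong, but the specific structure (divergence form plus sign) is what makes the estimate go through, and you need to state and use it rather than gesture at it.
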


As indicated by  $\infty$-harmonic function \begin{equation}\label{model}
w(x_1,x_2): =x_1^{4/3}- x_2^{4/3}\quad{\rm in}\ \mathbb R^2 \end{equation} 
given by the Aronsson \cite{a84}, we will see that $|Du|^\alpha\in W^{1,2}_\loc(\Omega)$  in Theorem \ref{mainthm}   is  sharp when   $\alpha\to0$. 
Precisely, set   \begin{equation}\label{palpha}p_\alpha:=3\quad{\rm if} \ \alpha\ge 1 \quad{\rm  and }\quad p_\alpha:=\frac{6}{3-\alpha} \quad{\rm if}\ \alpha\in(0,1).\end{equation}   
Note that $p_\alpha\to  2$ as $ \alpha\to0$.  By directly calculation,   we have the following result. 

\begin{lem} \label {aronssonf}
 For each $\alpha>0$, $$ |Dw|^\alpha\in W^{1,p}_\loc(\mathbb R^2)\quad \forall  p<p_\alpha \quad{\rm but} \quad |Dw|^\alpha\notin W^ {1,p_\alpha} _\loc(\mathbb R^2).$$  
Moreover,  
$$ \log |Dw| \in W^{1,p}_\loc(\mathbb R^2)\cap BMO_\loc(\mathbb R^2)\quad \forall  p<2 \quad{\rm but} \quad \log |Dw| \notin W^ {1,2} _\loc(\mathbb R^2).$$  
\end{lem}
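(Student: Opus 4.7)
The plan is to exploit the explicit form of $w$ and reduce everything to one-dimensional integrals via polar coordinates. A direct computation gives
\[
 |Dw|^2 \;=\; \tfrac{16}{9}\bigl(|x_1|^{2/3}+|x_2|^{2/3}\bigr),
\]
which in polar coordinates $x=r(\cos\theta,\sin\theta)$ factorises as $|Dw|=\tfrac{4}{3}\,r^{1/3}\sqrt{h(\theta)}$ with $h(\theta):=|\cos\theta|^{2/3}+|\sin\theta|^{2/3}$. The function $h$ is $\tfrac{\pi}{2}$-periodic, smooth away from the half-axis directions $\theta\in\tfrac{\pi}{2}\zz$, and bounded above and below by positive constants. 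Consequently both $|Dw|^\alpha$ and $\log|Dw|$ split cleanly into a pure power (respectively logarithm) of $r$ times an angular factor, reducing every $L^p$ question to the product of a radial and an angular one-dimensional integral.

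For the first inclusion I would write out the polar gradient magnitude
\[
 |D(|Dw|^\alpha)|^2 \;=\; C_\alpha\, r^{2\alpha/3-2}\bigl[h(\theta)^\alpha + c\, h(\theta)^{\alpha-2}h'(\theta)^2\bigr],
\]
so that the local integral of its $p$-th power against $r\,dr\,d\theta$ decouples into a radial factor $r^{p\alpha/3-p+1}$ and an angular factor $[\,\cdot\,]^{p/2}$. The radial factor is locally integrable near $r=0$ precisely when $p<\tfrac{6}{3-\alpha}$ (for $\alpha<3$, with no obstruction if $\alpha\geq3$); the angular factor blows up like $|\theta-k\pi/2|^{-p/3}$ at each axis angle, because $h'$ has a $|\cdot|^{-1/3}$ singularity there, and is integrable precisely when $p<3$. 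Taking the stricter of the two thresholds reproduces $p_\alpha$ exactly. At $p=p_\alpha$ the binding integral reduces to $\int_0^{\cdot} t^{-1}\,dt$, which diverges logarithmically, so the endpoint failure is automatic.

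The log statement is handled by the same template. Writing $\log|Dw|=\mathrm{const}+\tfrac{1}{3}\log r+\tfrac{1}{2}\log h(\theta)$ one reads off $|D\log|Dw||^2=r^{-2}\bigl[\tfrac{1}{9}+\tfrac{(h')^2}{4h^2}\bigr]$, so the $p$-th power, weighted by $r\,dr\,d\theta$, carries the radial factor $r^{1-p}$; this is integrable at $r=0$ exactly for $p<2$, with logarithmic divergence at $p=2$. For $BMO_\loc$ it suffices to note that $\tfrac{1}{2}\log h(\theta)$ is bounded (since $h$ is bounded away from $0$ and $\infty$), while $\tfrac{1}{3}\log r=\tfrac{1}{3}\log|x|$ is the archetypal BMO function on $\mathbb R^2$, and the sum of a bounded function and a BMO function is BMO.

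The computation itself is essentially mechanical; the only point that really deserves care is to keep the two independent sources of borderline behaviour—radial at $r=0$ and angular at the coordinate half-axes—clearly separated, since for $\alpha\geq1$ it is the axis singularity that is binding whereas for $\alpha\in(0,1)$ the origin takes over, and conflating them would produce the wrong threshold.
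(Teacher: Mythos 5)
Your proposal is correct and arrives at exactly the same thresholds by essentially the same route as the paper: reduce the two-dimensional integral to a product of two one-dimensional integrals that separately capture the origin singularity and the axis singularity, then read off the critical exponent from each. Where the paper works in Cartesian coordinates and uses the ray substitution $x_2=tx_1$ over the fundamental octant $0<x_2<x_1<1$ (the $t$-integral playing the role of the angular factor and the $x_1$-integral the radial one), you work in polar coordinates $(r,\theta)$; these are equivalent parameterizations of the same decoupling, with $t=\tan\theta$. Your explicit identification of the two independent borderline mechanisms — the $r^{2\alpha/3-2}$ weight at the origin governing the $\frac{6}{3-\alpha}$ threshold, and the $|\theta - k\pi/2|^{-2/3}$ blow-up of $(h')^2$ along the axes governing the $p<3$ threshold — is a slightly cleaner framing, and it sidesteps the paper's inaccurate side remark that $|D|Dw|^2|$ is bounded on compact sets away from the origin (it is not: it blows up along the axes, though the paper's subsequent computation over $(-1,1)^2$ is correct and the conclusion unaffected). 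The endpoint divergence argument and the BMO claim for $\log|Dw|$ (bounded angular part plus $\frac13\log|x|\in BMO$) both match the paper's reasoning.
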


 Below, we show that  the  distributional determinant  of any planar  $\infty$-harmonic function  is  a  Radon measure   enjoying some lower and upper bounds. 
 See Remark \ref{detrem} for the definition of distributional determinant for functions in $W^{1,2}_\loc(\Omega)$.

\begin{thm} \label{mainthm1}Let $\Omega\subset\mathbb R^2$ be a domain and 
 $u$ be an $\infty$-harmonic function   in $\Omega$.
Then  the  distributional determinant $-\det D^2u dx$ is a  Radon measure satisfying 
 $$-\det D^2u   \ge |D|Du||^2 $$ where $=$ holds when  $u\in C^2(\Omega)$,  
and 
 $$\|-\det D^2u\|(V)\le C \frac 1{[\dist(V,\partial U)]^2}
\||Du| \|^2_{L^2(U)}\quad\forall V\Subset U\Subset \Omega,$$
 where the constant $C$ above  is  absolute.
\end{thm}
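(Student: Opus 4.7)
For $u\in W^{1,2}_\loc(\Omega)$ (valid since an $\infty$-harmonic $u$ is locally Lipschitz), the distributional determinant admits the null-Lagrangian representation
\[
\langle -\det D^2 u,\phi\rangle = \int_\Omega \Bigl[\tfrac12 u_1^2\,\phi_{22} + \tfrac12 u_2^2\,\phi_{11} - u_1 u_2\,\phi_{12}\Bigr]\,dx,\qquad \phi\in C^\infty_c(\Omega),
\]
obtained by integrating the classical identity $\det D^2 u=\partial_1(u_1u_{22})-\partial_2(u_1u_{12})$ by parts twice. Since $Du\in L^\infty_\loc$, this is a distribution of order at most two, and the upper bound in the statement will follow by plugging a suitable cutoff into it.

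In the classical $C^2$ setting I would first solve the $\infty$-Laplace equation $u_1^2u_{11}+2u_1u_2u_{12}+u_2^2u_{22}=0$ for $u_{11}$ on $\{u_1\neq 0\}$ and substitute into $\det D^2u=u_{11}u_{22}-u_{12}^2$; the numerator becomes a perfect square, giving $-\det D^2 u = (u_1u_{12}+u_2u_{22})^2/u_1^2 = (u_iu_{i2})^2/u_1^2$. Independently, from $(|Du|)_j=u_iu_{ij}/|Du|$ combined with the a.e.\ orthogonality $u_j(|Du|)_j=0$ supplied by Theorem~\ref{mainthm} with $\alpha=1$, one computes $|D|Du||^2 = (u_iu_{i2})^2/u_1^2$. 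Hence $-\det D^2u = |D|Du||^2$ pointwise in the $C^2$ case (the set $\{u_1=0\}$ is handled symmetrically, while $\{|Du|=0\}$ contributes nothing).

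For a general planar $\infty$-harmonic $u$ my plan is to approximate $u$ by a sequence $u^k$ of smoother $\infty$-harmonic functions -- for instance by $p$-harmonic approximation as $p\to\infty$, in which case the identity above survives up to an error of size $O(1/(p-2))$ that can be controlled via the quantitative $\alpha$-estimates of Theorem~\ref{mainthm} applied uniformly to $u^k$ -- and then pass to the limit. Strong $L^2_\loc$ convergence $Du^k\to Du$ transfers through the null-Lagrangian formula to yield $-\det D^2 u^k \to -\det D^2 u$ in $\mathcal D'(\Omega)$; simultaneously, uniform $W^{1,2}_\loc$ bounds on $|Du^k|$ (again from Theorem~\ref{mainthm}) let us extract a subsequence with $D|Du^k|\rightharpoonup D|Du|$ weakly in $L^2_\loc$. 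For $\phi\in C^\infty_c(\Omega)$ with $\phi\ge 0$, weak lower-semicontinuity of the $L^2$-norm then gives
\[
\langle -\det D^2 u,\phi\rangle = \lim_{k\to\infty}\int_\Omega \phi\,|D|Du^k||^2\,dx \;\ge\; \int_\Omega \phi\,|D|Du||^2\,dx,
\]
so $-\det D^2 u$ is a nonnegative distribution, hence a Radon measure by the Riesz representation theorem, dominating $|D|Du||^2$, with equality in the $C^2$ case.

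Finally, for the upper bound I would choose $\phi\in C^\infty_c(U)$ with $\phi\equiv 1$ on $V$, $0\le\phi\le 1$, and $\|D^2\phi\|_\infty\le C\dist(V,\partial U)^{-2}$. Nonnegativity of $-\det D^2 u$ gives $\|-\det D^2 u\|(V)\le\langle -\det D^2 u,\phi\rangle$, and plugging $\phi$ into the null-Lagrangian formula bounds the right-hand side by a universal constant times $\dist(V,\partial U)^{-2}\,\||Du|\|_{L^2(U)}^2$. The hardest part will be the approximation step: no canonical $C^2$ $\infty$-harmonic approximation is available a priori, so one must either track the error from $p$-harmonic approximation using the sharp estimates of Theorem~\ref{mainthm} or construct a smoothing scheme compatible with the Aronsson equation; once such an approximation is in hand, continuity of the null-Lagrangian and weak compactness close the argument.
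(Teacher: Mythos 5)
Your high-level strategy matches the paper's: express the distributional determinant via the null-Lagrangian formula, prove a pointwise identity $-\det D^2 u = |D|Du||^2$ plus a nonnegative remainder for a regularized approximation, pass to the limit using strong $L^2_\loc$ convergence of gradients and weak lower semicontinuity, deduce nonnegativity (hence Radon-measure structure) and the upper bound from a cutoff. Your computation in the $C^2$ case is correct and essentially the paper's (though you should note, as the paper does via Aronsson's theorem, that a $C^2$ $\infty$-harmonic function with a critical point is globally constant, so $\{Du=0\}$ really is trivial).

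The genuine gap is in the approximation step, which you yourself flag as the hardest part. You propose $p$-harmonic approximation as $p\to\infty$, and you want to control the error term $\tfrac{1}{p-2}(\Delta u_p)^2$ and the uniform $W^{1,2}_\loc$ norms of $|Du_p|$ by ``the quantitative $\alpha$-estimates of Theorem~\ref{mainthm} applied uniformly to $u^k$.'' But Theorem~\ref{mainthm} is a statement about $\infty$-harmonic functions, not about $p$-harmonic ones, and invoking it for the $u_p$ is not justified. More importantly, you need strong $L^2_\loc$ convergence $Du_p\to Du$ to pass the null-Lagrangian through the limit, and this is precisely what is nontrivial: for $p$-harmonic approximations it is not off-the-shelf, and you do not supply it. The paper resolves exactly this issue by choosing the regularization $-\Delta_\infty u^\epsilon-\epsilon\Delta u^\epsilon=0$, $u^\epsilon=u$ on $\partial U$, for which it proves (Theorem~\ref{strong converge}, via the Caccioppoli-type estimate Lemma~\ref{uniform 1} and the integral flatness estimate Lemma~\ref{uniform 2}) both the uniform bound on $\|D|Du^\epsilon|^\alpha\|_{L^2_\loc}$ and the strong convergence $Du^\epsilon\to Du$ in $L^p_\loc$ for all $p<\infty$. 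The key identity $(-\det D^2 u^\epsilon)|Du^\epsilon|^2=|D^2u^\epsilon Du^\epsilon|^2+\epsilon(\Delta u^\epsilon)^2$ then gives the nonnegativity and the pointwise equality $-\det D^2 u^\epsilon=|D|Du^\epsilon||^2+\epsilon(\Delta u^\epsilon)^2/|Du^\epsilon|^2$ a.e.\ (Lemma~\ref{I}), and everything else follows as you sketched. If you want to replace the exponential regularization by $p$-harmonic approximants you would need to prove the analogous uniform Sobolev estimates and flatness estimates for $p$-harmonic functions, which is comparable in difficulty to what the paper does; the plan as written leaves that to be done.
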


Above $\|-\det D^2u\|(V)$ denotes the total measue of the open set $V$ with respect to the Radon measure $-\det D^2u\,dx$. 
Here, we list some remarks about  Theorem \ref{mainthm}, Lemma \ref{aronssonf} and Theorem \ref{mainthm1}.
\begin{rem}
(i) For planar $p$-harmonic function with $p>2$, recall that $|Du|^{(p-2)/2}Du \in W^{1,2}_\loc(\Omega)$  as proved in \cite{bi}.  
Theorem \ref{mainthm} can be viewed as some analogue for planar $\infty$-harmonic functions. 

(ii) Note that the function $u(x)=|x| \in W^{1,\infty}_\loc(\mathbb R^2)$  satisfies  $|Du|^2\in W^{1,2}_\loc(\mathbb R^2)$ and \eqref{point approx equ}, but is not $ \infty$-harmonic.  

(iii)  We make the following conjecture. 
\medskip 

\noindent {\it Conjecture.} Let $u $ be a planar $\infty$-harmonic function. Then the following hold:   
 
(a) $|Du|^\alpha\in W^{1,p}_\loc $ for   $2<p<p_\alpha$  and $\alpha>0$,   where $p_\alpha $ is given by \eqref{palpha}; 

(b) $-\det D^2u\in L^{p  }_\loc $ for  $1\le p<3/2$;  

(c) $ \log (|Du|^2+\kappa)\in BMO_\loc \cap W^{1,p}_\loc $ for  $p<2$ uniformly in $\kappa>0$. 
\end{rem}

The infinity Laplacian $ \Delta_{\infty} $ is a highly degenerate nonlinear second elliptic partial differential operator.  
The  equation \eqref{infty equ} is derived  by Aronsson 1960's as the Euler-Lagrange's equation when absolutely minimizing  
the $L^\infty$-functional $$\mathcal F_\infty(u,\Omega)=\esup_\Omega|Du|^2;$$ see \cite{a1,a2,a3,A1968,a4}.  
A function $u \in W^{1,\infty}_\loc(\Omega)$ is an absolute minimizer in $\Omega$ if 
$$\mathcal F_\infty(u, V)\le \mathcal F_\infty(v, V) $$
whenever $v\in W^{1,\infty}(V)$ and $v=u$ on $\partial V$.
Jensen  in 1993 identified the viscosity solutions of the equation \eqref{infty equ} (that is, $\infty$-harmonic functions)   with   absolute minimizers of such $L^\infty$-functional, see \cite{j1993}.

The $\infty$-harmonic functions (equivalently, absolute minimizers) are known to be  differentiable almost everywhere by \cite{j1993}; but not necessarily $C^2$ as shown by the Aronsson's function  in \eqref{model}.
The main issue in this direction to understand the possible regularity of 
$\infty$-harmonic functions.
Crandall et al.\ \cite{ceg} first  obtained the linear approximation property.  
Later, for  planar  $\infty$-harmonic functions, 
 the $C^1$-regularity  was  proved by Savin \cite{s05}, $C^{1,\alpha}$-regularity  with  $0<\alpha< 1$ by Evans-Savin \cite{es08}
and boundary $C^1$-regularity by \cite{wy}.  The key idea  is to establish a  flatness estimate by the planar topology and comparison property with cones, as first  observed by Savin \cite{s05}. 
When $n\ge 3$, the $C^1$ and $C^{1,\alpha}$-regularity of $\infty$-harmonic functions are still open. 
Recent progress is made by  Evans-Smart \cite{es11a,es11b}, who obtained the everywhere differentiability.
Their approach  is approximating the $\infty$-harmonic functions via  exponential harmonic functions (originally given by Evans  \cite{e03,ey04}),
and then establishing  a weaker flatness estimate via a PDE argument. 

Theorem \ref{mainthm} above gives  the asymptotic sharp Sobolev $W^{1,2}_\loc(\Omega)$-regularity of $ |Du|^\alpha$ for any  $\infty$-harmonic function $u$ in $\Omega \subset\mathbb R^2$ and $\alpha>0$. 
To prove  Theorem 1.1, we  also approximate $u$ via  exponential harmonic functions. 
 Precisely,  given an arbitrary domain $U\Subset \Omega$,  
for $\epsilon\in(0,1]$   let $u^\epsilon\in C^\infty(U)\cap C(\overline U)$ satisfy  
\begin{equation*}%\label{regular infty equ}
%\left\{ \begin{array}{rl}
-\Delta_{\infty} u^{\ez} -\ez \Delta u^{\ez}=0 \ \text{in $U$}, \quad u^{\ez}=u   \ \text{on $\partial U$}.
%\end{array} \right.
\end{equation*}
It is known that $u^\epsilon \to u$  uniformly in $\overline U$, see   \cite{ey04, es11b}  (or Theorem \ref{es2011} below). 
In this paper, we manage to show the  following  strong
$W^{1,p}_\loc(\Omega)$-convergence for  $1\le p<\infty$, 
which may  have its own interest. 

\begin{thm} \label{strong converge}
For each $\alpha>0$, we have $|Du^\epsilon|^\alpha\to |Du|^\alpha$  in $ L^p_{\loc}(U)$ for all $p\in[1,\infty)$ and weakly in $W^{1,2}_\loc(U)$. In particular,  $u^{\ez} \to u$ strongly in $W^{1,\,p}_{\loc}(U)$ for all $p\in[1,\infty)$. 
\end{thm}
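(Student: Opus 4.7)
The strategy is to combine a uniform-in-$\epsilon$ version of the estimate \eqref{ds 1} applied to the smooth approximations $u^{\epsilon}$ with compactness, and then identify the weak limit as $|Du|^{\alpha}$. The first task would be to verify that the proof of Theorem \ref{mainthm} carries over to $u^{\epsilon}$ with a constant independent of $\epsilon\in(0,1]$, namely
\[
\|D|Du^{\epsilon}|^{\alpha}\|_{L^{2}(V)}\le C(\alpha)\frac{1}{\dist(V,\partial U)}\||Du^{\epsilon}|^{\alpha}\|_{L^{2}(U)}.
\]
Since $u^{\epsilon}$ is a classical solution of the regularized equation and the Bernstein/Caccioppoli-type computations behind Theorem \ref{mainthm} should produce only $\epsilon$-independent main terms (with the $\epsilon\Delta u^{\epsilon}$ correction contributing a non-negative error of the correct sign), this should follow by essentially the same argument. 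Together with a standard uniform interior Lipschitz bound $\|Du^{\epsilon}\|_{L^{\infty}(V)}\le C$ from comparison principles (see \cite{ey04,es11b}), this gives a uniform $W^{1,2}_{\loc}(U)$ bound on $|Du^{\epsilon}|^{\alpha}$ for each $\alpha>0$.

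Next, I would apply Rellich--Kondrachov to extract a subsequence $\epsilon_{k}\to 0$ such that $|Du^{\epsilon_{k}}|^{\alpha}$ converges weakly in $W^{1,2}_{\loc}(U)$, strongly in $L^{2}_{\loc}(U)$, and (along a further subsequence) almost everywhere, to some $g_{\alpha}\in W^{1,2}_{\loc}(U)$. At the same time, uniform $L^{\infty}_{\loc}$ bounds combined with uniform convergence $u^{\epsilon_{k}}\to u$ force $Du^{\epsilon_{k}}\rightharpoonup Du$ weakly in every $L^{p}_{\loc}(U)$, $p<\infty$. Weak lower semicontinuity immediately yields $|Du|^{\alpha}\le g_{\alpha}$ a.e.

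The main obstacle, and the crux of the argument, is the reverse inequality $g_{\alpha}\le|Du|^{\alpha}$ a.e., which must be extracted from the nonlinear structure of the limit. The plan is to pick a point $x_{0}$ that is simultaneously a Lebesgue point of $g_{\alpha}$ and a differentiability point of $u$ (a full-measure subset of $U$ by Jensen \cite{j1993}), and to compare
\[
\bint_{B(x_{0},r)}|Du^{\epsilon_{k}}|^{\alpha}\,dx\longrightarrow \bint_{B(x_{0},r)} g_{\alpha}\,dx
\]
with $|Du(x_{0})|^{\alpha}$ by first letting $k\to\infty$ and then $r\to 0$. The linear approximation property of $\infty$-harmonic functions from \cite{ceg}, together with uniform convergence $u^{\epsilon_{k}}\to u$ and the uniform Lipschitz bound, should allow one to dominate the averages of $|Du^{\epsilon_{k}}|$ by $|Du(x_{0})|$ on small balls up to an error vanishing with $r$, giving $g_{\alpha}(x_{0})=|Du(x_{0})|^{\alpha}$.

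Once the identification $g_{\alpha}=|Du|^{\alpha}$ is in hand, the limit is unique and the full net $\epsilon\to 0$ converges (not merely a subsequence). Upgrading strong $L^{2}_{\loc}$ to strong $L^{p}_{\loc}$ convergence for every $p<\infty$ is immediate from the uniform $L^{\infty}_{\loc}$ bound via Vitali's theorem. For the last assertion, taking $\alpha=2$ one writes
\[
\int_{V}|Du^{\epsilon}-Du|^{2}\,dx=\int_{V}|Du^{\epsilon}|^{2}\,dx-2\int_{V}Du^{\epsilon}\!\cdot Du\,dx+\int_{V}|Du|^{2}\,dx,
\]
which tends to $0$ since strong $L^{1}_{\loc}$ convergence of $|Du^{\epsilon}|^{2}\to |Du|^{2}$ is combined with weak $L^{2}_{\loc}$ convergence $Du^{\epsilon}\rightharpoonup Du$. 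This gives strong $W^{1,2}_{\loc}$ convergence of $u^{\epsilon}$ to $u$; strong $W^{1,p}_{\loc}$ convergence for every $p<\infty$ then follows by interpolation against the uniform $L^{\infty}_{\loc}$ bound on $Du^{\epsilon}$.
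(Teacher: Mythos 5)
Your overall architecture matches the paper's: establish a uniform-in-$\epsilon$ $W^{1,2}_\loc$ bound on $|Du^\epsilon|^\alpha$ (the paper's Lemma \ref{uniform 1}), use weak compactness and Rellich--Kondrachov to extract a weak/strong/a.e.\ limit $g_\alpha$, record that $Du^\epsilon\rightharpoonup Du$, and then identify $g_\alpha=|Du|^\alpha$ at points where $u$ is differentiable. The reduction to $\alpha=2$ and the subsequent upgrade to strong $L^p_\loc$ and strong $W^{1,p}_\loc$ via uniform $L^\infty_\loc$ bounds and the weak--strong pairing are also exactly what the paper does.

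The gap is in the crux, the identification of the limit. You claim that the linear approximation property of $\infty$-harmonic functions (a $C^0$ statement: $u$ is $o(r)$-close to its tangent plane $P$ on $B(x_0,r)$), combined with uniform convergence $u^{\epsilon_k}\to u$ and the uniform Lipschitz bound, ``should allow one to dominate the averages of $|Du^{\epsilon_k}|$ by $|Du(x_0)|$.'' This does not follow from those ingredients alone: a function can be $\lambda$-close in $C^0$ to a linear $P$ on a ball and Lipschitz with a fixed constant, yet have gradient averages that are nowhere near $DP$ (small-amplitude, high-frequency oscillation). Some quantitative use of the equation is needed to rule this out. The paper supplies precisely this missing ingredient through the integral flatness estimate, Lemma~\ref{uniform 2} and Corollary~\ref{flatcor}: integrating the determinant identity \eqref{identityIII}/\eqref{functional} against $\phi=(u^\epsilon-P)^2\xi^4$ and exploiting $-\det D^2u^\epsilon\ge 0$ gives
\[
\bint_{B(\bar x,r)}\bigl(|Du^\epsilon|^2-\langle DP,Du^\epsilon\rangle\bigr)^2\,dx \;\lesssim\; \lambda,
\]
whenever $u^\epsilon$ is $\lambda r$-close to $P$ on $B(\bar x,2r)$. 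Passing to the limit in this, using $|Du^{\epsilon}|^2\to f^{(2)}$ strongly and $Du^\epsilon\rightharpoonup Du$ weakly, then sending $r\to0$ and $\lambda\to0$, yields $f^{(2)}(\bar x)=|Du(\bar x)|^2$. Without an estimate of this integral-flatness type (i.e.\ without using the sign of $\det D^2u^\epsilon$ coming from \eqref{key inequalityII}), the step ``$C^0$-flatness $\Rightarrow$ gradient average control'' does not hold, and the proposal does not close.

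One smaller remark: for $\alpha<1$ the map $v\mapsto|v|^\alpha$ is not convex, so your appeal to weak lower semicontinuity to get $|Du|^\alpha\le g_\alpha$ directly is not valid in that range; the paper sidesteps this by first proving the identification for $\alpha=2$ and then taking powers, which you should do as well.
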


 The proof of Theorem \ref{strong converge} relies on the uniform Sobolev  regularity in Lemma \ref{uniform 1}
and the  integral flatness estimate in Lemma  \ref{uniform 2} for approximating functions $u^\ez$. 
Observing  the following  identity 
\begin{equation}\label{identity}-\det D^2u^\ez=  |D| Du^\ez||^2 + \ez\frac{(\Delta u^\ez)^2}{|Du^\epsilon|^2} \quad {\rm almost\ everywhere\ in}\ U 
\end{equation}
in  Lemma \ref{I}, and by integration  against suitable test functions,  we obtain Lemma \ref{uniform 1} and Lemma \ref{uniform 2}, for the details see Section 5.  
The strong convergence in Theorem \ref{strong converge} permits us to conclude the Theorem \ref{mainthm} and Theorem \ref{mainthm1} 
from  the uniform Sobolev estimates in Lemma \ref{uniform 1}; see Section 3 for the proofs. 
The  asymptotic sharpness of Theorem \ref{mainthm} (that is, Lemma \ref{aronssonf}) will be also given after Theorem \ref{mainthm} in Section 3. 

%Note that in the proofs,  we do not use any of the known everywhere differentiability, $C^1$ or  $C^{1,\alpha}$-regularity of planar $\infty$-harmonic functions.
Moreover, by  integration \eqref{identity} against some other test functions, we obtain some   quantitative Sobolev estimate of $u^\epsilon$ in Lemma \ref{sobolev of u}; for the details see also Section 5. 
The strong convergence in Theorem \ref{strong converge} allows to conclude  from  them the following  Theorem  \ref{mainthm0}, see Section 3 for the proof. 

\begin{thm} \label{mainthm0} Let $\Omega\subset\mathbb R^2$ be a domain and    $p>2$. 
For any  $\infty$-harmonic function  $u$ in $\Omega$  we have 
\begin{equation}\label{ds 2}
\| |Du| \|_{L^p(V)}\le  C(p) \frac 1{\dist(V,\partial U) }
\|  u-a \|_{L^p(U)}\quad\forall V\Subset U\Subset \Omega,\          \ \forall a\in\rr,
\end{equation}
where the constant $C(p) $ depends only on $p$. 
 \end{thm}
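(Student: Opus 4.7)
The plan is to bridge to $u$ via the exponential harmonic approximants $u^{\ez}$: first establish an $\ez$-uniform analogue of \eqref{ds 2} at the approximation level, then pass to the limit using Theorem \ref{strong converge}.

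Fix $V \Subset U \Subset \Omega$, $p > 2$, and $a \in \rr$, and let $u^{\ez} \in C^\infty(U) \cap C(\overline U)$ solve $-\Delta_{\infty} u^{\ez} - \ez \Delta u^{\ez} = 0$ in $U$ with $u^{\ez} = u$ on $\partial U$. The core of the argument is Lemma \ref{sobolev of u}, which asserts the $\ez$-uniform Caccioppoli-type bound
$$
\| |Du^{\ez}| \|_{L^p(V)} \le C(p)\, \dist(V, \partial U)^{-1}\, \| u^{\ez} - a \|_{L^p(U)},
$$
with $C(p)$ depending only on $p$. To derive this lemma, I would integrate the identity \eqref{identity} for $-\det D^2 u^{\ez}$ against a cutoff-weighted power of $u^{\ez} - a$: both summands on the right-hand side of \eqref{identity} are non-negative, while $\det D^2 u^{\ez}$ in two dimensions admits a divergence structure via the vector field $(u^{\ez}_1 u^{\ez}_{22} - u^{\ez}_2 u^{\ez}_{12},\ u^{\ez}_2 u^{\ez}_{11} - u^{\ez}_1 u^{\ez}_{12})$, so integration by parts trades Hessian factors for first-order quantities, and Cauchy--Schwarz (or Young's inequality) absorbs the residual second-derivative norm into the non-negative left-hand side.

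Given the uniform bound, I pass $\ez \to 0^+$: Theorem \ref{strong converge} yields $|Du^{\ez}| \to |Du|$ strongly in $L^p(V)$, and the uniform convergence $u^{\ez} \to u$ on $\overline U$ gives $\| u^{\ez} - a \|_{L^p(U)} \to \| u - a \|_{L^p(U)}$; letting $\ez\to 0$ in the displayed inequality yields \eqref{ds 2}. The main obstacle is the $\ez$-uniformity of $C(p)$ in Lemma \ref{sobolev of u}: a naive test of the PDE $-\Delta_{\infty} u^{\ez} - \ez \Delta u^{\ez} = 0$ against $(u^{\ez} - a)\zeta^q$ leaves $\ez$-dependent error terms coming from the $\ez\Delta u^{\ez}$ contribution, and it is only by routing through \eqref{identity} and the planar divergence structure of the Monge--Amp\`ere operator that one obtains a bound independent of $\ez$. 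Once that uniform bound is secured, the compactness step closing \eqref{ds 2} is essentially automatic.
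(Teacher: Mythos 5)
Your high-level plan — approximate by $u^\ez$, derive an estimate at the $\ez$-level from the determinant identity, and pass to the limit using Theorems \ref{strong converge} and \ref{es2011} — is the paper's plan, and the reliance on Lemma \ref{sobolev of u} is correct. But there are two substantive errors in your description of how that lemma is obtained and what it gives.

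First, Lemma \ref{sobolev of u} is not the clean, $\ez$-uniform Caccioppoli bound you attribute to it. Its right-hand side carries two extra error contributions: one proportional to $8\kappa + \tilde C(\alpha)\ez$, and one proportional to $\ez$ weighted by $(|Du^\ez|^2+\kappa)^{\alpha-1}|u^\ez|^2$. The clean inequality $\|\,|Du|\,\|_{L^p(V)}\lesssim \dist(V,\partial U)^{-1}\|u-a\|_{L^p(U)}$ is obtained only for the \emph{limit} $u$, via a double limit: first $\ez\to 0$ at fixed $\kappa$ (using Theorem \ref{es2011} to see that the $\ez$-weighted error terms are bounded and hence vanish in that limit, and Theorem \ref{strong converge} to converge the remaining terms), and only then $\kappa\to 0$. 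The order matters: for $\alpha\in(0,1)$ the quantity $(|Du^\ez|^2+\kappa)^{\alpha-1}|u^\ez|^2$ is not controlled as $\kappa\to 0$, so one cannot first send $\kappa\to0$ at fixed $\ez$. Asserting that a bound independent of $\ez$ is available at the $u^\ez$ level misrepresents the mechanism.

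Second, the test function you propose — ``a cutoff-weighted power of $u^\ez - a$'' — is not the one that produces Lemma \ref{sobolev of u}; it is essentially the test function that produces the \emph{flatness} estimate of Lemma \ref{uniform 2}, where $\phi=(u^\ez-P)^2\xi^4$ and $-\det D^2u^\ez\ge 0$ is used merely to discard the left side. To get the $L^p$ bound for arbitrary $p>2$, the paper tests $\mathbb I_\ez$ against $\phi=(|Du^\ez|^2+\kappa)^{\alpha-1}|u^\ez|^2\xi^{2(\alpha+1)}$ with $p=2\alpha+2$. The gradient-dependent factor $(|Du^\ez|^2+\kappa)^{\alpha-1}$ is what tunes the homogeneity in $|Du^\ez|$ and permits the exponent $p$ to range over $(2,\infty)$; it also forces the $\kappa$-regularization (needed when $\alpha\ne 1$). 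Without that factor, a power of $u^\ez-a$ alone produces, after integrating by parts, a term of the wrong sign and the wrong homogeneity (a product of $|Du^\ez|^4$ with a power of $u^\ez-a$), and one cannot close a Caccioppoli inequality at exponent $p$. So the two choices of test function really lead to different lemmas; you have supplied the wrong one for the result you want.
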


As a consequence of Theorem~\ref{mainthm} and Theorem \ref{mainthm0}, we obtain the following $L^p$-Liouville  property with $p>2$; see Section 3 for the proof.
Below, for $p\ge 1$  a function $u\in L^p_\loc(\mathbb R^2)$ satisfies the $L^p$-vanishing condition if 
\begin{align}\label{lpgrowth}\liminf_{R\to\infty} \frac { 1 }{R  }\left(\frac { 1 }{R^2 } \int_{ B(0,\,R)} | u(x)|^p \, dx\right)^{1/p}= 0.\end{align}

\begin{cor}\label{liuvioulle}
  If an $\infty$-harmonic function $u\in C(\mathbb R^2)$  satisfies  the $L^p$-vanishing condition for some $p>2$, 
  then $u$ must be a constant function. 
 \end{cor}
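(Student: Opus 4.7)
The plan is to chain Theorem \ref{mainthm0}, which upgrades $L^p$-bounds on $u$ into $L^p$-bounds on $Du$, with the $W^{1,2}$-Caccioppoli estimate of Theorem \ref{mainthm} for $|Du|^\alpha$, choosing the exponent $\alpha=p/2$ so that the $L^p$-norm of $Du$ and the $L^2$-norm of $|Du|^\alpha$ are related by the clean identity $\||Du|^\alpha\|_{L^2}=\|Du\|_{L^p}^{p/2}$.

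From \eqref{lpgrowth} I first extract a sequence $R_k\to\infty$ with $R_k^{-1-2/p}\|u\|_{L^p(B(0,R_k))}\to 0$. Applying Theorem \ref{mainthm0} on $V=B(0,R_k/2)\Subset U=B(0,R_k)$ with $a=0$ gives
$$\|Du\|_{L^p(B(0,R_k/2))}\le\frac{2C(p)}{R_k}\|u\|_{L^p(B(0,R_k))}=o(R_k^{2/p}).$$
Setting $\alpha:=p/2>1$ this is equivalent to $\||Du|^\alpha\|_{L^2(B(0,R_k/2))}=o(R_k)$. Feeding this into Theorem \ref{mainthm} on $V=B(0,R_k/4)\Subset U=B(0,R_k/2)$ gives
$$\|D|Du|^\alpha\|_{L^2(B(0,R_k/4))}\le\frac{4C(\alpha)}{R_k}\cdot o(R_k)=o(1).$$
Every compact $K\subset\mathbb R^2$ lies in $B(0,R_k/4)$ for $k$ large enough, hence $\|D|Du|^\alpha\|_{L^2(K)}=0$ for all such $K$, which forces $|Du|^\alpha$ and therefore $|Du|$ to equal some non-negative constant $d$ almost everywhere on $\mathbb R^2$.

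It remains to rule out $d>0$. In that case, since $u$ is continuous and $|Du|\equiv d$ a.e.\ on the convex set $\mathbb R^2$, the function $u$ is globally Lipschitz with constant $d$, so $|u(x)|\le|u(0)|+d|x|$ and a direct computation yields
$$\liminf_{R\to\infty}\frac{1}{R}\left(\frac{1}{R^2}\int_{B(0,R)}|u|^p\,dx\right)^{1/p}\ge c_p\,d>0,$$
contradicting \eqref{lpgrowth}. Hence $d=0$, and as $\mathbb R^2$ is connected and $u\in C(\mathbb R^2)$, $u$ must be constant.

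The only delicate point is the matching of scales: the choice $\alpha=p/2$ is the unique one for which the $o(R_k^{2/p})$ gradient growth coming from Theorem \ref{mainthm0}, once raised to the power $p/2$ via $\|\cdot\|_{L^2}=\|\cdot\|_{L^p}^{p/2}$, becomes exactly $o(R_k)$, which is in turn absorbed by the $R_k^{-1}$ factor in the Caccioppoli estimate of Theorem \ref{mainthm}. If one instead tried to avoid Theorem \ref{mainthm} and argue directly from $W^{1,2}_\loc$-regularity of $|Du|$ via Poincar\'e--Wirtinger, the leftover $R_k^{2/p}$ factor could not be absorbed and the argument would stall.
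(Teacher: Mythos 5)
Your first four steps reproduce the paper's chain exactly: apply Theorem~\ref{mainthm0} to turn the $L^p$-vanishing of $u$ into $o(R_k^{2/p})$-growth of $\|Du\|_{L^p}$, convert to $\||Du|^{p/2}\|_{L^2}=o(R_k)$, feed that into the Caccioppoli bound of Theorem~\ref{mainthm}, and conclude $D|Du|^{p/2}\equiv0$ so that $|Du|$ is a nonnegative constant $d$. That part is correct and is essentially the paper's argument.

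The final step, however, has a genuine gap. From $|Du|\equiv d$ a.e.\ you correctly infer that $u$ is globally $d$-Lipschitz, so that $|u(x)|\le|u(0)|+d|x|$. But that is an \emph{upper} bound on $|u|$, and it cannot produce the \emph{lower} bound
$\liminf_{R\to\infty} R^{-1}\bigl(R^{-2}\int_{B(0,R)}|u|^p\,dx\bigr)^{1/p}\ge c_p\,d$
that you assert follows by ``a direct computation.'' Indeed a general Lipschitz function with $|Du|=d$ a.e.\ need not grow at all (think of the distance to a lattice in one variable): oscillation is perfectly compatible with $|Du|=d$ a.e. What you would actually need here is linear growth of $u$, which for $\infty$-harmonic functions does follow from the monotone-slope/comparison-with-cones property (or, in two dimensions, from Savin's $C^1$ result together with slope monotonicity), but you neither invoke that nor supply any other justification; the Lipschitz bound by itself proves nothing in the required direction. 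The paper avoids this issue entirely by applying Theorem~\ref{mainthm0} once more: since $|Du|\equiv d$ gives $\frac1R\||Du|^{p/2}\|_{L^2(B(0,2R))}\approx d^{p/2}$ up to an absolute constant, while the right-hand side of the chain is controlled by $R^{-1-p/2}\|u\|_{L^p(B(0,4R))}^{p/2}$, whose $\liminf$ vanishes by the $L^p$-vanishing hypothesis, one gets $d^{p/2}\le0$ immediately. You should replace your last paragraph with this cleaner argument, or else supply the missing linear-growth lemma for $\infty$-harmonic functions.
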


Note that the  $L^p$-vanishing condition in Corollary \ref{liuvioulle} is sharp in the sense that the planar $ \infty$-harmonic function $ u(x)=x_1$ satisfies 
$$\liminf_{R\to\infty} \frac { 1 }{R  }\left(\frac { 1 }{R^2 } \int_{ B(0,\,R)} | u(x)|^p \, dx\right)^{1/p}>0.$$
Recall that  if an $\infty$-harmonic function  $u\in C(\mathbb R^n)$ with   $n\ge2$  satisfying 
  $\lim_{x\to\infty}\frac{|u(x)|}{|x|}=0$ (that is, $L^\infty$-vanishing condition),  then $u$ must be a constant function  as proved by Crandall et al  \cite{ceg}.  
Savin \cite{s05} further proved that if an $\infty$-harmonic function  $u\in C(\mathbb R^2)$   satisfying 
  $\sup_{x\in \mathbb R^2}\frac{|u(x)|}{1+|x|}<\infty$, then $u$ must be a linear function; similar results  in higher dimension are still unknown. 

\begin{rem}
It would be interesting  to show that   Theorem \ref{mainthm0},  and hence Corollary \ref{liuvioulle}, holds  for  $ p\in[1,2]$.  
\end{rem}

Considering the duality relation between $p$-Laplacian and $q$-Laplacian when $1<p<\infty$ and $\frac1p+\frac1q=1$, 
an  interesting question  in this area is to understand  the possible dual relation between 1-Laplacian and $\infty$-Laplacian. 
We show that   $|Du|^2 $ satisfies a certain type of 1-Laplacian equation for $\infty$-harmonic function $u$ 
which is $C^2$ and does not have singular points; see Proposition~\ref{dual function}. 
For  Aronsson's function $w$, which is  singular on the axes, 
 there will be an extra term  appearing in the 1-Laplacian equation for $|Dw|^2$; see Lemma~\ref{dual example}.

Finally, we make some convention of notations.   We often write the 
constants as 
positive real numbers $C(\cdot)$ with
the parenthesis including all the parameters on which the constant depends; we just simply write $C$ if it is absolute and if there is no further explanation. The constant $C(\cdot)$ may
vary between appearances, even within a chain of inequalities. By $V\Subset U$ we mean that $V$ is a bounded domain of $U$
and $ \overline V\subset U$.

\section{Determinants of approximating functions}

 Suppose that $U\subset \mathbb R^2$ is a bounded domain.  We have the following general observation for the determinant of smooth functions. 
\begin{lem}\label{detlem}
For any smooth function $v$ in $U$ we have 
\begin{align}\label{det}
-\det D^2v %&= \frac 1 2 (|D^2v|^2 - (\Delta v)^2)\nonumber\\
&=-\frac12{\rm div}(\Delta v Dv-D^2vDv)=-\frac12(v_iv_j)_{ij}+\frac12(|Dv|^2)_{ii} \quad{\rm in}\ U,  
\end{align}
and 
\begin{align}\label{det2}
(-\det D^2v) |Dv|^2= |D^2v Dv|^2  -\Delta v\Delta_\infty v \quad{\rm in}\ U. 
\end{align}
\end{lem}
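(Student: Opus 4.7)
The plan is to prove both identities by direct computation, exploiting two elementary facts special to dimension two: the relation $(\Delta v)^2-|D^2v|^2=2\det D^2v$, and the Cayley--Hamilton identity for $2\times 2$ matrices.

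For the first identity \eqref{det}, I would start from the divergence formulation. Using the Einstein convention, expand
\begin{align*}
\mathrm{div}(\Delta v\,Dv) &= \partial_i(v_{jj}v_i) = v_{jji}v_i+(\Delta v)^2,\\
\mathrm{div}(D^2v\,Dv) &= \partial_i(v_{ij}v_j) = v_{iij}v_j+v_{ij}v_{ji} = (\Delta v)_j v_j+|D^2v|^2,
\end{align*}
so that $\mathrm{div}(\Delta v\,Dv-D^2v\,Dv)=(\Delta v)^2-|D^2v|^2$. The $2\times 2$ identity $(\Delta v)^2-|D^2v|^2=2\det D^2v$ then gives the first equality. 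A parallel expansion yields
\[
(v_iv_j)_{ij}=2v_j(\Delta v)_j+(\Delta v)^2+|D^2v|^2,\qquad (|Dv|^2)_{ii}=2v_j(\Delta v)_j+2|D^2v|^2,
\]
and subtracting $\tfrac12$ times the former from $\tfrac12$ times the latter again produces $\tfrac12(|D^2v|^2-(\Delta v)^2)=-\det D^2v$, giving the second equality of \eqref{det}.

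For the second identity \eqref{det2}, I would apply the Cayley--Hamilton theorem to the symmetric matrix $A=D^2v$, which in dimension two reads
\[
A^2-(\mathrm{tr}\,A)\,A+(\det A)\,I=0.
\]
Contracting this matrix identity with $Dv$ on the right and with $Dv^T$ on the left yields
\[
\langle Dv,(D^2v)^2Dv\rangle-\Delta v\,\langle Dv,D^2v\,Dv\rangle+\det D^2v\,|Dv|^2=0.
\]
Since $D^2v$ is symmetric, $\langle Dv,(D^2v)^2 Dv\rangle=|D^2v\,Dv|^2$, and by definition $\langle Dv,D^2v\,Dv\rangle=v_iv_jv_{ij}=\Delta_\infty v$. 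Rearranging gives \eqref{det2}.

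There is no real obstacle here, only bookkeeping: the only places where dimension matters are the formula $(\Delta v)^2-|D^2v|^2=2\det D^2v$ and the specific Cayley--Hamilton form above, both of which are genuinely two-dimensional. The rest is expansion of derivatives under the summation convention and the symmetry $v_{ij}=v_{ji}$.
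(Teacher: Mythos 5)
Your proof is correct. For \eqref{det} you take essentially the same route as the paper: expand the divergence (or the second-derivative form) under the summation convention, cancel the cubic terms $v_j(\Delta v)_j$, and invoke the two-dimensional identity $-\det D^2v=\tfrac12(|D^2v|^2-(\Delta v)^2)$. The paper runs the chain of equalities in the opposite direction, starting from the coordinate formula $-\det D^2v=(v_{12})^2-v_{11}v_{22}$, but the computation is the same.

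For \eqref{det2} your argument is genuinely cleaner. The paper verifies the identity by brute-force expansion of $(v_1v_{11}+v_2v_{12})^2+(v_1v_{21}+v_2v_{22})^2$ and regrouping, whereas you recognize it as the Cayley--Hamilton identity $A^2-(\mathrm{tr}\,A)A+(\det A)I=0$ for $A=D^2v$, contracted with $Dv$ on both sides, using $\langle Dv,A^2Dv\rangle=|A\,Dv|^2$ from symmetry of $A$ and $\langle Dv,A\,Dv\rangle=\Delta_\infty v$. This makes it transparent that \eqref{det2} is exactly the quadratic form of Cayley--Hamilton evaluated at the gradient, and makes the two-dimensionality of the statement structurally visible rather than emerging from bookkeeping. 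Both arguments are elementary and equivalent in content, but yours packages the second identity more conceptually.
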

\begin{proof}
By direct  calculation we have   
\begin{align}-\det D^2v&=(v_{12})^2-v_{11}v_{22}\nonumber\\
&=\frac12[v_{ij}v_{ij}-v_{ii}v_{jj}]  = \frac 1 2 (|D^2v|^2 - (\Delta v)^2)\nonumber\\
&=\frac12 (v_{ij}v_j-v_{jj}v_i)_i =-\frac12{\rm div}(\Delta v Dv-D^2vDv)\nonumber\\
&=\frac12(v_jv_j)_{ii}-\frac12(v_iv_j)_{ij}  =-\frac12(v_iv_j)_{ij}+\frac12(|Dv|^2)_{ii}\nonumber,
\end{align} 
which gives \eqref{det}.
 
By direct  calculation we have 
\begin{align*}
 |D^2v Dv|^2 
&= (v_1v_{11}+v_2v_{12})^2+ (v_1v_ {21}+v_2v_{22})^2\\
&=v_{11} ((v_1)^2v_{11}+ 2v_1v_2 v_{12})+ v_{22} ((v_2)^2v_{22}+ 2v_1v_2 v_{12})+  (v_{12})^2((v_1)^2+(v_2)^2)\\
&=(v_{11}+v_{22} ) \Delta_\infty v-  v_{11}v_{22}((v_2)^2+(v_1)^2)+(v_{12})^2((v_1)^2+(v_2)^2)\\
&=\Delta v\Delta_\infty v +(-\det D^2v) |Dv|^2,
\end{align*}
 which gives \eqref{det2}. 
\end{proof}

\begin{rem}\label{detrem}
Given a function $v\in W^{1,2}_\loc(U)$, the distributional determinant $\det D^2v$ of $v$ is well defined as  given by 
$$\int_U-\det D^2v\phi\,dx=\frac12\int_U [-v_iv_j\phi_{ij}+|Dv|^2\phi_{ii}]\,dx\quad\forall \phi\in C_c^2(U).$$
\end{rem}

In the sequel of this section, for each $\epsilon >0$ we let $u^\epsilon\in C^\infty(U)$ be a solution to
\begin{equation}\label{infty ez}
-\Delta_{\infty} u^\epsilon -\ez \Delta u^ \epsilon=0 \quad \text{in $U$}.  
 \end{equation}
% It is known that  $u^\epsilon $ also satisfies
%$$ -{\rm div }\left(\exp\left(\frac 1{2\epsilon}|Du^{\ez}|^2\right) Du^{\ez}\right )=0  \quad {\rm in} \  U,$$
% which  is the Euler-Lagrange equation for minimizing the exponential functional 
%$$\mathbb F_\epsilon (v) =\int_{U}\exp\left(\frac1{2\epsilon}|Dv|^2\right)\,dx.$$
%See \cite{e03,ey04} for more details. 
As a consequence of   Lemma \ref{detlem}, we have the following results. 
\begin{lem}\label{I} For each $\epsilon>0$ we have 
\begin{equation}\label{key inequalityII}(-\det D^2u^\ez) |Du^\ez|^2=  |D^2u^\ez Du^\ez|^2 + \ez(\Delta u^\ez)^2 \quad {\rm in}\ U.\end{equation}
Moreover, $ -\det D^2u^{\ez}\ge 0 \ {\rm in}\ U$ and 
 \begin{equation}\label{key inequalityIII} -\det D^2u^{\ez}= |D|Du^\ez||^2+\ez\frac{(\Delta u^\ez)^2 }{|Du^\ez|^2}  \quad {\rm almost\ everywhere\ in}\ U.\end{equation}
\end{lem}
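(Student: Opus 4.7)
The plan is to read the lemma off from the general determinant identity \eqref{det2} applied to the smooth function $v=u^\ez$, combined with the regularized equation \eqref{infty ez} and the chain-rule relation linking $D|Du^\ez|$ with $D^2u^\ez\,Du^\ez$.

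For \eqref{key inequalityII}, I would substitute $v=u^\ez$ into \eqref{det2} of Lemma \ref{detlem} to obtain
\[
(-\det D^2u^\ez)|Du^\ez|^2 = |D^2u^\ez\,Du^\ez|^2 - \Delta u^\ez\,\Delta_\infty u^\ez,
\]
and then use $\Delta_\infty u^\ez=-\ez\Delta u^\ez$ from \eqref{infty ez} to rewrite the last term as $\ez(\Delta u^\ez)^2$. This immediately gives \eqref{key inequalityII} pointwise in $U$.

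For the non-negativity of $-\det D^2u^\ez$, set $\boz_\ez:=\{x\in U:|Du^\ez(x)|>0\}$, which is open. On $\boz_\ez$ the right-hand side of \eqref{key inequalityII} is a sum of squares, so $-\det D^2u^\ez\ge 0$ there. On the interior of the complement $u^\ez$ is locally constant, hence $D^2u^\ez\equiv 0$ and $-\det D^2u^\ez=0$. Points on $\partial\boz_\ez$ are handled by continuity of the smooth function $\det D^2u^\ez$. These three regions cover $U$.

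For \eqref{key inequalityIII}, on $\boz_\ez$ the chain rule gives
\[
D|Du^\ez|=\frac{D(|Du^\ez|^2)}{2|Du^\ez|}=\frac{D^2u^\ez\,Du^\ez}{|Du^\ez|},
\]
so $|D|Du^\ez||^2 = |D^2u^\ez\,Du^\ez|^2/|Du^\ez|^2$, and dividing \eqref{key inequalityII} by $|Du^\ez|^2$ on $\boz_\ez$ yields the identity pointwise there. The only remaining point, and the main obstacle, is to verify that the critical set $\{Du^\ez=0\}$ is Lebesgue-negligible so that the a.e.\ statement actually holds on all of $U$. I would split this set into its interior---on which $u^\ez$ is locally constant and both sides of \eqref{key inequalityIII} vanish (adopting the natural convention $0/0=0$ for the last term, which is consistent since $\Delta u^\ez=0$ there by \eqref{infty ez})---and its topological boundary; for non-constant smooth solutions of the quasilinear uniformly elliptic regularization \eqref{infty ez} in two dimensions, standard critical-set results (going back to Hartman--Wintner) ensure that the latter consists of isolated points and hence has planar measure zero, which closes the argument.
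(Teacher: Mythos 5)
Your derivation of \eqref{key inequalityII} from \eqref{det2} and the regularized equation, and your proof of $-\det D^2u^\ez\ge 0$ by splitting into $\{Du^\ez\neq 0\}$, the interior of $\{Du^\ez=0\}$, and its boundary, coincide in substance with the paper's argument. The genuine divergence is in your treatment of \eqref{key inequalityIII}. You verify the identity on the open set $\{Du^\ez\neq 0\}$ by the chain rule, note that both sides vanish on the interior of $\{Du^\ez=0\}$, and then dispose of the remaining set by invoking Hartman--Wintner/Bers-type structure theory for critical sets of solutions of planar uniformly elliptic equations (together with unique continuation to rule out a positive-measure critical set when $u^\ez$ is non-constant). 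This is a valid route: since $u^\ez$ solves $(u^\ez_iu^\ez_j+\ez\delta_{ij})u^\ez_{ij}=0$ with smooth coefficients and lower ellipticity bound $\ez$, the complex gradient $u^\ez_1-iu^\ez_2$ is quasiregular, hence has a discrete zero set unless $u^\ez$ is constant, so the critical set is Lebesgue-null. The paper, however, never needs to know that the critical set is null: it observes that $|Du^\ez|\in\lip_\loc(U)$, hence is differentiable a.e.\ by Rademacher, and verifies \eqref{key inequalityIII} directly at every differentiability point $z$, including those with $Du^\ez(z)=0$. At such a point, Taylor expansions of $|Du^\ez|$ and $Du^\ez$ give $|D|Du^\ez|(z)|^2=|D^2u^\ez(z)\,D|Du^\ez|(z)|$; then $\Delta u^\ez(z)=0$ (forced by $Du^\ez(z)=0$ and the equation) yields $u^\ez_{11}(z)=-u^\ez_{22}(z)$, so $|D|Du^\ez|(z)|^2=(u^\ez_{11})^2+(u^\ez_{12})^2=-\det D^2u^\ez(z)$. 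The paper's route is self-contained and elementary; yours buys a shorter write-up at the cost of invoking a classical but non-trivial external theorem that the lemma in fact does not require.
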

\begin{rem}\label{remDelta0}
 If $Du^\ez(z)=0$ at $z\in U$, we then have  
$\ez\Delta u^\ez (z)=- \Delta_\infty u^\ez(z)=0$. 
 For this reason, we define 
$$ \frac{ \Delta u^\ez(z)  }{|Du^\ez(z)|^\beta}=0 $$  
for any $0<\beta<2$.
In particular, the last term in \eqref{key inequalityIII} is well defined in $U$. 
\end{rem}

\begin{proof} 
By  \eqref{det2} and $\Delta_\infty u^\ez=-\ez \Delta u^\ez$, we have 
$$  |D^2u^\ez Du^\ez|^2 =- \ez(\Delta u^\ez)^2+ (-\det D^2u^\ez) |Du^\ez|^2$$
 which gives  \eqref{key inequalityII}. 

Now we use \eqref{key inequalityII}  to show $-\det D^2u^{\ez}\ge 0$ in $U$. 
Let $\bar x\in U$ be an arbitrary fixed point.  If $|Du^\epsilon|(\bar x)\ne 0$, then by   \eqref{key inequalityII} we have $-\det D^2u^\ez(\bar x)\ge 0$. 
Assume that $Du^{\ez}(\bar x)=0$. If there exist  $\bar x^{(k)}$ such that $Du^{\ez}(\bar x^{(k)})\ne 0$ and $\bar x^{(k)}\to \bar x$ as $k\to\infty$, then by continuity, 
 $$-\det D^2u^{\ez}(\bar x)=\lim_{k\to\infty} -\det D^2u^\epsilon (\bar x^{(k)})\ge0.$$
 Otherwise, there exists  some sufficiently small $r>0$ such that $Du^{\ez}(  x)=0$ for all $x\in B(\bar x,r)$, and hence  $u$ is a constant function in  $B(\bar x,r)$, which implies that 
 $-\det D^2u^\epsilon (\bar x)=0$.

Finally,  \eqref{key inequalityII} also implies \eqref{key inequalityIII}. 
Indeed, notice that $|Du^\epsilon|\in \lip_\loc(U)$, hence $|Du^\epsilon|$ is differentiable almost everywhere.
Assume that $|Du^\epsilon|$ is differentiable at $z\in U$.
If $ Du^\ez (z) \ne 0$, then 
$$|D|Du^\epsilon|(z)|^2=\frac{|D^2u^\ez Du^\ez(z)|}{|Du^\ez|^2}= -\det D^2u^\ez-\ez\frac{(\Delta u^\ez)^2 }{|Du^\ez|^2}.$$
Assume now $ Du^\ez (z)= 0$. Considering Remark \ref{remDelta0}, 
we are required to show 
$$|D|Du^\ez|(z)|^2 =-\det D^2u^\ez(z).$$
Since $|Du^\epsilon|$ and $Du^\ez$  are differentiable at $z$, applying Taylor's expansion, we write 
$$|Du^\epsilon(x)|= \langle D |Du^\epsilon|(z),x-z\rangle +o(|x-z|)\quad\forall x$$
and $$ Du^\epsilon(x) =  D^2u^\epsilon (z)( x-z)  +o(|x-z|)\quad\forall x.$$
If  $D |Du^\epsilon|(z)\ne0$, 
pluging $x =z+t D|Du^\ez|(z)$ in both formula   and letting $t\to 0$, we obtain 
$$|D |Du^\epsilon|(z)|^2=  |D^2u^\epsilon (z)D|Du^\ez|(z)|.$$
Assuming $D|Du^\ez|(z)=|D|D  u^\ez|(z)| {\bf e}_1$ without loss of generality, we have 
$$|D|Du^\ez|(z)|^2=|D^2u^\ez {\bf e}_1|^2=(u^\ez_{11})^2+(u^\ez_{12})^2.$$
Since $\Delta u^\ez(z)=-\frac1\ez\Delta_\infty u^\ez(z)=0$, we obtain $u^\ez_{11} =-u^\ez_{22}$, which yields that 
$$|D|Du^\ez|(z)|^2 =-u^\ez_{11}u^\ez_{22} +(u^\ez_{12})^2=-\det D^2u^\ez(z).$$
If  $D |Du^\epsilon|(z)=0$, then $$ o(|x-z|)=|Du^\epsilon(x)|=|D^2u^\ez(z)(x-z)|+o(|x-z|)\quad\forall x.$$ 
Hence $D^2u^\ez(z)=0$, and 
$$|D|Du^\epsilon|(z)|^2=0  = -\det D^2u^\ez(z).$$
This completes the proof of Lemma \ref{I}. 
\end{proof}

Associated to such $u^\epsilon$, we introduce a functional $\mathbb I_\epsilon$ on  $ C_c (U)$ defined by 
\begin{align}
\mathbb I_\epsilon(\phi)   
&= \int_{U} -\det D^2u^\ez \phi\, dx\quad\forall \phi\in C_c(U) \label{identityII} 
\end{align} 
By  \eqref{key inequalityIII} we  write 
\begin{align}
\mathbb I_\epsilon(\phi)   
&= \int_{U}  |D|Du^\epsilon|(z)|^2 \phi \, dx +\ez\int_{U}  \frac{(\Delta u^\ez)^2}{|Du^\ez|^2} \phi \, dx\quad \forall  \phi\in  C_c (U). \label{identityIII} 
\end{align} 
By \eqref{det} and integration by parts,  we further write  
\begin{align}\label{functional}\mathbb I_\epsilon (\phi) &= \frac12\int_{U} [\Delta u^\ez u^\ez_i\phi_i  - u^\ez_{ij} u^\ez_j \phi_i]\,dx 
\quad \forall  \phi\in W^{1,\,2}_c (U)\\
&= \frac12  \int_{U}[-u^\ez_iu^\ez_j\phi_{ij}+ |Du^\epsilon|^2 \phi_{ii }] \,dx  \quad \forall  \phi\in C^2_c (U). \label{ds3}\end{align}

As a consequence of   \eqref{key inequalityIII} and   \eqref{ds3}, we have the following apriori estimates, which is uniform in $\epsilon>0$.  
\begin{cor}\label{I2}
We have \begin{align*}  
 \int_{V}  |D|Du^\epsilon|(z)|^2 \, dx +\ez\int_{V}  \frac{(\Delta u^\ez)^2}{|Du^\ez|^2}  \, dx&= \int_{V} -\det D^2u^\ez  \, dx\\
&\le \frac{8}{[\dist (V,\partial W)]^2}\int_W |Du^\ez|^2 \,dx\quad \forall  V\Subset W\Subset U .
\end{align*}
\end{cor}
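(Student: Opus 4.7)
The plan is to treat Corollary~\ref{I2} as a Caccioppoli-type estimate for the functional $\mathbb{I}_\ez$, using the integration-by-parts representation \eqref{ds3} paired with a standard cutoff. The stated equality is cost-free: integrating the pointwise identity \eqref{key inequalityIII} of Lemma~\ref{I} over $V$ gives it directly, with Remark~\ref{remDelta0} handling the last integrand at points where $Du^\ez=0$. So all of the real work lies in bounding $\int_V-\det D^2u^\ez\,dx$ from above by $|Du^\ez|^2$-data on $W$.

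My strategy is as follows. Fix $V\Subset W\Subset U$, set $d:=\dist(V,\partial W)>0$, and choose $\phi\in C^2_c(W)$ with $\phi\equiv 1$ on $V$, $0\le\phi\le 1$, and $\sup_U\|D^2\phi\|_{\mathrm{op}}\le 16/d^2$; a mollified distance-function cutoff realizes this. Since $-\det D^2u^\ez\ge 0$ by Lemma~\ref{I} and $\phi\ge\chi_V$, I first monotonically pass to $\mathbb{I}_\ez(\phi)$:
$$\int_V-\det D^2u^\ez\,dx\ \le\ \int_U-\det D^2u^\ez\,\phi\,dx\ =\ \mathbb{I}_\ez(\phi).$$

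The next step is to apply \eqref{ds3} to this $\phi$ and exploit the algebraic fact that the symmetric matrix $|Du^\ez|^2\delta_{ij}-u^\ez_iu^\ez_j$ has eigenvalues $0$ and $|Du^\ez|^2$, so in the positive-semidefinite order
$$0\ \le\ |Du^\ez|^2\delta_{ij}-u^\ez_iu^\ez_j\ \le\ |Du^\ez|^2\delta_{ij}.$$
Rewriting \eqref{ds3} as $\mathbb{I}_\ez(\phi)=\tfrac12\int_U(|Du^\ez|^2\delta_{ij}-u^\ez_iu^\ez_j)\phi_{ij}\,dx$ and bounding the integrand pointwise by $|Du^\ez|^2\|D^2\phi\|_{\mathrm{op}}$ then yields the inequality, with the explicit constant $8$ arising from the factor $1/2$ in \eqref{ds3} combined with the bound $16/d^2$ on $\|D^2\phi\|_{\mathrm{op}}$ and the fact that $\mathrm{supp}\,\phi\subset W$.

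I do not anticipate a conceptual obstacle: all the PDE input---the nonnegativity of $-\det D^2u^\ez$ uniformly in $\ez$ and the divergence-type identity behind \eqref{ds3}---is already supplied by Lemma~\ref{I} and the derivation preceding \eqref{ds3}. The only point requiring care is bookkeeping the sharp constant $8$, which is entirely a matter of pairing the chosen operator-norm bound on $D^2\phi$ with the factor $1/2$; this is quantitative rather than conceptual, and is handled by committing to a concrete cutoff at the outset.
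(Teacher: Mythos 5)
Your proof is correct and coincides with the argument the paper leaves implicit: the equality is the integral of \eqref{key inequalityIII} over $V$ (with Remark~\ref{remDelta0} for the set $\{Du^\ez=0\}$), and the upper bound comes from testing \eqref{ds3} with a cutoff $\phi\ge\chi_V$ supported in $W$, using $-\det D^2u^\ez\ge 0$ from Lemma~\ref{I} and the observation that in two dimensions $|Du^\ez|^2\delta_{ij}-u^\ez_iu^\ez_j$ is the rank-one positive-semidefinite matrix $Du^{\ez,\perp}\otimes Du^{\ez,\perp}$ with trace $|Du^\ez|^2$. The only soft spot is the claim that a mollified-distance cutoff for arbitrary $V\Subset W$ achieves $\sup\|D^2\phi\|_{\mathrm{op}}\le 16/[\dist(V,\partial W)]^2$ exactly; this is a constant-bookkeeping issue the paper itself does not address (it simply assumes $|D^2\xi|\lesssim C/d^2$ elsewhere), so it does not affect the validity of the estimate up to an absolute constant, which is all that is used downstream.
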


Moreover,   by testing  $\phi= (|Du^{\ez}| ^2+\kappa)^{  \alpha-1  } \xi^4$  in   \eqref{functional}  for some suitable cut-off functions $\xi\in C_c^\infty(U)$ and $\kappa>0$,  
 applying \eqref{key inequalityIII} we have the following  estimates of $|Du^\epsilon|^\alpha$ for all $\alpha>0$, which are uniform in $\epsilon$. 
We postpone the details of the proof to Section 5. 

\begin{lem}\label{uniform 1}
 For $\alpha>0$,  $\epsilon\in(0,1]$ and $V\Subset W\Subset U$, we have 
\begin{align*}
 \int_V |D|Du^\epsilon|^\alpha|^2\,dx  +\epsilon   \int_V |Du^\epsilon|^{2\alpha-4} (\Delta u^\epsilon)^2\,dx   \le C (\alpha)\frac1{[\dist (V,\partial W)]^2} 
\int_W|Du^\epsilon| ^{2\alpha}\,dx    .
\end{align*}
%In particular,  $|D|Du^\epsilon|^\alpha|^2=0$ almost everywhere in $\{x\in U:\ Du^\ez(x)=0\}$. ?????????
\end{lem}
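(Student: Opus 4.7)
The plan is to test the identity \eqref{functional} with $\phi=(|Du^\ez|^2+\kappa)^{\alpha-1}\xi^4$, where $\kappa>0$ and $\xi\in C_c^\infty(U)$ is a standard cut-off satisfying $\xi\equiv1$ on $V$, $\xi\equiv0$ outside $W$, $|D\xi|\le C/\dist(V,\partial W)$, and $|D^2\xi|\le C/\dist(V,\partial W)^2$, and then to pass $\kappa\to 0$. Writing $v=u^\ez$ and $g=|Dv|^2+\kappa$ for brevity, a direct computation gives $\phi_i=2(\alpha-1)g^{\alpha-2}v_kv_{ki}\xi^4+4g^{\alpha-1}\xi^3\xi_i$. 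Substituting into \eqref{functional}, the two pieces of $\phi_i$ contribute respectively $-2(\alpha-1)g^{\alpha-2}\xi^4[|D|Dv||^2|Dv|^2+\ez(\Delta v)^2]$ (after using $\Delta_\infty v=-\ez\Delta v$ together with \eqref{key inequalityII} and \eqref{key inequalityIII}) and $4g^{\alpha-1}\xi^3\,P\cdot D\xi$, where
\begin{align*}
P := \Delta v\cdot Dv - D^2v\,Dv
\end{align*}
is the planar cofactor-type vector field associated to $D^2v$. Rearrangement yields the core identity
\begin{align*}
\int_U g^{\alpha-2}\xi^4\Bigl[|D|Dv||^2(\alpha|Dv|^2+\kappa)+\ez(\Delta v)^2(\alpha+\kappa/|Dv|^2)\Bigr]\,dx = 2\int_U g^{\alpha-1}\xi^3\,P\cdot D\xi\,dx.
\end{align*}

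Next I will bound the right-hand side. Decomposing $P\cdot D\xi=\Delta v(Dv\cdot D\xi)-(D^2v\,Dv)\cdot D\xi$, the contribution of $(D^2v\,Dv)\cdot D\xi$ is handled via the a.e.\ identity $D^2v\,Dv=|Dv|D|Dv|$ and Young's inequality, absorbing a small portion into the first term on the left. For the contribution $J:=\int_U g^{\alpha-1}\xi^3\Delta v(Dv\cdot D\xi)\,dx$, I integrate by parts using $\Delta v=\partial_iv_i$; expanding $Dv\cdot D[g^{\alpha-1}\xi^3(Dv\cdot D\xi)]$ and using the simplification $Dv\cdot Dg=2\Delta_\infty v=-2\ez\Delta v$, I obtain
\begin{align*}
J = 2(\alpha-1)\ez\int_U g^{\alpha-2}\xi^3\Delta v(Dv\cdot D\xi)\,dx - 3\int_U g^{\alpha-1}\xi^2(Dv\cdot D\xi)^2\,dx - \int_U g^{\alpha-1}\xi^3\bigl[(D^2v\,Dv)\cdot D\xi + v_iv_j\xi_{ij}\bigr]\,dx.
\end{align*}
The last three integrals are either directly absorbable into the left-hand side or bounded by $C(\alpha)\dist(V,\partial W)^{-2}\int_W g^\alpha\,dx$, using Young's inequality, $|Dv|^2\le g$, and the bound on $|D^2\xi|$.

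The main obstacle is the residual term $2(\alpha-1)\ez\int g^{\alpha-2}\xi^3\Delta v(Dv\cdot D\xi)\,dx$, since $|\Delta v|$ is not directly dominated by either quantity on the left-hand side. The key tool is the planar estimate
\begin{align*}
\ez|\Delta v| = |\Delta_\infty v| = |\langle D^2v\,Dv,Dv\rangle| \le |Dv|^2|D|Dv||\quad\text{a.e.},
\end{align*}
a consequence of the a.e.\ identity $|D^2v\,Dv|=|Dv||D|Dv||$ and Cauchy-Schwarz, which gives $\ez|\Delta v||Dv|\le|Dv|^3|D|Dv||$ almost everywhere. A weighted Young's inequality splitting as $[g^{(\alpha-2)/2}|Dv||D|Dv||\xi^2][g^{(\alpha-2)/2}|Dv|^2|D\xi|\xi]$, combined with $|Dv|^4 g^{\alpha-2}\le g^\alpha$, then yields
\begin{align*}
\int_U g^{\alpha-2}\xi^3|Dv|^3|D|Dv|||D\xi|\,dx \le \delta\int_U g^{\alpha-2}|Dv|^2|D|Dv||^2\xi^4\,dx + \frac{C(\alpha)}{\delta}\int_U g^\alpha|D\xi|^2\xi^2\,dx.
\end{align*}
Choosing $\delta$ small depending on $\alpha$ to absorb the first term on the right into the left-hand side, and using $|D\xi|\le C\dist(V,\partial W)^{-1}$, I obtain
\begin{align*}
\alpha\int_U g^{\alpha-2}|Dv|^2|D|Dv||^2\xi^4\,dx + \alpha\ez\int_U g^{\alpha-2}(\Delta v)^2\xi^4\,dx \le \frac{C(\alpha)}{\dist(V,\partial W)^2}\int_W g^\alpha\,dx.
\end{align*}
Finally, passing $\kappa\to 0$ via Fatou's lemma together with the pointwise convergences $g^{\alpha-2}|Dv|^2\to|Dv|^{2\alpha-2}$ and $g^\alpha\to|Dv|^{2\alpha}$ (noting that $(\Delta v)^2$ vanishes at zeros of $Dv$ via $\ez\Delta v=-\Delta_\infty v$), and invoking the a.e.\ identity $|D|Dv|^\alpha|^2=\alpha^2|Dv|^{2\alpha-2}|D|Dv||^2$, concludes the proof.
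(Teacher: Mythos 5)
Your proposal is correct and follows essentially the same strategy as the paper's proof of Lemma~\ref{cacciopoli ez}: test \eqref{functional} with $\phi=(|Du^\ez|^2+\kappa)^{\alpha-1}\xi^k$, use \eqref{key inequalityIII} for the other expression of $\mathbb I_\ez(\phi)$, integrate by parts the $\Delta u^\ez\, u^\ez_i\xi_i$ term, absorb the second-order error terms by Young's inequality, and let $\kappa\to0$. The only cosmetic differences are the power of $\xi$ ($\xi^4$ vs.\ the paper's $\xi^2$) and that you substitute $\Delta_\infty u^\ez=-\ez\Delta u^\ez$ before bounding the residual $(\alpha-1)$-term and invoke $\ez|\Delta u^\ez|\le|Du^\ez|^2|D|Du^\ez||$, whereas the paper bounds $|u^\ez_i\xi_i\, u^\ez_{jk}u^\ez_ju^\ez_k|\le|D^2u^\ez Du^\ez||Du^\ez|^2|D\xi|$ directly; these are the same estimate since $|D^2u^\ez Du^\ez|=|Du^\ez|\,|D|Du^\ez||$ a.e.
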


By testing  $\phi= (u^\epsilon - P)^2 \xi^4$   in   \eqref{functional}  for some suitable cut-off functions $\xi\in C_c^\infty(U)$ and any  linear function  $P$,  
 applying $-\det D^2u^\ez\ge 0$ in $U$ given by Lemma \ref{I} and Lemma \ref{uniform 1} we 
have the following uniform integral flatness.   The detailed proof  is   postpone to Section 5. 
 \begin{lem}\label{uniform 2}
For  any $ \bar x\in U$, $0<r<\dist(\bar x,\partial U)/4$ and linear function $P$, we have  
\begin{align*}
& \bint_{B(\bar x,r)} (|Du ^{\ez}|^2- \langle DP,Du^\epsilon\rangle  )^2 \,dx\\
&\quad \le C\left[\bint_{B(\bar x,2r)}|Du^\epsilon|^4\,dx\right]^{1/2} 
  \left[\bint_{B(\bar x,2r)} \left (\frac{|u^\epsilon-P |^2}{r^2}(|DP|+|Du^\epsilon|)^2   +    \frac{|u^\epsilon-P |^4}{r^4}\right)    \,dx \right]^{1/2} 
\end{align*}
\end{lem}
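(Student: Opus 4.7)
Set $v := u^\ez - P$ and $A := |Du^\ez|^2 - \langle DP, Du^\ez \rangle = u^\ez_i v_i$. Since $P$ is affine, one has the pointwise identity $A = \partial_i(v u^\ez_i) - v \Delta u^\ez$. The plan is to test this identity against $A \xi^4$ to obtain a representation for $\int A^2 \xi^4$, then use the nonnegativity $\mathbb I_\ez(v^2 \xi^4) \ge 0$ supplied by Lemma \ref{I} to kill the single term carrying an uncontrollable $\Delta u^\ez$ factor, and finally clean up via further integration by parts combined with Cauchy--Schwarz and Lemma \ref{uniform 1}.

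Integrating by parts and using $u^\ez_j u^\ez_{ij} = \tfrac12(|Du^\ez|^2)_i$ together with $\Delta_\infty u^\ez = -\ez \Delta u^\ez$ yields
\[
\int A^2 \xi^4\,dx = T_1 + T_2 + T_3 + T_4,
\]
where $T_1 = 2\ez \int v \Delta u^\ez \xi^4$, $T_2 = \tfrac12 \int v \langle DP, D|Du^\ez|^2 \rangle \xi^4$, $T_3 = -4 \int v A \langle Du^\ez, D\xi \rangle \xi^3$, and the problematic term $T_4 = -\int v A \Delta u^\ez \xi^4$. Separately expanding $\mathbb I_\ez(v^2 \xi^4)$ through \eqref{functional} with the same two simplifications produces
\[
2 \mathbb I_\ez(v^2 \xi^4) = -2 T_4 + T_1 + 2 T_2 + 4 \int v^2 \Delta u^\ez \langle Du^\ez, D\xi \rangle \xi^3 - 2 \int v^2 \langle D|Du^\ez|^2, D\xi \rangle \xi^3.
\]
Since Lemma \ref{I} gives $\mathbb I_\ez(v^2 \xi^4) = \int (-\det D^2 u^\ez) v^2 \xi^4 \ge 0$, this upper-bounds $T_4$; plugging back eliminates $T_4$ and leaves
\[
\int A^2 \xi^4 \le \tfrac32 T_1 + 2 T_2 + T_3 + 2 \int v^2 \Delta u^\ez \langle Du^\ez, D\xi \rangle \xi^3 - \int v^2 \langle D|Du^\ez|^2, D\xi \rangle \xi^3.
\]

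One further integration by parts (moving one $\partial_k$ off $u^\ez_{kk}$ in the remaining $v^2 \Delta u^\ez$ integral, and $\partial_k$ off $|Du^\ez|^2$ in every surviving $\langle D|Du^\ez|^2, D\xi \rangle$ integral) removes $\Delta u^\ez$ altogether and leaves every integrand with total degree four in $|Du^\ez|$ and $|D|Du^\ez||$, weighted by combinations of $v$, $1/r$, $|DP|$, $|D\xi|$ and $|D^2\xi|$. Each piece is then estimated by Cauchy--Schwarz so that one factor integrates to $(\int |Du^\ez|^4)^{1/2}$ and the other to $(\int [v^2/r^2 (|DP| + |Du^\ez|)^2 + v^4/r^4])^{1/2}$. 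The decisive input is Lemma \ref{uniform 1} at $\alpha = 2$, which through $|D|Du^\ez|^2|^2 = 4 |Du^\ez|^2 |D|Du^\ez||^2$ yields
\[
\int |Du^\ez|^2 |D|Du^\ez||^2 \xi^4\,dx \le \frac{C}{r^2} \int_{B(\bar x, 2r)} |Du^\ez|^4\,dx;
\]
for $T_1$ this is combined with the pointwise bound $|\ez \Delta u^\ez| = |\Delta_\infty u^\ez| \le |Du^\ez|^2 |D|Du^\ez||$. The $T_3$-type contributions (including an extra copy produced when integrating by parts on $v^2 \Delta u^\ez \langle Du^\ez, D\xi \rangle$) are handled by Young's inequality, picking up a term $\delta \int A^2 \xi^4$ which is absorbed into the left-hand side. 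A standard cutoff $\xi \in C_c^\infty(B(\bar x, 2r))$ with $\xi \equiv 1$ on $B(\bar x, r)$, $|D\xi| \lesssim 1/r$, $|D^2 \xi| \lesssim 1/r^2$ completes the proof.

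The main obstacle is precisely $T_4$: the factor $\Delta u^\ez$ has no compensating $\ez$ that would render it uniformly $L^2$-bounded via Lemma \ref{uniform 1}, so any direct Cauchy--Schwarz attempt incurs a factor $\ez^{-1/2}$ and degenerates as $\ez \to 0$. The nonnegativity $\mathbb I_\ez(v^2 \xi^4) \ge 0$ coming from $-\det D^2 u^\ez \ge 0$ is the decisive extra ingredient: it allows $T_4$ to be traded for expressions from which $\Delta u^\ez$ can be eliminated by one final integration by parts, reducing everything to quantities controlled by the $\alpha = 2$ case of Lemma \ref{uniform 1}.
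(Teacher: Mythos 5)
Your proposal is correct and uses the same core ingredients as the paper's proof of this lemma (which appears as Lemma \ref{flat} plus a cutoff argument): test $\phi = (u^\ez-P)^2\xi^k$ in $\mathbb I_\ez$, exploit $\mathbb I_\ez(\phi)\ge 0$ from Lemma \ref{I} to control the term carrying the unweighted $\Delta u^\ez$, use $\Delta_\infty u^\ez = -\ez\Delta u^\ez$ to reduce everything to $|D^2u^\ez Du^\ez|$ plus lower-order pieces, and close with Cauchy--Schwarz and the $\alpha=2$ Caccioppoli estimate from Lemma \ref{cacciopoli ez}/Lemma \ref{uniform 1}. The only difference is bookkeeping: the paper integrates by parts the first term of the $\mathbb I_\ez(\phi)\ge 0$ inequality, which directly produces $-\int A^2\xi^2$ on the right-hand side, whereas you first write a separate identity $\int A^2\xi^4 = T_1+\dots+T_4$ and then invoke $\mathbb I_\ez\ge 0$ to eliminate $T_4$; these are the same integration by parts carried out in opposite directions, so the two arguments are algebraically equivalent. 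One small technical remark: to bound $\int |Du^\ez|^2|D|Du^\ez||^2\xi^4$ by $Cr^{-2}\int_{B(\bar x,2r)}|Du^\ez|^4$ you should either apply Lemma \ref{cacciopoli ez} directly with the cutoff $\xi^2$ (so the right-hand side stays supported in $B(\bar x,2r)$) or shrink the support of $\xi$ slightly inside $B(\bar x,2r)$ before invoking Lemma \ref{uniform 1}, since the latter needs strict compact inclusion $V\Subset W$; this is routine but worth making explicit.
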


Finally, by testing $\phi= (|Du^{\ez}| ^2+\kappa)^{  \alpha-1} |u^\ez|^2  \xi^{2(\alpha+1)}$  in   \eqref{functional}  
for some suitable cut-off functions $\xi\in C_c^\infty(U)$ and $\kappa>0$, 
 applying \eqref{key inequalityIII}, we obtain the following estimates.  The detailed proof  is   postpone to Section 5. 

\begin{lem}\label{sobolev of u} 
For any $\alpha> 0$   and $\kappa >0$, we have
 \begin{align*} 
&  \int_{V} ( |Du^{\ez}|  ^2+\kappa)^{  \alpha+1 }   \, dx\\
&\quad\le  
  C (\alpha ) \frac1{[\dist(V,\partial W)]^{2(\alpha+1)}}  \int_{W}     |u^\ez|  ^{2 \alpha+2} \, dx+  (8\kappa+ \tilde C(\alpha)\epsilon)   
\int_W   (|Du^{\ez}| ^2+\kappa)^{\alpha }   \, dx.\\
&\quad+\tilde C( \alpha ) \epsilon       \frac1{ [\dist(V,\partial W)]^{2 }}  \int_W        (|Du^{\ez}| ^2+\kappa)^{\alpha-1 }  |u^\epsilon|^2   \, dx 
\end{align*}
for all $ V\Subset W\Subset U$. 
\end{lem}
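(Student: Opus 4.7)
The plan is to substitute $\phi=(|Du^\ez|^2+\kappa)^{\alpha-1}|u^\ez|^2\xi^{2(\alpha+1)}$ into \eqref{functional}, where $\xi\in C_c^\infty(W)$ is a standard cutoff with $\xi\equiv 1$ on $V$, $0\le\xi\le 1$ and $|D\xi|\le C/\dist(V,\partial W)$. Writing $v:=|Du^\ez|^2+\kappa$ and $\eta:=\xi^{2(\alpha+1)}$, the assumption $v\ge\kappa>0$ makes $\phi\in C_c^\infty(U)$. Three algebraic identities will be used systematically: $u^\ez_{ij}u^\ez_j=\tfrac12 v_i$, $u^\ez_iv_i=2\Delta_\infty u^\ez=-2\epsilon\Delta u^\ez$, and $|Dv|^2=4|D^2u^\ez Du^\ez|^2$. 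The strategy is to extract a term of the form $-2\int v^{\alpha+1}\eta\,dx$ on the right-hand side of $2\mathbb I_\epsilon(\phi)=\int[\Delta u^\ez u^\ez_i-\tfrac12 v_i]\phi_i\,dx$, and then use $\mathbb I_\epsilon(\phi)\ge 0$ (by \eqref{identityIII}) to rearrange into the desired upper bound.

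Computing $\phi_i$ and substituting the identities splits the expansion of $2\mathbb I_\epsilon(\phi)$ into three families. From the $(\alpha-1)v^{\alpha-2}v_i|u^\ez|^2\eta$-summand of $\phi_i$, the combination $|D^2u^\ez Du^\ez|^2+\epsilon(\Delta u^\ez)^2$ appears and is rewritten via \eqref{key inequalityII} as $(-\det D^2u^\ez)(v-\kappa)$, producing $-2(\alpha-1)\mathbb I_\epsilon(\phi)+2(\alpha-1)\kappa\int(-\det D^2u^\ez)v^{\alpha-2}|u^\ez|^2\eta\,dx$. From the $2v^{\alpha-1}u^\ez u^\ez_i\eta$-summand, one further integration by parts on $\int \Delta u^\ez v^\alpha u^\ez\eta\,dx$ produces the crucial $-2\int v^{\alpha+1}\eta\,dx$, together with $2\kappa\int v^\alpha\eta\,dx$ and a $(2\alpha+1)\epsilon\int\Delta u^\ez v^{\alpha-1}u^\ez\eta$-correction. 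The third family consists of boundary terms proportional to $\xi_i$. Moving $2\alpha\mathbb I_\epsilon(\phi)$ to the left-hand side yields
\[
2\int v^{\alpha+1}\eta\,dx+2\alpha\mathbb I_\epsilon(\phi) \le 4\kappa\int v^\alpha\eta\,dx+(\text{det-term})+(\epsilon\text{-correction})+(\text{boundary}).
\]
The det-term is handled by $\kappa/v\le 1$: for $\alpha\ge 1$ it is bounded by $2(\alpha-1)\mathbb I_\epsilon(\phi)$ and absorbed, while for $\alpha<1$ its sign is favourable and it is dropped.

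The $\epsilon$-correction is the delicate ingredient. Integrating by parts the term $(2\alpha+1)\epsilon\int\Delta u^\ez v^{\alpha-1}u^\ez\eta\,dx$, and again using $u^\ez_iv_i=-2\epsilon\Delta u^\ez$, yields $-(2\alpha+1)\epsilon\int v^\alpha\eta\,dx$ (movable to the LHS), $(2\alpha+1)\epsilon\kappa\int v^{\alpha-1}\eta\,dx$ (bounded by $(2\alpha+1)\epsilon\int v^\alpha\eta\,dx$ since $\kappa\le v$), an $\epsilon^2\int v^{\alpha-2}\Delta u^\ez u^\ez\eta$-residual, and $\epsilon$-boundary pieces. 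The $\epsilon^2$-residual is reduced to an $\epsilon\int v^\alpha\eta$-type term by the rewriting $\epsilon^2\Delta u^\ez=-\epsilon\Delta_\infty u^\ez=-\tfrac12\epsilon\langle Du^\ez,Dv\rangle$ followed by one integration by parts that exploits $v^{\alpha-2}v_j=\tfrac{1}{\alpha-1}(v^{\alpha-1})_j$ (when $\alpha\ne 1$); this step "swallows" the $v^{\alpha-2}$ factor and is precisely what keeps the final constants depending only on $\alpha$ and not on $\kappa$.

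The boundary pieces carry $u^\ez_i\xi_i$ or $v_i\xi_i$. Using $|Du^\ez|\le\sqrt v$ (and, when needed, one integration by parts to eliminate $v_i$ via $v^{\alpha-1}v_i=\tfrac{1}{\alpha}(v^\alpha)_i$), each reduces to an integrand bounded by a constant times $v^{\alpha+1/2}|u^\ez|\xi^{2\alpha+1}|D\xi|$. Young's inequality with exponents $\tfrac{2\alpha+2}{2\alpha+1}$ and $2\alpha+2$ then splits this as $\delta v^{\alpha+1}\eta + C(\alpha,\delta)|u^\ez|^{2\alpha+2}|D\xi|^{2\alpha+2}$; taking $\delta$ small absorbs the first piece into the LHS, and the second, combined with $|D\xi|\le C/\dist(V,\partial W)$, delivers the advertised $C(\alpha)\dist(V,\partial W)^{-2(\alpha+1)}\int|u^\ez|^{2\alpha+2}\,dx$. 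The main obstacle is bookkeeping: several rounds of IBP and uses of the equation $\epsilon\Delta u^\ez=-\Delta_\infty u^\ez$ are required to eliminate every intermediate $v^{\alpha-2}$ factor and to match the final estimate to the exact three-term form of the lemma, with all constants depending only on $\alpha$.
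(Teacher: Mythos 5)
Your overall plan — the test function $\phi=(|Du^\ez|^2+\kappa)^{\alpha-1}|u^\ez|^2\xi^{2(\alpha+1)}$, the expansion through \eqref{functional}, the use of $\mathbb I_\ez(\phi)\ge 0$ and of \eqref{key inequalityII}, and the extraction of $-2\int v^{\alpha+1}\eta\,dx$ by integrating $\int\Delta u^\ez\, v^\alpha u^\ez\eta\,dx$ by parts — is exactly the paper's. Your reorganization of the $(\alpha-1)v^{\alpha-2}v_i$-summand into $-2(\alpha-1)\mathbb I_\ez(\phi)+2(\alpha-1)\kappa\int(-\det D^2u^\ez)v^{\alpha-2}|u^\ez|^2\eta\,dx$ is a small variant (the paper keeps the analogous positive difference $J_1-I_1\ge 0$ separate rather than folding it back into $\mathbb I_\ez(\phi)$), and your treatment of that term by $\kappa/v\le 1$ (or by sign when $\alpha<1$) is fine.

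However, your handling of the $\ez$-correction has a genuine gap: the step you describe as the key to getting constants independent of $\kappa$ is circular. You integrate $\ez\int\Delta u^\ez\,v^{\alpha-1}u^\ez\eta\,dx$ by parts, use $u^\ez_iv_i=-2\ez\Delta u^\ez$ to create an $\ez^2\int v^{\alpha-2}\Delta u^\ez\,u^\ez\eta\,dx$ residual, and then propose to "reduce" it via $\ez^2\Delta u^\ez=-\tfrac12\ez\langle Du^\ez,Dv\rangle$ and an IBP using $v^{\alpha-2}v_j=\tfrac{1}{\alpha-1}(v^{\alpha-1})_j$. But $\ez\Delta u^\ez=-\Delta_\infty u^\ez=-\tfrac12\langle Du^\ez,Dv\rangle$ is exactly the identity you just applied in the other direction, and the IBP that follows is exactly the inverse of the IBP you just performed. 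Carrying it through returns the original $\ez\int v^{\alpha-1}\Delta u^\ez\,u^\ez\eta\,dx$ term with the same coefficient plus the same lower-order pieces, i.e.\ the chain collapses to $0=0$ and removes nothing. (You can also check this by naming $A=\ez\int v^{\alpha-1}\Delta u^\ez\,u^\ez\eta$ and $B=\ez^2\int v^{\alpha-2}\Delta u^\ez\,u^\ez\eta$: your first IBP gives $A=2(\alpha-1)B+\cdots$, and your proposed reduction gives $B=\tfrac{1}{2(\alpha-1)}A+\cdots$ with exactly opposite remainders.) The correct and simpler route is the one the paper takes for its term $I_{6,2}$, which is precisely $\ez\int v^{\alpha-1}\Delta u^\ez\,u^\ez\eta\,dx$: do \emph{not} integrate it by parts, but estimate it directly by Young's inequality, writing $v^{\alpha-1}|\Delta u^\ez||u^\ez|=\bigl(v^{(\alpha-2)/2}|\Delta u^\ez||u^\ez|\bigr)\bigl(v^{\alpha/2}\bigr)$ to get $\delta\ez\int v^{\alpha-2}(\Delta u^\ez)^2|u^\ez|^2\eta+C(\delta)\ez\int v^\alpha\eta$; the first piece is absorbed on the left because $\mathbb I_\ez(\phi)\ge J_2\ge\ez\int v^{\alpha-2}(\Delta u^\ez)^2|u^\ez|^2\eta\,dx$, and the second lands in the $\tilde C(\alpha)\ez\int v^\alpha\eta$ bucket. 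Finally, a smaller bookkeeping omission: expanding $u^\ez_iu^\ez_i=v-\kappa$ in the $2v^{\alpha-1}u^\ez u^\ez_i\eta$-summand also produces a $-2\kappa\int\Delta u^\ez\,v^{\alpha-1}u^\ez\eta\,dx$ piece (not just the $\ez$-correction), and not all boundary pieces reduce to $v^{\alpha+1/2}|u^\ez||D\xi|\xi^{2\alpha+1}$ — some carry $|u^\ez|^2$ and need the $E_1$-type Young split — but these are fixable by the same kind of Young estimates.
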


\section{Proofs of main results}

Let $u \in C(\Omega)$ be an $\infty$-harmonic function in $ \Omega\subset\mathbb R^2$.
It is known that $u\in  W^{1,\infty}_\loc(\Omega)$  and  $u$ is differentiable almost everywhere,
that is, $Du$ exists almost everywhere.
Note that $Du$ also coincides with the weak derivative of $u$, and we abuse of the notation here for convenience. 

 By Evans \cite{e03} (see also \cite{ey04,es11b}), we know that, on subdomains of $\Omega$, 
 $u $ is  approximated by exponential harmonic functions.  To be precise, 
fix an arbitrary domain $U\Subset \Omega$.  
 For $\epsilon\in(0,1]$,  consider the following Dirichlet problem: 
\begin{equation}\label{regular infty equ}
\left\{ \begin{array}{rl}
-\Delta_{\infty} u^{\ez} -\ez \Delta u^{\ez}=0& \ \text{in $U$}\\
u^{\ez}=u &  \ \text{on $\partial U$}.
\end{array} \right.
\end{equation}
%equivalently,    
%\begin{equation*} 
%\left\{ \begin{array}{rl}
%-{\rm div }\left(\exp\left(\frac 1{2\epsilon}|Du^{\ez}|^2\right) Du^{\ez}\right )=0& \ \text{in $U$}\\
%u^{\ez}=u &  \ \text{on $\partial U$}, 
%\end{array} \right.
%\end{equation*}
%which is the Euler-Lagrange equation of the following variational problem: 
%$$\inf\left\{
%\int_{U}\exp\left(\frac1{2\epsilon}|Dv|^2\right)\,dx \colon v\in C(\overline U)\cap \lip_\loc (U)\  {\rm and}\ v|_{\partial U}=u \right\}
%$$
See \cite[Theorem 2.1]{es11b} for the following 
 properties. 
\begin{thm}\label{es2011}
For each $\epsilon\in(0,1]$, there exists a unique  solution $u^{\ez}\in C^\infty(U)\cap C(\overline U)$ to \eqref{regular infty equ}. 
 Moreover for all $\epsilon\in(0,1]$, we have
$$\max_{\overline{U}} |u^{\ez}|\le \max_{\partial U} u$$
and for every open set $V\Subset U$
$$\max_{ \overline{V}} |Du^{\ez}|\le C(\max_{\partial U} |u|, \dist(V,\,\partial U)).$$
where $C$ is independent of $\epsilon$. 
Furthermore, $u^{\ez}\to u$ uniformly on $\overline{U}$. 
\end{thm}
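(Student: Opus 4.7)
The plan is to realize $u^\epsilon$ as the unique minimizer of the strictly convex functional
\[
J_\epsilon(v)=\int_U \exp\!\Big(\frac{|Dv|^2}{2\epsilon}\Big)\,dx
\]
over the admissible class $u+W^{1,\infty}_0(U)$, whose Euler--Lagrange equation is (up to a positive multiplicative factor) exactly $-\Delta_\infty v-\epsilon\Delta v=0$. Since $u$ is $\infty$-harmonic on the larger domain $\Omega\supset\overline U$ it is locally Lipschitz, so the admissible class is nonempty with $J_\epsilon$ finite; strict convexity of the integrand and weak lower semicontinuity then produce a unique minimizer $u^\epsilon$. The principal symbol of the equation is $\epsilon I + Du^\epsilon\otimes Du^\epsilon$, so once a local $L^\infty$ bound on $Du^\epsilon$ is available the operator is uniformly elliptic and a standard bootstrap via De Giorgi--Nash--Moser and Schauder estimates yields $u^\epsilon\in C^\infty(U)$; continuity up to $\partial U$ follows from barrier constructions built from affine cones.

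For the $L^\infty$ bound one applies the weak maximum principle to the quasilinear elliptic operator $-\Delta_\infty-\epsilon\Delta$, whose principal symbol is positive definite wherever $Du^\epsilon\neq 0$ and which still enjoys a comparison principle where $Du^\epsilon=0$. For the $\epsilon$-uniform interior gradient bound I would invoke the Bernstein method: set $\psi:=|Du^\epsilon|^2$, differentiate the equation once, and combine the pointwise identities to obtain a linear inequality $-\mathcal L_\epsilon\psi\le 0$, where $\mathcal L_\epsilon$ has principal part $\epsilon I + Du^\epsilon\otimes Du^\epsilon$ plus controlled lower-order terms. Testing against a cutoff $\eta^2$ with $\eta\equiv 1$ on $V$ and $\operatorname{supp}\eta\Subset U$, and using the $L^\infty$ bound already established, yields
\[
\max_{\overline V}|Du^\epsilon|\le C\big(\max_{\partial U}|u|,\dist(V,\partial U)\big),
\]
with $C$ independent of $\epsilon$. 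The crucial feature is that the two contributions to the principal symbol have the same sign, so the constant does not degenerate as $\epsilon\downarrow 0$.

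For the uniform convergence $u^\epsilon\to u$ on $\overline U$, the $\epsilon$-uniform estimates and Arzel\`a--Ascoli yield a subsequence $u^{\epsilon_k}\to w$ locally uniformly in $U$, with $w\in\lip_\loc(U)$. The equation is stable under uniform limits in the viscosity sense: if $\varphi\in C^2$ touches $w$ from above at $x_0$, evaluating the equation at contact points for the smooth $u^{\epsilon_k}$ and sending $\epsilon_k\to 0$ eliminates the $\epsilon_k\Delta\varphi$ term (bounded near $x_0$) and shows that $w$ is $\infty$-subharmonic at $x_0$; the supersolution property is analogous. Hence $w$ is $\infty$-harmonic in $U$. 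Boundary convergence is obtained by sandwiching $u^\epsilon$ near each $z\in\partial U$ between two affine cones taking the values $u(z)\pm\eta$ at $z$, noting that such cones are respectively sub- and supersolutions of the regularized equation and depend only on the continuity of $u$ at $z$. Jensen's uniqueness theorem for $\infty$-harmonic extensions then forces $w\equiv u$, so the convergence holds along the whole family, not just a subsequence.

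The step I expect to be the main obstacle is the $\epsilon$-independent interior gradient estimate, since it drives the entire compactness scheme. The equation interpolates between the linear Laplacian (large $\epsilon$) and the fully degenerate $\infty$-Laplacian ($\epsilon=0$), and the Bernstein computation has to be arranged so that the quadratic form in $D^2 u^\epsilon$ produced by the $-\Delta_\infty$ part matches in sign the one produced by $-\epsilon\Delta$, with cancellations preserved uniformly as $\epsilon\downarrow 0$; otherwise the constant blows up. Implementing this carefully with an appropriate cutoff is the essential technical point, whereas existence, uniqueness, smoothness and stability of viscosity solutions are all standard once the gradient bound is in hand.
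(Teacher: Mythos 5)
The paper does not prove this statement: the sentence immediately preceding it reads ``See \cite[Theorem~2.1]{es11b} for the following properties,'' so the result is imported verbatim from Evans--Smart and there is no in-paper proof for your attempt to be compared against. Your overall architecture --- the exponential-energy variational characterization, quasilinear elliptic bootstrap for interior smoothness, maximum principle for the $L^\infty$ bound, a differentiated-equation gradient estimate, then Arzel\`a--Ascoli plus viscosity stability and Jensen's uniqueness --- is indeed the strategy of the Evans/Evans--Yu/Evans--Smart program, and the existence, uniqueness, smoothness, $L^\infty$ bound, and convergence steps are correct in spirit.

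The gap is in the step you yourself flagged as crucial, the Bernstein computation for the $\epsilon$-uniform interior Lipschitz bound; as written it does not close. Set $\psi:=|Du^\epsilon|^2$ and $A_{ij}:=\epsilon\delta_{ij}+u^\epsilon_iu^\epsilon_j$. Differentiating $A_{ij}u^\epsilon_{ij}=0$ in $x_k$ and contracting with $u^\epsilon_k$ gives
\begin{equation*}
A_{ij}\psi_{ij}\;=\;2\epsilon\,|D^2u^\epsilon|^2\;-\;2\,|D^2u^\epsilon\,Du^\epsilon|^2,
\end{equation*}
which is \emph{not} one-signed, so the claimed linear inequality $\mathcal L_\epsilon\psi\ge 0$ with controlled lower-order terms does not hold. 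The only way to make the right-hand side nonnegative is to add $\tfrac12|D\psi|^2=2|D^2u^\epsilon Du^\epsilon|^2$ to the left, i.e.\ $A_{ij}\psi_{ij}+\tfrac12|D\psi|^2=2\epsilon|D^2u^\epsilon|^2\ge 0$; but the added term is quadratic in $D\psi$ (it is exactly the quantity one is trying to bound, not a controlled first-order coefficient), and it sits with the sign that makes the cutoff/maximum-point argument at an interior maximum of $\eta^2\psi$ produce no contradiction for large $\psi$. Exponentiating to $\zeta:=e^{\psi/(2\epsilon)}$ does give a true subsolution, $A_{ij}\zeta_{ij}=\zeta\bigl(|D^2u^\epsilon|^2+(\Delta u^\epsilon)^2\bigr)\ge 0$, but this only throws maxima to $\partial U$ and degenerates in the cutoff version because the ellipticity constant of $A$ is merely $\epsilon$. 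So ``testing against $\eta^2$ and using the $L^\infty$ bound yields the gradient bound'' is not justified; the $\epsilon$-independent Lipschitz estimate is precisely the nontrivial content of the cited theorem and must be obtained from the divergence form $\operatorname{div}\!\bigl(e^{|Du^\epsilon|^2/(2\epsilon)}Du^\epsilon\bigr)=0$ via a comparison or Caccioppoli/Moser argument with constants tracked so as not to blow up, not from a naive Bernstein identity for $|Du^\epsilon|^2$.
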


As a consequence of Lemma \ref{uniform 2} and Theorem \ref{es2011}, we have the following flatness. 
\begin{cor}\label{flatcor} Given $\epsilon\in(0,1]$,   $\bar x\in U$ and $r<\dist(\bar x,\partial U)/4$, if 
$$ \sup_{B(\bar x, 2r)}\frac{|u^\epsilon(x) - P(x) | }{r }\le  \lambda $$
for some linear function $P$ and $0< \lambda<1$, then 
\begin{align*}
& \bint_{B(\bar x,r)} (|Du ^{\ez}|^2- \langle DP,Du^\epsilon\rangle  )^2 \,dx\le 
 C(  \dist(\bar x,\partial U)) \lambda  
\end{align*}
\end{cor}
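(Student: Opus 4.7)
My plan is to apply Lemma~\ref{uniform 2} directly on $B(\bar x, r)$ with this linear function $P$, and then control each of the two bracketed averages on the right-hand side using the flatness hypothesis, the uniform Lipschitz bound from Theorem~\ref{es2011}, and the linearity of $P$.

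First, Theorem~\ref{es2011} gives a uniform-in-$\epsilon$ bound $|Du^\epsilon| \le C_0$ on $B(\bar x, 2r)$, where $C_0$ depends only on $\dist(\bar x,\partial U)$ and on the fixed quantity $\max_{\partial U} |u|$. In particular
\[
\left[\bint_{B(\bar x, 2r)} |Du^\epsilon|^4 \, dx\right]^{1/2} \le C_0^2,
\]
which takes care of the first bracket in Lemma~\ref{uniform 2}.

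Second, I need a uniform bound on $|DP|$. This is the only non-automatic step. Since $P$ is affine and $|u^\epsilon - P| \le \lambda r$ on $B(\bar x, 2r)$, for any unit vector $v$ the two antipodal points $\bar x \pm r v$ lie in $B(\bar x, 2r)$, and
\[
2r|\langle DP, v\rangle| = |P(\bar x + rv) - P(\bar x - rv)| \le |u^\epsilon(\bar x + rv) - u^\epsilon(\bar x - rv)| + 2\lambda r \le 2rC_0 + 2\lambda r,
\]
so $|DP| \le C_0 + \lambda \le C_0 + 1$.

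Third, the pointwise bound $|u^\epsilon - P|/r \le \lambda$ on $B(\bar x, 2r)$, together with $|DP| + |Du^\epsilon| \le 2C_0 + 1$ and $\lambda^4 \le \lambda^2$ (using $0<\lambda<1$), makes the second bracket in Lemma~\ref{uniform 2} at most $C(C_0)\lambda^2$. Substituting into Lemma~\ref{uniform 2} gives
\[
\bint_{B(\bar x, r)} (|Du^\epsilon|^2 - \langle DP, Du^\epsilon\rangle)^2 \, dx \le C \cdot C_0^2 \cdot \bigl(C(C_0)\lambda^2\bigr)^{1/2} \le C(\dist(\bar x, \partial U))\,\lambda,
\]
which is the claim. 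The main obstacle is conceptually absent here: with Lemma~\ref{uniform 2} already proved, the corollary is essentially a bookkeeping exercise, and the only mildly delicate point is recognizing that the two-point evaluation recovers $|DP|$ from the sup control of $u^\epsilon - P$.
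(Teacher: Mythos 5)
Your proof is correct and follows essentially the same route the paper takes: the corollary is stated there merely ``as a consequence of Lemma~\ref{uniform 2} and Theorem~\ref{es2011}'' with no further detail, and you have simply filled in that bookkeeping. The two-point evaluation used to extract the bound $|DP|\le C_0+\lambda$ from the sup-flatness hypothesis and the uniform Lipschitz bound is precisely the small step the paper glosses over, and it is handled correctly.
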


With the help of Theorem \ref{regular infty equ}, Lemma \ref{uniform 1} and Corollary \ref{flatcor}  (or Lemma \ref{uniform 2}), 
we are able to show the strong $L^p_\loc(\Omega)$-convergence of $Du^\epsilon$ for $1\le p<\infty$, that is,    Theorem~\ref{strong converge}. 

\begin{proof} [Proof of Theorem \ref{strong converge}.]
Fix $ \alpha>0$. By Lemma~\ref{uniform 1}  and  Theorem~\ref{es2011}, we know that $D|Du^\epsilon|^\alpha\in L^2_{\loc}(U)$ locally uniformly. 
From the weak compactness of $W_\loc^{1,2}(U)$, it follows that 
$ |Du^\epsilon|^\alpha $ converges, up to some subsequence,  to some function $f^{(\alpha)}$ in $L^p_\loc(U)$ and weakly  in $W^{1,2}_\loc(U)$. 
On the other hand, from  $u^\epsilon \to u$ in $C(\bar U)$ it follows that $Du^\epsilon$ converges to $Du$ weakly in $L^p (U)$.

We claim that $f^{(2)}=|Du|^2$ almost everywhere. Assume that the claim holds for the moment.  
Then for all $\az>0$, we have 
$$|Du^\epsilon|^\alpha=(|Du^\epsilon|^2)^{\alpha/2}\to (|Du|^2) ^{\frac \az 2}=|Du|^\az $$
almost everywhere as $\ez\to0$.
By Theorem~\ref{es2011} and Lebesgue's dominated convergence theorem, for any $p\in[1,\infty)$ and $V\Subset U$,
$$\lim_{\ez \to 0} \int_V ||Du^{\ez}|^\az-|Du|^\az|^p\, dx=0, $$
that is, $|Du^\epsilon|^\alpha\to |Du|^\alpha$ in $ L^p_{\loc}(U)$. 
When $\alpha=1$, this together with the weak convergence $Du^\epsilon \rightharpoonup Du$ in $L^p (U)$   shows that  
$Du^\epsilon\to Du$ strongly in $ L^p_{\loc}(U)$, that is,  $u^{\ez} \to u$ strongly in $W^{1,\,p}_{\loc}(U)$ for all $p\in[1,\infty)$. 

We prove the above claim below,  i.e.\ $f^{(2)}=|Du|^2$   almost everywhere. 
Assume that $u$ is differentiable at $\bar x$, and also assume that $\bar x$ is Lebesgue point of $f^{(2)}$ and $ Du $; the set of such $\bar x$ has full measure in $U$. 
Then for any $\lambda\in(0,1)$, there exists  $r_{\lambda,\bar x}\in (0, \dist(\bar x, \partial U)/8)$ such that for any $ r\in(0,r_{\lambda,\bar x})$, we have 
$$ \sup_{B(\bar x, 2r)}\frac{|u(x) - u(\bar x)-\langle Du(\bar x),(x-\bar x)\rangle | }{r }\le \lambda.$$

By Theorem~\ref{es2011}, for arbitrary     $ r\in(0,r_{\lambda,\bar x})$, there exists $\epsilon_{\lambda,\bar x,r}\in(0, 1]$ such that for all $\epsilon\in(0,\epsilon_{\lambda,\bar x,r })$, we have 
$$ \sup_{B(\bar x, 2r)}\frac{|u^\epsilon(x) -  u^\epsilon (\bar x)-\langle Du(\bar x),(x-\bar x)\rangle| }{r }\le 2\lambda.$$
Letting $P(x)= u^\epsilon (\bar x)-\langle Du(\bar x),(x-\bar x)\rangle$ in  Corollary \ref{flatcor}, we arrive at 
%\begin{lem} For all  $ r\in(0,r_{\lambda,\bar x})$ and $\epsilon\in(0,\epsilon_{\lambda,\bar x,r})$, 
\begin{align*}
& \bint_{B(\bar x,r)} (|Du ^{\ez}|^2- \langle Du(\bar x),Du^\epsilon\rangle  )^2 \,dx\le 
 C(u,\dist(\bar x,\partial U)) \lambda  \quad\forall r\in(0,r_{\lambda,\bar x}),\  \epsilon\in(0,\epsilon_{\lambda,\bar x,r}).  
\end{align*}
%\end{lem}
On the other hand,  since 
$ |Du^\epsilon|^2 \to f^{(2)}$ in $L^2_\loc(U)$ and $Du^\epsilon \rightharpoonup Du$ weakly in $L^2_\loc(U)$, 
for any $ r\in(0,\dist(\bar x,\partial U/4) )$ we have 
\begin{align*}
 \bint_{B(\bar x,r)} (f^{(2)}- \langle Du(\bar x),Du \rangle  )^2 \,dx\le & \liminf_{\epsilon\to0}\bint_{B(\bar x,r)} (|Du ^{\ez}|^2- \langle Du(\bar x),Du^\epsilon\rangle  )^2 \,dx.
\end{align*}
Therefore,
\begin{align*}
 \bint_{B(\bar x,r)} (f^{(2)}- \langle Du(\bar x),Du \rangle  )^2 \,dx \le 
 C(u,\dist(\bar x,\partial U)) \lambda  \quad\forall r\in(0,r_{\lambda,\bar x}).
\end{align*}
Since $\bar x$ is a Lebesgue point of $f^{(2)}$ and $ Du $, via  H\"older's inequality, we obtain 
$$|f^{(2)}(\bar x)-|Du|^2( \bar x)|=\lim_{r\to 0} \bint_{B(0,r)} |f^{(2)}- \langle Du(\bar x),Du \rangle  |\,dx \le  C(u, \dist(\bar x,\partial U)) \lambda ^{1/2}. $$
By letting $\lambda\to0$, we have $f^{(2)}(\bar x)=|Du|^2(\bar x) $, and conclude the claim. 
\end{proof}

%As a consequence of Theorem \ref{strong converge} and Lemma \ref{uniform 1}, we have the following corollary. 
% \begin{cor}\label{cor 1}
%(i) For $\alpha>1$,  $\epsilon\in(0,1]$ and $V\Subset W\Subset \Omega$, we have 
%\begin{align*}
%\|D|Du |^\alpha\|_{L^2(V)} 
%\le C\frac \az {\alpha-1}\frac1{\dist (V,\partial W)} 
%\||Du |^\alpha\|_{L^2(W)}, 
%\end{align*}
%and 
%$$\||Du |^2\|_{L^2(V)}\le C\frac1{\dist (V,\partial W)^2}   \|(u -a)^2\| _{L^2(W)}  \quad\forall a\in\mathbb R. $$
%
%(ii)  There exists a nonnegative Radon measure $\mu$ on $\Omega$ 
%such that $$\lim_{\epsilon\to0+}\int_U [-\det D^2u^\epsilon]\phi \,dx=\int_U\phi d \mu\quad\forall \phi \in C(\Omega), U\Subset\Omega.$$ 
%Moreover, $ \mu=-\det D^2u$ if $u\in C^2(\Omega)$. 
%\end{cor}
 
Now we are ready to prove Theorem~\ref{mainthm} and Theorem \ref{mainthm1} using Theorem  \ref{strong converge} and Lemma \ref{uniform 1}. 
\begin{proof}[Proof of Theorem 1.1]  Assume that $u\in C(\Omega)$ is $\infty$-harmonic in $\Omega\subset\mathbb R^2$.
Fix an arbitrary domain $U\Subset \Omega$ and let $u^\epsilon$ be the solution to the Dirichlet problem \eqref{regular infty equ} in $U$. 

Let us show the first part of the theorem. 
For $\alpha>0$,  by  Theorem~\ref{strong converge}, we know that $|Du^\epsilon|^\alpha$ weakly converges to  $ |Du|^\alpha$ in $W^{1,2}_\loc(U)$, 
and hence, together with Lemma~\ref{uniform 1}  and Theorem~\ref{es2011}, we obtain 
\begin{align*}
\|D|Du|^\alpha\|_{L^2(V)}&\le \liminf_{\epsilon\to0}\| D|Du^\epsilon|^\alpha\| _{L^2(V)}\\
&\le C(\alpha)\frac1{\dist (V,\partial W)}   \liminf_{\epsilon\to0}\||Du^\epsilon |^\alpha\| _{L^2(W)}\\
&    \le C(\alpha)\frac1{\dist (V,\partial W)}  \||Du |^\alpha\| _{L^2(W)}. 
\end{align*}
%This gives \eqref{ds 1}.    From Lemma \ref{uniform 1},  Theorem \ref{strong converge} and Theorem \ref{es2011}, it follows that 
%\begin{align*}\||Du |^2\|_{L^2(V)}&=\lim_{\epsilon\to0+}  \||Du^\epsilon |^2\|_{L^2(V)}\\
%&\le  C\frac1{\dist (V,\partial W)^2}  \lim_{\epsilon\to0+}[\epsilon \|u^\epsilon\|_{L^2(W)}+ \|(u^\epsilon -a)^2\| _{L^2(W)}]\\
%& \le C\frac1{\dist (V,\partial W)^2}   \|(u -a)^2\| _{L^2(W)}  \quad\forall a\in\mathbb R. \end{align*}

 Given $\alpha>0$, by the local strong convergence  $Du^\epsilon\to Du $ and the local weak convergence   $D|Du^\epsilon|^\alpha \rightharpoonup  D|Du |^\alpha$ in $L^2_\loc(U)$ shown in Theorem~\ref{strong converge}, 
 we have 
$$\int_U \langle D|Du|^\alpha,Du\rangle \phi\,dx=
\lim_{\epsilon\to0} \int_U \langle D|Du^\epsilon|^\alpha,Du^\epsilon\rangle \phi\,dx \quad\forall\phi\in C_c^\infty(U) 
$$
Note that $ D (|Du^\epsilon|^2+\kappa)^{\alpha/2}$ converges to $D |Du^\epsilon|^\alpha$ weakly in $L^2_\loc(U)$ as $\kappa\to0$,
we have 
\begin{align*}\int_U \langle D|Du|^\alpha,Du\rangle \phi\,dx&=
\lim_{\epsilon\to0}\lim_{\kappa\to0}  \int_U \langle  D (|Du^\epsilon|^2+\kappa)^{\alpha/2},Du^\epsilon\rangle \phi\,dx\\
& =
\lim_{\epsilon\to0}\lim_{\kappa\to0}  \int_U  \frac\alpha 2 ( |Du^\epsilon|^2+\kappa)^{\alpha/2-1} \langle  D  |Du^\epsilon|^2 ,Du^\epsilon\rangle \phi\,dx\\
& =
\lim_{\epsilon\to0}\lim_{\kappa\to0}  \int_U  \alpha   ( |Du^\epsilon|^2+\kappa)^{\alpha/2-1} \Delta_\infty u^\ez \phi\,dx   \quad\forall\phi\in C_c^\infty(U) 
\end{align*}
Applying $\Delta_\infty u^\ez =-\ez\Delta u^\ez$, we have 
\begin{align*}\int_U \langle D|Du|^\alpha,Du\rangle \phi\,dx& =
\lim_{\epsilon\to0}\lim_{\kappa\to0} -\ez \int_U   \alpha   ( |Du^\epsilon|^2+\kappa)^{\alpha/2-1} \Delta  u^\ez \phi\,dx \\
&= \lim_{\epsilon\to0} -
\alpha\ez \int_U |Du^\epsilon|^{\alpha-2}\Delta  u^\epsilon  \phi\,dx  \quad\forall\phi\in C_c^\infty(U) 
\end{align*}
 
Let $V= {\rm supp}\phi\Subset W\Subset U$. 
Notice that by  Lemma~\ref{uniform 1} and 
Theorem~\ref{es2011}, we obtain   
\begin{align*}
 \epsilon  \int_U|Du^\epsilon|^{2\alpha-4}(\Delta  u^\epsilon)^2  (\phi)^2\,dx
& \le C(  \phi )\epsilon  \int_V |Du^\epsilon|^{2\alpha-4}(\Delta  u^\epsilon)^2 \,dx\\
& \le C(\alpha, \phi,  \dist(V,\partial W)) \int_W |Du^\epsilon|^{2\alpha }  \,dx\\
& \le C(\alpha, \phi, u,\dist(V,\partial W),\dist(W,\partial U)) . 
\end{align*}
Thus via Young's inequality, we further have 
$$\left|-\alpha\epsilon \int_U |Du^\epsilon|^{\alpha-2}  \Delta  u^\epsilon  \phi\,dx\right| \le C\epsilon^{1/2} |V| + C
 \epsilon^{3/2} \int_U |Du^\epsilon|^{2\alpha-4} (\Delta  u^\epsilon)^2  (\phi)^2\,dx\to 0$$
as $\epsilon \to0$.  Therefore we conclude that 
$$\int_U \langle D|Du|^\alpha,Du\rangle \phi\,dx= 0\quad\forall\phi\in C_c (U) $$
as desired. 
\end{proof}

\begin{proof}[Proof of Lemma \ref{aronssonf}.]
A direct calculus gives that % $|Dw|  =\frac{4}3(x_1^{2/3}+ x_2^{2/3}) ^{1/2}$ and 
$$|Dw|  =\frac{4}3(x_1^{2/3}+ x_2^{2/3}) ^{1/2}\quad {\rm and}\quad
  |D|Dw|^2|   =\frac{4^2  }{3^3} (x_1^{-2/3}+ x_2^{-2/3} )^{1/2}  .$$
For any domain $U\Subset {\mathbb R}^2\setminus \{0\}$, since $|Dw|$  and $|D|Dw|^2|$ has upper and lower bounds on $\overline U$,
we know that  $|D|Dw|^\alpha|= \alpha 2^{-1}|Dw|^{ \alpha-2}  |D|Dw|^2| \in L^p(U)$ for all $p\ge 1$. 

Now assume that $0\in U\Subset {\mathbb R}^2$, and without loss of generality let $U=(-1,1)^2$. 
We have 
 \begin{align*}\int_{-1}^1\int_{-1}^1   \frac{|D|Dw|^2| ^p}{|Dw|^{p(2-\alpha)  }} \,dx&=
\frac{8\cdot 4^{2p}  }{3^{3p}} \int_{0 }^1\int_{0}^{x_1}\frac{ (x_1^{-2/3}+ x_2^{-2/3} )^{p/2}}{ (x_1^{2/3}+ x_2^{2/3}) ^{p(2-\alpha ) /2} } \,dx_2  dx_1\\
&=
\frac{8\cdot 4^{2p}  }{3^{3p}}\int_{0}^{1} \frac{ (1+t^{-2/3})^{p/2}  }{ (1+t^{ 2/3})^{p (2-\alpha)/2} 
  } \,dt  \int_{0 }^1     x_1^{-p/3}  
 x_1^{-p(2-\alpha ) /3}   x_1dx_1.
\end{align*}
The first integral is finite if and only if $p<3$, the second integral is finite if and only if $p<\frac{6}{3-\alpha}$ when $\alpha< 3$ and $p<\infty$ when $\alpha\ge3$. 

Therefore, we conclude that   $$|D|Dw|^\alpha|=  \alpha 2^{-1}|Dw|^{ \alpha-2  } |D|Dw|^2|  \in L^{ p}_\loc(\mathbb R^2)\quad \forall  p<p_\alpha \quad{\rm but} \quad  \notin L^ { p_\alpha} _\loc(\mathbb R^2) $$
and  
$$|D\log |Dw| |=   \frac12|Dw|^{  -2  } |D|Dw|^2|  \in L^{ p}_\loc(\mathbb R^2)\quad \forall  p<2 \quad{\rm but} \quad  \notin L^ { 2} _\loc(\mathbb R^2).$$
Moreover, $$\log |Dw| =\log\frac43+\frac12\log(x_1^{2/3}+x_2^{2/3}),$$
Since  $$ |x|^{2/3}\le x_1^{2/3}+x_2^{2/3} \le C|x|^{2/3},$$ by $\log |x|\in BMO(\mathbb R^2)$ we also have 
$ |D\log |Dw| | \in BMO_\loc(\mathbb R^2) $  as desired. 
\end{proof}
 
%\begin{rem} 
%When $n\ge 3$, there exists  an $\infty$-harmonic $u\in C^\infty(\Omega)$  for which   $|D^2u |^2 - (\Delta u )^2 $  change sign. 
%Indeed, let  $u(x)=x_1^{4/3}+x_2^{4/3}-2^{1/3}x_3^{4/3}$, which is $\infty$-harmonic in $\mathbb R^3$ and smooth  in  the domain $\{x\in\mathbb R^3| x_1>0, x_2>0, x_3>0\}$.  A direction calculation shows that 
%$$|D^2u(x)|^2-(\Delta u(x))^2=\frac{4^2}{9^2} \left\{[x_1^{-4/3}+ x_2^{-4/3}+ 2^{2/3} x_3^{-4/3}] - 
%[x_1^{-2/3}+ x_2^{-2/3}- 2^{2/3} x_3^{-2/3}]^2\right\} .$$
%Letting $x^{(k)}=(1,1,2^k)$ and $y^{(k)}= (2^k,1,2)$, we see that 
%$$|D^2u(x^{(k)})|^2-(\Delta u(x^{(k)}))^2  \to -\infty\quad{\rm and}\quad |D^2u(y^{(k)})|^2-(\Delta u(y^{(k)}))^2  \to  \infty \quad{\rm as}\  k\to\infty.$$
%\end{rem}

\begin{proof}[Proof of Theorem \ref{mainthm1}.]
 By Remark \ref{detrem}, $u\in  W^{1,\infty}_\loc(\Omega)$ allows to defined the distributional determinant $\det D^2v$, that is, 
$$\int_\Omega-\det D^2u\phi\,dx=\frac12\int_\Omega [-u_iu_j\phi_{ij}+|Du|^2\phi_{ii}]\,dx\quad\forall \phi\in C_c^2(\Omega).$$
Hence by Theorem \ref{strong converge}, 
$$\int_\Omega-\det D^2u\phi\,dx=\lim_{\ez\to0}\frac12\int_\Omega [-u^\ez_iu^\ez_j\phi_{ij}+|Du^\ez|^2\phi_{ii}]\,dx= \lim_{\ez\to0}\int_\Omega-\det D^2u^\ez \phi\,dx \quad\forall \phi\in C_c^2(\Omega).$$

Fix an arbitrary domain $U\Subset \Omega$ and let $u^\epsilon$ be the solution to the Dirichlet problem \eqref{regular infty equ} in $U$. 
A density argument shows that for all $\phi\in C_c (U)$,  we can define 
 $$\int_U-\det D^2u\phi\,dx=  \lim_{\ez\to0}\int_U-\det D^2u^\ez \phi\,dx.$$
Recalling  $-\det D^2u^\epsilon\ge 0$ in $U$ given by Lemma \ref{I}, 
we know that $-\det D^2u $ is indeed a nonnegative Radon measure. Moreover  the uniform upper estimates of $-\det D^2u^\epsilon $ yields that 
$$\|-\det D^2 u\|(V)\le\liminf_{\epsilon\to}\int_V-\det D^2u^\epsilon\,dx \le \frac{C}{[\dist(V, \partial W)]^2}\int_{W}|Du^\epsilon|^2 \,dx\quad\forall V\Subset W\Subset U.$$
Since $-\det D^2u^\epsilon \ge |D|Du^\epsilon||^2$ as given in Lemma \ref{I}, for $\phi\in C_c(U)$ with $\phi\ge 0$ by Theorem \ref{strong converge} we have 
 $$\int_U-\det D^2u\phi\,dx=\lim_{\ez\to0}\int_U-\det D^2u\phi\,dx   \ge \limsup_{\ez\to0}\int_U |D|Du^\epsilon||^2\phi\,dx\ge\int_U |D|Du ||^2\phi\,dx,$$
which yields that $-\det D^2u\,dx\ge |D|Du ||^2\,dx$ in $U$.  By the arbitrariness of $U\Subset\Omega$, we know that $-\det D^2u\,dx$ is a Radon measure enjoys the desired upper bounds and lower bounds. 

Finally, assume that $u\in C^2(\Omega)$.  If $Du (z)=0$ for some $z\in U$, then by  \cite{A1968} (see also  \cite{y06}),  
 $ u $ is a constant function in $\Omega$, and hence, we have $|D|Du| |^2=0=-\det D^2u  $ in $\Omega$. 
Now, we assume that $|Du|>0$ in $\Omega$. 
Up to  approximating $u$  in $C^2_{\loc}(\Omega)$ by smooth functions,
applying   Lemma \ref{detlem}, we  have  
$$(-\det D^2u ) |Du |^2=  |D^2u  Du |^2\quad{\rm in}\ \Omega.$$
Since $|Du|>0$ in $\Omega$, we have 
$$ -\det D^2u  =  \frac{|D^2u^\ez Du |^2}{|Du |^2}=|D|Du||^2\quad{\rm in}\ \Omega.$$
This completes the proof of Theorem \ref{mainthm1}.
\end{proof}

Using Lemma \ref{sobolev of u}, Theorem \ref{strong converge} and Theorem \ref{es2011}, we are able to prove Theorem \ref{mainthm0} as below. 
\begin{proof}[Proof of Theorem \ref{mainthm0}]
By Theorem \ref{es2011}, we know that the following term   
 $$ \int_W  (|Du^{\ez}| ^2+\kappa)^{\alpha }   \,dx   +    \frac1{[\dist(V,\partial W)]^{2 }}   \int_W  (|Du^{\ez}| ^2+\kappa)^{\alpha -1}  |u^\epsilon|^2   \, dx$$
appeared in Lemma \ref{sobolev of u} is bounded uniformly in $\epsilon $ for each fixed $\kappa>0$ when $\alpha\in(0,1)$ and for all $\kappa\in(0,1)$ when $\alpha\ge 1$. 
Letting $\epsilon \to0$ in  Lemma \ref{sobolev of u}, by Theorem \ref{strong converge} and Theorem \ref{es2011} we have 
 \begin{align*} 
  \int_{V} ( |Du |  ^2+\kappa)^{  \alpha+1 }   \, dx \le  
  & C (\alpha ) \frac1{[\dist(V,\partial W)]^{2(\alpha+1)}}  \int_{W}     |u |  ^{2 \alpha+2} \, dx +  8\kappa\int_W   (|Du | ^2+\kappa)^{\alpha }    \, dx.
\end{align*}
Letting $\kappa\to0$, we further obtain 
  $$\int_{V} |Du |  ^{ 2 \alpha+2 }   \, dx \le  C (\alpha ) \frac1{[\dist(V,\partial W)]^{2(\alpha+1)}}  \int_{W}     |u |  ^{2 \alpha+2} \, dx.$$
Observing $u-a$ is also $\infty$-harmonic, we know that the above also holds by replacing $u$ with $u-a$ for any $a\in\rr$. Thus Theorem \ref{mainthm0} holds with $p=2\alpha+2$. 
\end{proof}

Finally, we prove Corollary \ref{liuvioulle} using Theorem \ref{mainthm} and Theorem \ref{mainthm0}. 
\begin{proof}[Proof of Corollary \ref{liuvioulle}. ]  
Assume that $u\in C(\mathbb R^2)$ is $\infty$-harmonic satisfying $L^p$-vanishing condition.  By Theorem \ref{mainthm} with $\alpha=p/2$ and Theorem \ref{mainthm0},
we know that
 \begin{align*} 
\|D|Du|^\alpha\| _{L^2(\mathbb R^2)}&= \liminf_{R\to\infty}\|D|Du|^\alpha\|_{L^2( B(0,R))}\le C \liminf_{R\to\infty} \frac1{  R }\| |Du|^\alpha\| _{L^2( B(0,2R))}\\
&= C\liminf_{R\to\infty} \frac1{  R }\| |Du| \|^{p/2} _{L^p( B(0,2R))}\le C\liminf_{R\to\infty} \frac1{ R ^{1+2/p} } \|   u \|^{ p/2}_{L^p(  B(0,4R) )}.
\end{align*}
By $L^p$-vanishing condition,   we have $ \|D|Du|^2\|_{L^2(\mathbb R^2)}=0 $ and  hence $D|Du|^\alpha=0$ almost everywhere. 
Thus $|Du|=c$ almost everywhere.  By  Theorem \ref{mainthm0} again, we have 
$$ c^\alpha\le C\liminf_{R\to\infty}\frac1 R \| |Du|^\alpha\|_{L^2( B(0,2R))}= C \liminf_{R\to\infty}    \frac1{ R ^{1+2/p} } \|   u \|^{ p/2}_{L^p(  B(0,4R) )} . $$
 By $L^p$-vanishing condition again,   we have  $c=0$, that is, $u$ must be a constant function. 
\end{proof}

\section{The duality between the $1$-Laplacian and the $\infty$-Laplacian in the plane}
Let $U\subset  { \mathbb R^2}$ be a bounded domain. 
For a given pair of continua $E,\,F \subset \overline U$ and $1\le p\le \infty$, one defines the {\it $p$-capacity between $E$ and $F$ in $U$} as
 $${\rm Cap}_p(E,\,F;\,U)=\inf\{\|\nabla u\|^p_{ L^{p} (\Omega)}: \ u\in\Delta(E,\,F;\,U)\},$$
 where $\Delta(E,\,F;\,U)$ denotes the class of all $u\in W^{1,\,p}(\Omega)$ that are continuous
in $\Omega\cup E\cup F$ and satisfy $u=1$ on $E$, and $u=0$ on $F$. 
The following duality of capacities in the plane was established in \cite[pp.888-891]{R2008}, which   originally follows from \cite{Z1967}. 

\begin{lem}\label{pq duality}
Let $U\subset  {\mathbb R^2}$ be a Jordan domain enclosed by four arcs $\gamma_1$, $\gamma_2$, $\gamma_3$ and $\gamma_4$ counterclockwise. Then we have 
$$\left[{\rm Cap}_p(\gamma_1,\,\gamma_3;\,U)\right]^{\frac 1 p}\left[{\rm Cap}_q(\gamma_2,\,\gamma_4;\,U)\right]^{\frac 1 q}= 1$$
for $1\le p\le \infty$ and $q=\frac p {p-1}$. 
\end{lem}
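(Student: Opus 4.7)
The plan is to exploit the planar Hodge duality between $p$-harmonic and $q$-harmonic functions. For $1<p<\infty$, the direct method produces a minimizer $u\in\Delta(\gamma_1,\gamma_3;U)$ of the $p$-Dirichlet energy; this $u$ is $p$-harmonic in $U$, takes the boundary values $1$ on $\gamma_1$ and $0$ on $\gamma_3$, and satisfies the natural (zero-flux) boundary condition $|\nabla u|^{p-2}\partial_\nu u=0$ on $\gamma_2\cup\gamma_4$. Standard regularity gives $u\in C^{1,\alpha}_\loc(U)$ with continuity up to the boundary away from the four corners.

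Next, since ${\rm div}(|\nabla u|^{p-2}\nabla u)=0$ in the simply connected domain $U$, Poincar\'e's lemma yields a stream function $v$ with
$$\nabla v=(-|\nabla u|^{p-2}u_{x_2},\;|\nabla u|^{p-2}u_{x_1}).$$
A direct computation gives $|\nabla v|=|\nabla u|^{p-1}$, and hence the key pointwise identity $|\nabla u|^p=|\nabla v|^q$. Moreover, if $\tau$ is the counterclockwise unit tangent to $\partial U$ and $\nu$ the corresponding outward unit normal, one has $\nabla v\cdot\tau=|\nabla u|^{p-2}\partial_\nu u$; thus the free boundary condition on $\gamma_2\cup\gamma_4$ is exactly the statement that $v$ is constant on each of these two arcs.

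Normalizing $v$ by an additive constant so that $v\equiv 0$ on $\gamma_4$, integrating $\nabla v\cdot\tau$ along $\gamma_1$ from the $\gamma_4$-endpoint to the $\gamma_2$-endpoint and combining with the integration-by-parts identity
$$\int_U|\nabla u|^p\,dx=\int_{\gamma_1}|\nabla u|^{p-2}\partial_\nu u\,ds={\rm Cap}_p(\gamma_1,\gamma_3;U)=:C$$
shows that $v\equiv C$ on $\gamma_2$. Therefore $v/C\in\Delta(\gamma_2,\gamma_4;U)$ is admissible, which yields
$${\rm Cap}_q(\gamma_2,\gamma_4;U)\le C^{-q}\int_U|\nabla v|^q\,dx=C^{-q}\int_U|\nabla u|^p\,dx=C^{1-q}.$$
Since $1-q=-q/p$, this rearranges to ${\rm Cap}_p(\gamma_1,\gamma_3;U)^{1/p}\,{\rm Cap}_q(\gamma_2,\gamma_4;U)^{1/q}\le 1$. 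The reverse inequality is obtained by running the same construction starting from a $q$-capacity minimizer for the pair $(\gamma_2,\gamma_4)$, which produces an admissible competitor for ${\rm Cap}_p(\gamma_1,\gamma_3;U)$ realizing the matching bound.

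The main technical obstacle will be justifying the boundary computations rigorously, since the natural-boundary-condition minimizer $u$ need not be smooth up to the four corners; pointwise trace of $|\nabla u|^{p-2}\partial_\nu u$ on $\gamma_1\cup\gamma_3$ and constancy of $v$ on $\gamma_2\cup\gamma_4$ should be established via an approximation by smooth quadrilaterals exhausting $U$ (rounding the corners and passing to the limit using the stability of $p$-capacity). The extreme cases $p\in\{1,\infty\}$ are then recovered by letting $p\to 1^+$ or $p\to\infty$ in the identity already proven for interior $p$, combining monotonicity of ${\rm Cap}_p^{1/p}$ in $p$ with the lower semicontinuity of $L^p$-norms.
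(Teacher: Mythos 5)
The paper does not supply its own proof of this lemma: it simply cites Romanov \cite[pp.\ 888--891]{R2008}, tracing back to Ziemer \cite{Z1967}, so there is no ``paper proof'' against which to compare yours line by line. That said, your stream-function construction is exactly the mechanism underlying the cited references, and the core interior computation is right: for $1<p<\infty$, a $p$-capacity minimizer $u$ is $p$-harmonic with zero-flux on $\gamma_2\cup\gamma_4$, the conjugate $v$ with $\nabla v=|\nabla u|^{p-2}(\nabla u)^\perp$ satisfies $|\nabla v|^q=|\nabla u|^p$ and $\nabla v\cdot\tau=|\nabla u|^{p-2}\partial_\nu u$, and the normalized $v/C$ becomes a competitor for $\mathrm{Cap}_q(\gamma_2,\gamma_4;U)$; the exponent arithmetic $1-q=-q/p$ is also correct, and the reverse inequality by swapping the roles of the two pairs of arcs is legitimate (using $\mathrm{Cap}_p(\gamma_1,\gamma_3)=\mathrm{Cap}_p(\gamma_3,\gamma_1)$ via $u\mapsto 1-u$).

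There are two genuine gaps. First, the boundary analysis (pointwise traces of the normal $p$-flux on $\gamma_1\cup\gamma_3$, continuity and constancy of $v$ on $\gamma_2\cup\gamma_4$, and admissibility $v/C\in\Delta(\gamma_2,\gamma_4;U)$) is not just a technicality at the corners; you acknowledge it, but "approximation by smooth quadrilaterals" needs an actual stability statement for $p$-capacity under such exhaustions and a compactness argument for the conjugates $v$, none of which is sketched. Second, and more seriously, the treatment of $p\in\{1,\infty\}$ does not go through as stated. Monotonicity of $p\mapsto\mathrm{Cap}_p^{1/p}$ is false in general on a bounded domain---H\"older only gives $\|\nabla u\|_{L^{q}}\le|U|^{1/q-1/p}\|\nabla u\|_{L^p}$ for $q<p$, so the normalized capacities are comparable only up to volume factors, not monotone. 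Lower semicontinuity of $L^p$ norms gives one-sided bounds at best, and the admissible class itself changes character at the endpoints ($W^{1,\infty}$/Lipschitz at $p=\infty$, and BV-type behaviour with non-existence of $W^{1,1}$ minimizers at $p=1$), so the identity at $p\in\{1,\infty\}$ is not a soft limit of the interior case. If you want the endpoints, you must either prove the convergence $\mathrm{Cap}_p^{1/p}\to\mathrm{Cap}_\infty$ and $\mathrm{Cap}_q^{1/q}\to\mathrm{Cap}_1$ directly for this quadrilateral configuration, or give separate arguments (e.g.\ via $\infty$-harmonic/AML competitors on one side and coarea/perimeter on the other), as Ziemer does in the extremal-length formulation.
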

When $1< p < \infty$ and $q=\frac p {p-1}$, 
this duality between capacities is related to the following equation system
$$
\left\{ \begin{array}{rl}
v_x=&|D u|^{p-2} u_y\\
v_y= &-|D u|^{p-2} u_y
\end{array} \right.
$$
in a domain $U\subset \mathbb R^2$; see \cite{R2008}. The function $u$ is $p$-harmonic and $v$ is $q$-harmonic, and their gradients are orthogonal to each other. This is a generalization of classical Cauchy-Riemann equations and was applied in  e.g.\ \cite{L1987}. Also it is related to the hodograph transformation, which, for example, was applied to show the sharp H\"older regularity of solutions to certain equations involving the $p$-Laplacian; see e.g.\ \cite{IM1989} and \cite{ATU2017}. We also refer to \cite[Chapter 16]{AIM2009} for more applications of hodograph transformation. 
Especially, our Lemma 2.3 is partially motivated by the lower estimate on the determinant of the Jacobian of the hodograph transformation (see e.g.\ \cite[Lemma 2.1]{BK2005}) . 

However, when $p=1$ or $\infty$ we no longer have such a nice equation system, even though the duality of capacities in Lemma~\ref{pq duality} still holds. The reason is that a $1$-harmonic  function  may not be continuous; it can even  be a summation of several characteristic functions of sets. Then it is not very meaningful to talk about the orthogonality of gradient between $1$-harmonic functions and $\infty$-harmonic functions. 

Nevertheless, when $u$ is a smooth infinity harmonic function, notice that $|Du|^2$ is constant along the gradient trajectory of $u$ and  $D|Du|^2$ is orthogonal to it. Then $|Du|^2$ behaves similar to a dual function of $u$ in the above sense. Indeed, motivated by \cite{e93,E2007} we have the following observation. 

\begin{prop}\label{dual function}
Let $U\subset\mathbb R^2$ be a domain. 
If $u \in  C^2(U)$ is an $\infty$-harmonic function so that $Du\neq 0$ and $\det D^2 u \neq 0$,
then $ v=\frac 1 2 |Du|^2$ satisfies the following equation
\begin{equation}\label{1-laplace}
-{\rm div}\left(\frac {Dv}{|D v|}\right)= \frac {|D v|}{2v}.
\end{equation}
The geometric meaning of the equation is that, the mean curvature of the level set of $v$, equivalently that of the gradient trajectory of $u$, is   $ {|D^2 uDu|}/{|Du|^2}$. 
\end{prop}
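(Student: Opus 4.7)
The plan is to exploit, in two ways, the fact that the $\infty$-harmonic equation forces $Dv=D^2u\,Du$ to be perpendicular to $Du$. Since $v=\tfrac12|Du|^2$ gives $v_i=u_ju_{ij}$, I have $\langle Dv,Du\rangle=u_iu_ju_{ij}=\Delta_\infty u=0$. Combined with identity \eqref{det2} from Lemma \ref{detlem} applied to $u$, and using $\Delta_\infty u=0$, this collapses to
\[
|Dv|^2=|D^2u\,Du|^2=(-\det D^2u)\,|Du|^2,
\]
which is strictly positive on $U$ by hypothesis. So $Dv$ never vanishes and the $1$-Laplacian ${\rm div}(Dv/|Dv|)$ is classically well defined.

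The key planar observation is that any two vectors perpendicular to the nonzero $Du$ are parallel, hence there is a locally constant sign $\sigma\in\{\pm 1\}$ with
\[
\frac{Dv}{|Dv|}=\sigma\,\frac{(-u_2,u_1)}{|Du|},\qquad Dv=\sigma\,\frac{|Dv|}{|Du|}(-u_2,u_1).
\]
I then compute ${\rm div}(Dv/|Dv|)$ by differentiating the first expression componentwise. The cross terms $\mp u_{12}/|Du|$ coming from $\partial_1(-u_2/|Du|)$ and $\partial_2(u_1/|Du|)$ cancel, and $\partial_i|Du|=v_i/|Du|$ takes care of the rest, giving
\[
{\rm div}\!\left(\frac{Dv}{|Dv|}\right)=\sigma\,\frac{u_2v_1-u_1v_2}{|Du|^3}.
\]
Substituting the second displayed expression for $Dv$ yields $u_2v_1-u_1v_2=-\sigma|Du|\,|Dv|$, so the signs cancel and
\[
-{\rm div}\!\left(\frac{Dv}{|Dv|}\right)=\frac{|Dv|}{|Du|^2}=\frac{|Dv|}{2v},
\]
which is the claimed PDE.

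The geometric assertion is then immediate. Because $Dv\perp Du$, every level set of $v$ coincides locally with an integral curve $\gamma$ of $Du$; parameterizing so that $\dot\gamma=Du$ gives $\ddot\gamma=D^2u\,Du=Dv$, and the same parallelism identity $|u_1v_2-u_2v_1|=|Du|\,|Dv|$ turns the standard planar curvature formula $\kappa=(\dot\gamma\times\ddot\gamma)/|\dot\gamma|^3$ into $|\kappa|=|D^2u\,Du|/|Du|^2$, matching $-{\rm div}(Dv/|Dv|)$ exactly. I do not foresee a genuine obstacle here: the proof is a short two-dimensional calculation. The only subtle point is recognizing that Lemma \ref{detlem} collapses $|D^2u\,Du|^2$ to $-\det D^2u\cdot|Du|^2$ once $\Delta_\infty u=0$ is used, which is precisely what makes $|Dv|$ factor nicely through $v$; sign tracking for $\sigma$ is routine and drops out of the final identity.
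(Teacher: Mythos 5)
Your argument is correct and reaches the conclusion by a route that is genuinely different from, and somewhat more elementary than, the paper's. The paper first invokes \cite[Lemma 2]{A1968} to upgrade $u$ from $C^2$ to smooth, then differentiates the $\infty$-Laplace equation (introducing third derivatives $u_{ijk}$) to obtain $-\langle D^2v\,Du,Du\rangle=|Dv|^2$, and finally identifies the left side of \eqref{1-laplace} through the planar decomposition of $\Delta v$ along $Dv$ and $Dv^\perp$. You avoid the smoothness upgrade entirely: the orthogonality $\langle Dv,Du\rangle=\Delta_\infty u=0$ requires no differentiation, and in the plane it forces $Dv/|Dv|=\sigma(-u_2,u_1)/|Du|$ with $\sigma$ a locally constant sign; the divergence of the right-hand side involves only second derivatives of $u$, the $u_{12}$ cross terms cancel, and the parallelism identity $u_2v_1-u_1v_2=-\sigma|Du|\,|Dv|$ makes $\sigma$ drop out. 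What your route buys is that ${\rm div}(Dv/|Dv|)$ is classically defined under the stated $C^2$ hypothesis, since $(-u_2,u_1)/|Du|$ is a $C^1$ representative of $Dv/|Dv|$, whereas the paper's manipulation of $D^2v$ genuinely needs the regularity lemma to make sense. What the paper's route buys is that the curvature interpretation is visible at once in the term $\langle D^2v\,Dv^\perp,Dv^\perp\rangle$, which you instead recover afterward via the trajectory parametrization $\dot\gamma=Du$, $\ddot\gamma=Dv$. Your use of \eqref{det2} together with $\Delta_\infty u=0$ to get $|Dv|^2=(-\det D^2u)\,|Du|^2>0$, and hence $Dv\neq0$, is correct and is a clean replacement for the paper's direct appeal to invertibility of $D^2u$.
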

\begin{proof}
First of all, by \cite[Lemma 2]{A1968} we know that $u$ is smooth. 
Then a direct calculation via the equation of $u$ shows that
$$-\langle D^2v Du,\,Du\rangle=-(u_{ijk}u_i +u_{ij}u_{ik})u_j u_k = |D^2u Du|^2 =|Dv|^2. $$
Since we have assumed that $\det D^2 u \neq 0$, then $Dv\neq 0$. 
By the  orthogonality between $Du$  and $Dv$,  we then have
$$-2 v \langle D^2v\frac{Dv^\perp}{|Dv^\perp|},\,\frac{Dv^\perp}{|Dv^\perp|} \rangle=|Dv|^2.$$
In the plane we further deduce
$$- |Dv| {\rm div}\left(\frac {Dv}{|D v|}\right)= \frac {|Dv|^2}{2 v}.$$
As  $Dv\neq 0$, consequently we conclude the proposition.
\end{proof}

However in general \eqref{1-laplace} is not true; one can check that for $w=x_1^{4/3}- x_2^{4/3}$   in any neighborhood of the set where $D|D^2 w|=\infty$, i.e.\ the $x_1$-axis and $x_2$-axis.
Indeed,  there is another singular term on the right-hand side of \eqref{1-laplace}; see below.

\begin{lem}\label{dual example}
The function $v=\frac12|Dw|^{2}$ is a weak solution of the equation 
$$-{\rm div}\left(\frac {Dv}{|D v|}\right)= \frac {|D v|}{2v}- 2  (\mathcal H^1|_{\{x_1=0\}}+\mathcal H^1|_{\{x_2=0\}})\quad{\rm in}\ \mathbb R^2$$.
\end{lem}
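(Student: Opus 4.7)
\medskip

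\noindent\textbf{Proof plan for Lemma~\ref{dual example}.}
The strategy is to exploit the fact that $w$ is smooth and $\infty$-harmonic on each open quadrant, so Proposition~\ref{dual function} gives the equation pointwise there; the extra singular right-hand term then arises purely from jumps of the unit vector field $\nu:=Dv/|Dv|$ across the coordinate axes when we integrate by parts globally against a test function.

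First I would compute the explicit form. Writing $w=x_1^{4/3}-x_2^{4/3}$, one gets $v=\tfrac12|Dw|^2=\tfrac{8}{9}(|x_1|^{2/3}+|x_2|^{2/3})$, and hence, in any open quadrant,
\begin{equation*}
Dv=\tfrac{16}{27}\bigl(\mathrm{sgn}(x_1)|x_1|^{-1/3},\,\mathrm{sgn}(x_2)|x_2|^{-1/3}\bigr),\qquad
\nu=\frac{Dv}{|Dv|}.
\end{equation*}
In each open quadrant $w$ is $C^\infty$, $Dw\ne0$ and $\det D^2 w\ne0$, so Proposition~\ref{dual function} yields $-\mathrm{div}\,\nu=|Dv|/(2v)$ classically there. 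The key trace computation is that, with $x_1$ fixed and nonzero, $\nu_2=\mathrm{sgn}(x_2)|x_2|^{-1/3}/(|x_1|^{-2/3}+|x_2|^{-2/3})^{1/2}\to\pm1$ as $x_2\to 0^{\pm}$, and analogously $\nu_1\to\pm1$ as $x_1\to 0^{\pm}$. Thus $\nu$ has a jump of size $2$ in the normal direction across each axis.

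Next I would fix $\phi\in C_c^\infty(\mathbb R^2)$, set $U_\delta:=\{|x_1|>\delta\}\cap\{|x_2|>\delta\}$ (a union of four open regions avoiding the axes), and apply the divergence theorem on each connected component:
\begin{equation*}
\int_{U_\delta}\nu\cdot D\phi\,dx
=\int_{U_\delta}\frac{|Dv|}{2v}\,\phi\,dx
+\int_{\partial U_\delta}\phi\,(\nu\cdot n_{\mathrm{out}})\,d\mathcal H^1.
\end{equation*}
Letting $\delta\to0$: the left-hand side tends to $\int\nu\cdot D\phi\,dx$ because $|\nu|\le1$; the first right-hand term tends to $\int|Dv|\phi/(2v)\,dx$ by dominated convergence, since $|Dv|/(2v)$ is locally integrable (in polar coordinates near the origin the density is $\lesssim r^{-1}\cdot(|\sin\theta|^{-1/3}+|\cos\theta|^{-1/3})$, integrable against $r\,dr\,d\theta$; and on the axes away from the origin it behaves like $|x_k|^{-1/3}$). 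For the boundary integrals, each of the four walls $\{x_k=\pm\delta\}$ contributes, in the limit, $-\int_{\{x_k=0\}}\phi\,d\mathcal H^1$ by the jump computation above, for a total of $-2\int_{\{x_1=0\}}\phi-2\int_{\{x_2=0\}}\phi$.

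Assembling these limits yields, for every test function $\phi$,
\begin{equation*}
\Bigl\langle -\mathrm{div}\,\nu,\phi\Bigr\rangle
=\int\frac{|Dv|}{2v}\phi\,dx
-2\int_{\{x_1=0\}}\phi\,d\mathcal H^1-2\int_{\{x_2=0\}}\phi\,d\mathcal H^1,
\end{equation*}
which is exactly the asserted distributional identity. The main obstacle is bookkeeping of the signs and orientations of the outward normals on each of the four walls of $U_\delta$, together with checking the (fairly delicate) local integrability of $|Dv|/(2v)$ near the origin where both axes meet; both are handled by the polar-coordinate estimate above and the explicit trace computation of $\nu$.
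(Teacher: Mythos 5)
Your approach is essentially the same as the paper's: apply Proposition~\ref{dual function} classically in each open quadrant, integrate by parts against a test function over regions that avoid the axes, and identify the $-2\mathcal H^1$ contributions from the jump $\nu_2\to\pm1$ (resp.\ $\nu_1\to\pm1$) across the coordinate axes. The paper instead proves the identity case-by-case depending on which axes meet $\mathrm{spt}\,\phi$ and sends a horizontal slice $S_\eta$ to the axis, whereas you exhaust by $U_\delta$ and handle all four walls simultaneously; your version is marginally more uniform and, usefully, spells out the local integrability of $|Dv|/(2v)$ near the origin (the polar estimate $g(\theta)\lesssim|\cos\theta|^{-1/3}+|\sin\theta|^{-1/3}$), a point the paper leaves implicit. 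Both are correct, and the key computations (explicit $Dv$, $|Dv|$, and the trace of $\nu$ on the axes) coincide.
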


\begin{proof}

For $\phi\in C_c^\infty(\mathbb R^2)$, write 
$$F(\phi)=  \int_{\mathbb R^2} \langle \frac {Dv}{|D v|}, D\phi\rangle \,dx - \frac {|D v|}{2v}\phi\,dx.$$
For $\eta\in\mathbb R$, write $$S_\eta=\{(x_1,\eta): x_1\in\mathbb R^1\}.$$

If  $\{(0,x_2)|x_2\in\mathbb R\}\cap   \rm spt \phi=\emptyset $, by the Green  identity and Proposition \ref{pq duality}, we have 
$$F(\phi)= -2\lim_{\eta\to 0_+}  \int_{S_\eta}  \frac { v_2}{|D v|} \phi  \,dx_1.$$
Note that $$v= \frac{4^2}{2\cdot 3^2}(x_1^{2/3}+ x_2^{2/3})  $$ and $$v_2=\frac{4^2}{  3^3}  x_2^{-1/3}\quad, |Dv|=\frac{4^2}{  3^3}  (x_1^{-2/3}+x_2^{-2/3})^{1/2} .$$
It follows that 
$$F(\phi)= -2\lim_{\eta\to 0_+}  \int_{x_1\in\mathbb R}  \frac { x_1^{-1/3}}{(x_1^{-2/3}+\eta^{-2/3})^{1/2}} \phi (x_1,\eta) \,dx_1= -2\int_{x_1\in\mathbb R}\phi (x_1,0) \,dx_1 $$
as desired. 

If  $\{(x_1,0)|x_1\in\mathbb R\}\cap   \rm spt \phi=\emptyset $,  we have similar result. 
If  $ (0,0)\in   \rm spt \phi=\emptyset $,  similarly, we have 
$$F(\phi)=   -2\int_{x_1\in\mathbb R}\phi (x_1,0) \,dx_1- 2\int_{x_2\in\mathbb R}\phi (0,x_2) \,dx_2 $$
as desired. 
\end{proof}

 \section{Proofs of the Lemmas \ref{uniform 1} to \ref{sobolev of u}}
Suppose that $U\subset \mathbb R^2$ is a bounded domain, and for $\epsilon\in(0,1)$, let $u^\epsilon\in C^\infty(U)$ be a solution to   the equation \eqref{infty ez}.

\begin{lem}\label{cacciopoli ez} 
For any $\alpha>0$
and  $\xi \in C^{\infty}_c(U)$, we have
\begin{align*}
& \int_{U}  |D |Du^\epsilon |^\alpha|^2 \xi^2 \,dx +
 {\ez}   \int_U|Du^{\ez}|^ {2\alpha-4}    |\Delta u^{\ez}|^2  \xi^2\,dx\le C (\alpha)\int_{U}|Du^{\ez}| ^{2\alpha }(|D\xi|^2 +|D^2\xi|\,|\xi|)\,dx ,
\end{align*}
where the constant $C$ is absolute.
\end{lem}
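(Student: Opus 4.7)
The plan is to test $\phi = (|Du^\ez|^2+\kappa)^{\alpha-1}\xi^4$ (with a regularizer $\kappa>0$) in the two expressions \eqref{functional} and \eqref{identityIII} for $\mathbb{I}_\ez$, equate them, and then send $\kappa\to 0$. Using the pointwise identities $u^\ez_k u^\ez_{ki} u^\ez_i = \Delta_\infty u^\ez = -\ez\Delta u^\ez$ and $|D^2 u^\ez Du^\ez|^2 = |Du^\ez|^2|D|Du^\ez||^2$ from Lemma~\ref{I} to process the derivatives of $\phi$, and collecting via the elementary bounds $\alpha|Du^\ez|^2+\kappa \ge \min(\alpha,1)(|Du^\ez|^2+\kappa)$ and $\alpha + \kappa/|Du^\ez|^2 \ge \alpha$, the equality reduces to
\[
\min(\alpha,1)\int(|Du^\ez|^2+\kappa)^{\alpha-1}|D|Du^\ez||^2\xi^4\,dx + \alpha\ez\int(|Du^\ez|^2+\kappa)^{\alpha-2}(\Delta u^\ez)^2\xi^4\,dx \le 2\int(|Du^\ez|^2+\kappa)^{\alpha-1}\xi^3[\Delta u^\ez u^\ez_i - u^\ez_{ij}u^\ez_j]\xi_i\, dx.
\]

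It remains to bound this right-hand side. The piece containing $u^\ez_{ij}u^\ez_j = \tfrac12\partial_i|Du^\ez|^2$ is straightforward: rewriting $(|Du^\ez|^2+\kappa)^{\alpha-1}\partial_i|Du^\ez|^2 = \alpha^{-1}\partial_i(|Du^\ez|^2+\kappa)^\alpha$ and integrating by parts once produces only terms majorized by $C(\alpha)\int(|Du^\ez|^2+\kappa)^\alpha(|D\xi|^2+|D^2\xi|\xi)$. The obstructive piece is $\int(|Du^\ez|^2+\kappa)^{\alpha-1}\xi^3 u^\ez_i\Delta u^\ez \xi_i$: a direct Young's inequality against the $\alpha\ez(\Delta u^\ez)^2$ term on the left would generate a forbidden $1/\ez$ factor and destroy the uniform bound. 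This is the main obstacle.

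The resolution is to integrate the offending term by parts in the Laplacian $\Delta u^\ez = u^\ez_{jj}$: when the derivative $\partial_j$ falls on $(|Du^\ez|^2+\kappa)^{\alpha-1}$, contraction with $u^\ez_j$ yields $u^\ez_{kj}u^\ez_k u^\ez_j = \Delta_\infty u^\ez = -\ez\Delta u^\ez$, producing a similar-looking term $4(\alpha-1)\ez\int(|Du^\ez|^2+\kappa)^{\alpha-2}\Delta u^\ez \xi^3 u^\ez_i\xi_i$ (the remaining derivatives fall benignly on $\xi$ or reproduce the already-handled $u^\ez_{ij}u^\ez_j$ piece). Substituting $\ez\Delta u^\ez=-\Delta_\infty u^\ez$ and using the sharp pointwise bound $|\Delta_\infty u^\ez| = |\langle D^2 u^\ez Du^\ez, Du^\ez\rangle| \le |Du^\ez|^2|D|Du^\ez||$ trades the $\Delta u^\ez$ factor for $|Du^\ez|^2|D|Du^\ez||$ with no $\ez$ remaining. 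An $\ez$-independent Young's inequality now absorbs a small multiple of $(|Du^\ez|^2+\kappa)^{\alpha-1}|D|Du^\ez||^2\xi^4$ into the left and leaves $C(\alpha)(|Du^\ez|^2+\kappa)^{\alpha-3}|Du^\ez|^6|D\xi|^2\xi^2$; a case split on whether $|Du^\ez|^2$ is smaller or larger than $\kappa$ shows $(|Du^\ez|^2+\kappa)^{\alpha-3}|Du^\ez|^6 \le C(\alpha)(|Du^\ez|^{2\alpha}+\kappa^\alpha)$. Letting $\kappa\to 0$ by dominated convergence (valid since $Du^\ez\in L^\infty_\loc$) together with the identification $|D|Du^\ez|^\alpha|^2 = \alpha^2|Du^\ez|^{2\alpha-2}|D|Du^\ez||^2$ then yields the claimed Cacciopoli estimate.
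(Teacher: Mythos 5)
Your proposal is correct and follows essentially the same route as the paper's proof: test with $\phi=(|Du^\ez|^2+\kappa)^{\alpha-1}\xi^{2m}$, combine the two representations \eqref{functional} and \eqref{identityIII} of $\mathbb I_\ez(\phi)$, move the $(\alpha-1)$-terms to the left, integrate the $\Delta u^\ez\,u^\ez_i\xi_i$ piece by parts in the Laplacian, and absorb the resulting $\Delta_\infty u^\ez$ term by Cauchy--Schwarz ($|\Delta_\infty u^\ez|\le |Du^\ez|^2|D|Du^\ez||$) and Young, then send $\kappa\to 0$. The differences are cosmetic: you use $\xi^4$ where the paper uses $\xi^2$ (both yield the Caccioppoli bound after a suitable cut-off choice), and your intermediate substitution $\Delta_\infty u^\ez=-\ez\Delta u^\ez$ followed immediately by $\ez\Delta u^\ez=-\Delta_\infty u^\ez$ is a vacuous round trip --- the point, which both you and the paper exploit, is simply that the term produced by the IBP carries $\Delta_\infty u^\ez$ rather than $\ez^{-1}\Delta_\infty u^\ez$, so no forbidden $1/\ez$ appears.
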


We obtain Lemma \ref{uniform 1} as immediate consequence by choosing $\xi\in C_c(U)$ so that $\xi=1$ on $U$ with 
$$|D\xi|\le  \frac{2}{ \dist(V,\partial U)} \quad{\rm and}\quad |D^2\xi|\le \frac{C}{[\dist(V,\partial U)]^2}.$$

\begin{proof}[Proof of Lemma \ref{cacciopoli ez}.]
The case $\alpha=1$ is already proved in Corollary \ref{I2} via taking  $\phi=\xi$. Now we assume that $\alpha\in (0,1)\cup(1,\infty)$.

Let $\phi= (|Du^{\ez}| ^2+\kappa)^{  \alpha-1 } \xi^2$  for $\kappa>0$ and $ \xi\in C_c^\infty(U)$. Then $\phi\in W^{1,\,2}_c(U)$. 
By \eqref{identityIII}, we write 
\begin{align*}\mathbb I_\epsilon(\phi)&= \int_{U}  |D|Du^\epsilon|(z)|^2 \phi \, dx +\ez\int_{U}  \frac{(\Delta u^\ez)^2}{|Du^\ez|^2} \phi \, dx\\
&=\int_{U}  |D|Du^\epsilon|(z)|^2 (|Du^{\ez}| ^2+\kappa)^{  \alpha-1  } \xi^2 \, dx +\ez\int_{U}  \frac{(\Delta u^\ez)^2}{|Du^\ez|^2} (|Du^{\ez}| ^2+\kappa)^{  \alpha-1 } \xi^2\, dx.
\end{align*}
Since 
$$ |D|Du^\epsilon|(z)|^2  \ge  |D^2 u^{\ez} Du^{\ez}|^2 (|Du^{\ez}| ^2+\kappa)^{-1} \quad {\rm and } \quad\frac{(\Delta u^\ez)^2}{|Du^\ez|^2}\ge (\Delta u^\ez)^2(|Du^{\ez}| ^2+\kappa)^{-1}  $$
almost everywhere we obtain
\begin{equation}
\label{inequ} 
\mathbb I_\epsilon(\phi)  \ge \int_{U}  |D^2u^\epsilon Du^\epsilon|^2 (|Du^{\ez}| ^2+\kappa)^{  \alpha-2  } \xi^2 \, dx +\ez\int_{U}  (\Delta u^\ez)^2 (|Du^{\ez}| ^2+\kappa)^{  \alpha-2 } \xi^2\, dx.
\end{equation}
 On the other hand, note that $$  \phi_i=  2(\alpha-1)(|Du^{\ez}| ^2+\kappa)^{\alpha-2}  u^{\ez}_{ik}u^{\ez}_k\xi^2+ 2\xi \xi_i (|Du^{\ez}| ^2+\kappa)^{\alpha-1} .$$
Pluging  again  $ \phi$ in \eqref{functional}, we obtain a second expression for $I_\epsilon$,  
\begin{align*}
 \mathbb I_\epsilon(\phi)&= \frac12\int_{U} [\Delta u^\ez u^\ez_i\phi_i  - u^\ez_{ij} u^\ez_j \phi_i]\,dx \\
& = - (\alpha-1)\int_{U}  (|Du^{\ez}| ^2+\kappa)^{\alpha-2}|D^2 u^{\ez} Du^{\ez}|^2\xi^2\,dx\\
 &\quad-\int_{U}(|Du^{\ez}| ^2+\kappa)^{\alpha-1} u^{\ez}_{ij}u^{\ez}_j\xi_i \xi\\
&\quad +(\alpha-1) \int_{U}(|Du^{\ez}| ^2+\kappa)^{\alpha-2} \Delta u^{\ez} u^{\ez}_{ik}u^{\ez}_i u^{\ez}_k \xi^2 \,dx\\
&\quad+  \int_{U}(|Du^{\ez}| ^2+\kappa)^{\alpha-1} \Delta u^{\ez} u^{\ez}_i\xi_i   \xi \, dx.
\end{align*}

Replacing $\Delta_\fz u^\ez=u^{\ez}_{ik}u^{\ez}_ku^\ez_i$ by $-\epsilon\Delta u^\ez$ in the third term - which we may since $u^\varepsilon$ satisfies \eqref{infty ez}, we further have
$$(\alpha-1) \int_{U}(|Du^{\ez}| ^2+\kappa)^{\alpha-2} \Delta u^{\ez} u^{\ez}_{ik}u^{\ez}_i u^{\ez}_k \xi^2 \,dx= 
-(\alpha-1)\ez \int_{U}(|Du^{\ez}| ^2+\kappa)^{\alpha-2} (\Delta u^{\ez})^2   \xi^2 \,dx.$$
Taking into account \eqref{inequ}  we conclude that

\begin{align}
& \alpha \int_{U}  (|Du^{\ez}| ^2+\kappa)^{\alpha-2}|D^2 u^{\ez} Du^{\ez}|^2\xi^2\,dx +\alpha \ez\int_{U}  \frac{(\Delta u^\ez)^2}{|Du^\ez|^2} (|Du^{\ez}| ^2+\kappa)^{  \alpha-1 } \xi^2\, dx \nonumber\\
& \quad\le  -\int_{U}  u^{\ez}_{ij}u^{\ez}_j\xi_i(|Du^{\ez}| ^2+\kappa)^{\alpha-1} \xi^2\,dx +\int_U   \Delta u^{\ez} u^{\ez}_i\xi_i(|Du^{\ez}| ^2+\kappa)^{\alpha-1} \xi  \, dx. \label{equ 1}
\end{align}
For the second term of the right hand side of \eqref{equ 1}, via integration by parts we have
\begin{align}
   \int_{U}  \Delta u^{\ez} u^{\ez}_i\xi_i (|Du^{\ez}| ^2+\kappa)^{\alpha-1} \xi \, dx  
=-  \int_{U}  u^{\ez}_k (u^{\ez}_i\xi_i (|Du^{\ez}| ^2+\kappa) ^{\alpha-1} \xi)_k \, dx\hspace{-10cm} &  \nonumber\\
&=  -\int_{U}   u^{\ez}_{ik} u^{\ez}_k \xi_i (|Du^{\ez}| ^2+\kappa) ^{\alpha-1}\xi\,dx-\int_{U} (u_i^{\ez} \xi_i)^2(|Du^{\ez}| ^2+\kappa)^{\alpha-1}\,dx\nonumber\\
&\quad  - \int_{U}\xi_{ik} u^{\ez}_k u^{\ez}_i (|Du^{\ez}| ^2+\kappa)^{\alpha-1}  \xi\,dx-  (\alpha-1)\int_U u^{\ez}_i \xi_i u^{\ez}_{jk} u^{\ez}_j u^{\ez}_k (|Du^{\ez}| ^2+\kappa)^{\alpha-2} \xi  \, dx. \label{equ x2}
\end{align}
 For the sum of the two first term of the right hand side of \eqref{equ 1} and \eqref{equ x2}, via integration by parts we have 
\begin{align}
-2\int_{U}   u^{\ez}_{ij}u^{\ez}_j\xi_i(|Du^{\ez}| ^2+\kappa)^{\alpha-1} \xi \, dx
& =  -\frac2{\alpha }\int_{U}   [(|Du^{\ez}| ^2+\kappa)^{\alpha }]_i \xi_i\xi \, dx\nonumber\\
&=  \frac2{\alpha }\int_{U}    (|Du^{\ez}| ^2+\kappa)^{\alpha }  (|D \xi|^2 +\Delta\xi\, \xi) \, dx\nonumber\\
 &\le  %\eta \int_{U} (|Du^{\ez}| ^2+\kappa)^{\alpha-2} |D^2 u^{\ez}  Du^{\ez}|^2\xi^4\,dx\\&\quad\quad  + C(\eta)
\frac2{\alpha } \int_{U}  (|Du^{\ez}| ^2+\kappa)^{\alpha }(|D\xi|^2+ |D^2 \xi|\, |\xi|)\, dx.\label{equ x1}\end{align}
Observing the fact that $ -\xi_{ik} u^{\ez}_k u^{\ez}_i \le |Du^\ez|^2|D^2\xi|$, for the third term  in the right hand side of \eqref{equ x2}  we have 
$$ - \int_{U}\xi_{ik} u^{\ez}_k u^{\ez}_i (|Du^{\ez}| ^2+\kappa)^{\alpha-1}  \xi\,dx\le  
C\int_{U}  (|Du^{\ez}| ^2+\kappa)^{\alpha } |D^2\xi| |\xi| \,dx.$$
Noting 
$$u^{\ez}_i \xi_iu^{\ez}_{jk} u^{\ez}_j u^{\ez}_k\le |D^2u^\ez Du^\ez||Du^\ez|^2|D\xi|$$
 and applying Young's inequality,  for the forth term  in the right hand side of \eqref{equ x2} we  obtain  
\begin{align*}
&- (\alpha-1) \int_U u^{\ez}_i \xi_i u^{\ez}_{jk} u^{\ez}_j u^{\ez}_k (|Du^{\ez}| ^2+\kappa)^{\alpha-2} \xi^3  \, dx\\
&\quad\le \eta \int_U   |D^2u^\ez Du^\ez|^2 (|Du^{\ez}| ^2+\kappa)^{\alpha-2} \xi^2  \, dx+ 
  \frac{\alpha-1}{4\eta} \int_U    (|Du^{\ez}| ^2+\kappa)^{\alpha} |D\xi|^2  \, dx.
\end{align*}
for $\eta>0$. We collect all the estimates starting from \eqref{equ x2} 
\begin{align}
  &\int_{U} \Delta u^{\ez} u^{\ez}_i\xi_i (|Du^{\ez}|^2+\kappa)^{\alpha-1} \xi \, dx \nonumber  \\
&\quad
 \le\eta \int_U   |D^2u^\ez Du^\ez|^2 (|Du^{\ez}| ^2+\kappa)^{\alpha-2} \xi^2  \, dx\nonumber\\
&\quad\quad  +C(\eta,\alpha) \int_{U}  (|Du^{\ez}| ^2+\kappa)^{\alpha }(|D\xi|^2+  |D^2 \xi|\xi)\, dx\nonumber 
\end{align}
We use the estimate with  $\eta=\alpha/2$ and  arrive at 
\begin{align}
&  \int_{U}  (|Du^{\ez}| ^2+\kappa)^{\alpha-2}|D^2 u^{\ez} Du^{\ez}|^2\xi^4\,dx    + \ez \int_{U}(|Du^{\ez}| ^2+\kappa)^{\alpha-2}( \Delta u^{\ez} )^2  \xi^4 \,dx\nonumber\\
& \quad\le   C(\alpha) \int_{U}  (|Du^{\ez}| ^2+\kappa)^{\alpha }(|D\xi|^2+ |D^2 \xi|\xi)\xi^2\, dx\nonumber
\end{align}
 If  $\alpha\ge  2$  we conclude, by letting $\kappa \to 0 $, that 
\begin{align*}&  \int_{U}   |D^2u^{\ez} Du^{\ez}|^2  |Du^{\ez}|^  { 2\alpha-4} \xi^4    \,dx + \ez \int_{U} (\Delta u^{\ez})^2  |Du^{\ez}|^  {2\alpha-4}  \xi^4   \,dx\\
&\quad\le   C(\alpha)\int_{U}|Du^{\ez}| ^{2\alpha } \xi^2(|D\xi|^2 +|D^2\xi|\xi)\,dx. 
\end{align*}
Since $$|D|Du^\epsilon| ^{\alpha  }| = \alpha   |Du^\epsilon| ^{ \alpha-2}|D^2u^\epsilon Du^\epsilon| ,$$ 
we obtain the desired result. 

If  $\alpha< 2$ then $(\Delta u^{\ez}(z))^2  |Du^{\ez}(z)|^{2\alpha-4}$ is well-defined by Remark \ref{remDelta0}.  Since   $Du^{\ez}(z)=0$  implies $\Delta u^\ez (z)=0$, we  have  
$$(\Delta u^{\ez} )^2  |Du^{\ez} |^  {2\alpha-4}=\lim_{\kappa\to0}(|Du^{\ez}| ^2+\kappa)^{\alpha-2}( \Delta u^{\ez} )^2 $$
almost everywhere in $U$. 
Moreover, note that 
$$ (|Du^{\ez}| ^2+\kappa)^{\alpha-2}|D^2 u^{\ez} Du^{\ez}|^2=\frac1{\alpha^2} |D  (|Du^{\ez}| ^2+\kappa)^{\alpha/2}) |^2.$$
 Choosing suitable functions $\xi$ we deduce that  
$(|Du^{\ez}| ^2+\kappa)^{\alpha/2} \in W^{1,2}_\loc(U)$ with a uniform bound  for  $\kappa\in(0,1)$ on compact sets. 
Since $(|Du^{\ez}| ^2+\kappa)^\alpha\to  |Du^{\ez}|   ^{\alpha }$ almost everywhere as $\kappa\to 0$, we deduce  that $|Du^{\ez}|^{\alpha }\in W^{1,2}_\loc(U)$ and this convergence is indeed weakly in $W^{1,2}_\loc(U)$.
Therefore we conclude 
\begin{align*}
& \int_{U}  |D|Du^\epsilon| ^{\alpha }|^2\xi^2 \,dx +
 {\ez} \int_U|Du^{\ez}|^  {2\alpha-4}  |\Delta u^{\ez}|^2 \xi^2\,dx\\
&\quad\le \lim_{\kappa\to0}\left\{ \int_{U}  |D(|Du^\epsilon| ^2+\kappa)^{\alpha/2 }|^2\xi^2 \,dx +
 {\ez}   \int_U(|Du^{\ez}|^2+\kappa) ^ { \alpha-2}  |\Delta u^{\ez}|^2 \xi^2\,dx\right\}\\
&\quad\le C (\alpha)\int_{U}|Du^{\ez}| ^{2\alpha }(|D\xi|^2 +|D^2\xi|\, |\xi|)\,dx 
\end{align*}
as desired.  
\end{proof}

We then show  the following uniform flatness estimate. 
\begin{lem}\label{flat}
For any $\xi \in C^{\infty}_c(U)$ and linear function $P$, we have
\begin{align*}
&\int_{U} (|Du ^{\ez}|^2- \langle DP,Du^\epsilon\rangle  )^2\xi^2 \,dx\\
&\quad\le \left[
\int_{U} |Du^{\ez}|^4(|D\xi|^2 +|D^2\xi||\xi|)\,dx\right]^{1/2}\left[  \int_U |u^\epsilon-P    |^2(|DP|^2+|Du^\epsilon | ^2)\xi^2\,dx \r.\\
&\quad\quad+
 \left. \int_U  |u^\epsilon-P|^4(|D\xi |^2+|D^2\xi ||\xi|) \,dx \right]^{1/2} , 
\end{align*}
where the constant $C$ is absolute.
\end{lem}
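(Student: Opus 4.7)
The plan is to test the functional $\mathbb{I}_\epsilon(\phi) = \int_U (-\det D^2 u^\epsilon)\phi\,dx$ with $\phi = v^2\xi^2$, where $v := u^\epsilon - P$, and to isolate $\int_U A^2\xi^2\,dx$ on the left, with
$$A := |Du^\epsilon|^2 - \langle DP, Du^\epsilon\rangle = \langle Du^\epsilon, Dv\rangle.$$
Nonnegativity $\mathbb{I}_\epsilon(v^2\xi^2)\ge 0$ is given by $-\det D^2 u^\epsilon\ge 0$ (Lemma~\ref{I}), and the integration-by-parts identity \eqref{functional} supplies the algebraic engine.

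First, I expand $\mathbb{I}_\epsilon(v^2\xi^2)$ via \eqref{functional}: with $\phi_i = 2vv_i\xi^2 + 2v^2\xi\xi_i$ and the rearrangement $u^\epsilon_{ij}u^\epsilon_j v_i = \Delta_\infty u^\epsilon - \langle DP, D^2u^\epsilon Du^\epsilon\rangle$, a direct computation gives
\begin{align*}
\mathbb{I}_\epsilon(v^2\xi^2) = \int_U v\xi^2[\Delta u^\epsilon A - \Delta_\infty u^\epsilon + \langle DP, D^2u^\epsilon Du^\epsilon\rangle]\,dx + \int_U v^2\xi\,\vec W\cdot D\xi\,dx,
\end{align*}
with $\vec W := \Delta u^\epsilon\,Du^\epsilon - D^2u^\epsilon\,Du^\epsilon$. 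A further IBP on $\int v\,A\,\Delta u^\epsilon\,\xi^2\,dx$, using $\Delta u^\epsilon = \partial_i u^\epsilon_i$ and the computed identity $u^\epsilon_i A_i = 2\Delta_\infty u^\epsilon - \langle DP, D^2u^\epsilon Du^\epsilon\rangle$, turns this term into $-\int A^2\xi^2\,dx$ plus lower-order pieces. Combined with $\mathbb{I}_\epsilon\ge 0$, I arrive at
$$\int_U A^2\xi^2\,dx \le T_1 + T_2 + T_3 + T_4,$$
where $T_1 = -3\int v\Delta_\infty u^\epsilon\xi^2\,dx$, $T_2 = 2\int v\langle DP, D^2u^\epsilon Du^\epsilon\rangle\xi^2\,dx$, $T_3 = -2\int vA\langle Du^\epsilon, D\xi\rangle\xi\,dx$, and $T_4 = \int v^2\xi\,\vec W\cdot D\xi\,dx$.

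The key move is to bound $T_1$ and $T_2$ directly via Cauchy--Schwarz and Lemma~\ref{uniform 1}. Using $\Delta_\infty u^\epsilon = \langle Du^\epsilon, D^2u^\epsilon Du^\epsilon\rangle$ and the pointwise identity $|D^2 u^\epsilon Du^\epsilon|^2 = \tfrac14 |D|Du^\epsilon|^2|^2$ together with Lemma~\ref{uniform 1} at $\alpha = 2$,
$$\int_U |D^2 u^\epsilon Du^\epsilon|^2\xi^2\,dx \le C\int_U |Du^\epsilon|^4(|D\xi|^2 + |D^2\xi||\xi|)\,dx,$$
Cauchy--Schwarz yields
$$|T_1|+|T_2| \le C\Bigl[\int|Du^\epsilon|^4(|D\xi|^2 + |D^2\xi||\xi|)\Bigr]^{1/2}\Bigl[\int v^2(|Du^\epsilon|^2+|DP|^2)\xi^2\Bigr]^{1/2},$$
which already mat
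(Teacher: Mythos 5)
Your approach is essentially identical to the paper's: test $\mathbb{I}_\epsilon$ with $\phi = (u^\epsilon - P)^2\xi^2$, use $-\det D^2 u^\epsilon \ge 0$, expand via \eqref{functional}, and integrate the $\Delta u^\epsilon\, u^\epsilon_i v_i$ term by parts to extract $-\int A^2\xi^2\,dx$. The algebra you carry out is correct (I verified $u^\epsilon_i A_i = 2\Delta_\infty u^\epsilon - \langle DP, D^2u^\epsilon Du^\epsilon\rangle$ and the $T_1,\dots,T_4$ decomposition), and the Cauchy--Schwarz treatment of $T_1, T_2$ using the Caccioppoli estimate for $\int_U |D^2 u^\epsilon Du^\epsilon|^2\xi^2\,dx$ matches the paper (you cite Lemma~\ref{uniform 1} at $\alpha=2$, but the form with $\xi^2$ inside is Lemma~\ref{cacciopoli ez}; the paper invokes the latter explicitly).

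However, the write-up stops mid-sentence and never handles $T_3$ and $T_4$. These are not trivial leftovers: $T_3 = -2\int vA\langle Du^\epsilon, D\xi\rangle\xi\,dx$ contains $A$ and must be absorbed via Young's inequality, giving up a fraction of $\int A^2\xi^2$ on the left; and $T_4 = \int v^2\xi\,\vec W\cdot D\xi\,dx$ still contains $\Delta u^\epsilon$, so it requires a \emph{second} integration by parts (exactly as in the paper's treatment of the $(\mathrm{II})$-term in \eqref{laplace}), which in turn produces \emph{another} $A$-weighted term that also needs Young's absorption, plus Cauchy--Schwarz on the remaining pieces. Until you carry these out and show the absorbed fraction of $\int A^2\xi^2$ is strictly less than $1$, the claimed inequality is not established. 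The ingredients are all there and nothing would go wrong, but as it stands the proof is incomplete.
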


Lemma~\ref{uniform 2}   follows from Lemma \ref{flat}  via suitable 
choice of $\xi$.

\begin{proof}[Proof of Lemma \ref{flat}.] Without loss of generality, we may assume that
$P(x)= cx_2$. Then  $ |c|= |DP|$, $DP=c{\bf e}_2$ and $\langle Du^\epsilon  , DP\rangle= cu^\epsilon _2$. Let $\phi= (u^\epsilon-cx_2)^2  \xi^2\in W_c^{1,2}(U)$. Since $-\det D^2u^\ez\ge 0$ by Lemma \ref{I}, we have 
 \begin{align*}\mathbb I_\epsilon(\phi)&= \int_{U}   (-\det D^2u^\epsilon) \phi \, dx\ge 0.
  \end{align*}
%\begin{align*}\mathbb I_\epsilon(\phi)&= \int_{U}  |D|Du^\epsilon|(z)|^2 \phi \, dx +\ez\int_{U}  \frac{(\Delta u^\ez)^2}{|Du^\ez|^2} \phi \, dx\\
%&=\int_{U}  |D|Du^\epsilon|(z)|^2(u^\epsilon-cx_2)^2 \xi^4 \, dx +\ez\int_{U}  \frac{(\Delta u^\ez)^2}{|Du^\ez|^2} (u^\epsilon-cx_2)^2\xi^4\, dx.
%\end{align*}
Now 
$$ \phi_i =2 (u^\epsilon_i-c\delta_{2i})  \xi^2+2(u- cx_2)^2\xi\xi _i. $$
By  \eqref{functional},  we obtain 
\begin{align}
 \mathbb I_\epsilon (\phi)
&= \frac12\int_{U} [\Delta u^\ez u^\ez_i\phi_i  - u^\ez_{ij} u^\ez_j \phi_i]\,dx \\
&=  \int_{U} \Delta u^\ez u^\ez_i  (u^\epsilon_i-c\delta_{2i})  \xi^2 (u^\epsilon-cx_2) \,dx +  \int_{U}\Delta u^\ez u^\ez_i \xi_i\xi (u^\epsilon-cx_2)^2\,dx \nonumber
    \\
& \quad - \int_U u^\ez_{ij} u^\ez_j (u^\epsilon_i-c\delta_{2i})  \xi^2 (u^\epsilon-cx_2)\,dx - \int_U u^\ez_{ij} u^\ez_j \xi _i\xi (u^\epsilon-cx_2)^2 
   \,dx. \label{ineq}
 \end{align}
We apply the Cauchy-Schwartz inequality to the  third and forth terms in the right hand side of \eqref{ineq} 
\begin{align}
& -\int_{U}    u^\ez_{ij} u^\ez_j (u^\epsilon_i-c\delta_{2i})  \xi^2 (u^\epsilon-cx_2)\,dx - \int_U u^\ez_{ij} u^\ez_j \xi _i\xi (u^\epsilon-cx_2)^2 \,dx\nonumber\\
& \quad\le  \left[\int_U|D^2u^\epsilon Du^\epsilon|^2\xi^2\,dx\right]^{1/2} 
%\nonumber\\ &\quad\quad\times 
\left[
 \int_U(u^\epsilon-cx_2)^2(|c| +    |Du|)^2 \xi^2\,dx  + \int_U(u^\epsilon-cx_2)^4 |D\xi|^2 \,dx  \right]^{1/2}
%\\
%&\quad\le   \left[
%\int_{U} |Du^{\ez}|^4(|D\xi|^2 +|D^2\xi|\xi)\,dx  \right]^{1/2}\nonumber\\
%&\quad\quad\times\left[  \int_U |u^\epsilon- P   |^2(|c|^2+|Du^\epsilon | ^2)\xi^2\,dx 
%+  \int_U  |u^\epsilon- P  |^4 |D \xi|^2  \,dx \right]   ^{1/2} .
\label{ineq 1}
\end{align} 
Keep in mind below that  $u^\epsilon_i  (u^\epsilon_i-c\delta_{2i}) = (|Du^\epsilon|^2-c u^\epsilon_2)$. 
Then via integration by parts, we  write the first  term  in the right hand side of \eqref{ineq} as  
\begin{align*}
 \int_{U} \Delta u^\ez u^\ez_i  (u^\epsilon_i-c\delta_{2i})  \xi^2 (u^\epsilon-cx_2)\,dx 
& =
-\int_U u^\ez_i[ (|Du^\epsilon|^2-c u^\epsilon_2)\xi^2 (u^\epsilon-cx_2) ]_i\,dx\\
& = -\int_U   (|Du^\epsilon|^2-c u^\epsilon_2)^2\xi^2  \,dx   \\
&\quad
-\int_U u^\ez_i(|Du^\epsilon|^2-c u^\epsilon_2)_i \xi^2 (u^\epsilon-cx_2) \,dx\\
&\quad  -2\int_U u^\ez_ i (|Du^\epsilon|^2-c u^\epsilon_2) \xi _i  \xi(u^\epsilon-cx_2) \,dx.
\end{align*}
By  the Cauchy-Schwartz inequality, 
\begin{align*}&-\int_U u^\ez_i (|Du^\epsilon|^2-c u^\epsilon_2)_i \xi^2 (u^\epsilon-cx_2) \,dx\\
&\quad= -\int_U 2 u^\ez_iu^\ez_{ij}u^\epsilon_j \xi^2 (u^\epsilon-cx_2)\,dx
+ c \int_U u^\epsilon_{2i}u^\epsilon_i \xi^2 (u^\epsilon-cx_2)\,dx \\
&\quad\le \left[\int_U|D^2u^\epsilon Du^\epsilon|^2\xi^2\,dx\right]^{1/2}  \left[ \int_U(u^\epsilon-cx_2)^2(|c| +    |Du^\ez|)^2 \xi^2\,dx \right]^{1/2}
\end{align*}
and 
\begin{align*}&-2 \int_U u^\ez_ i (|Du^\epsilon|^2-c u^\epsilon_2) \xi_i \xi(u^\epsilon-cx_2) \,dx
\\ &\quad
\le 2  \left[\int_U (u_i^\epsilon \xi_i)^2|Du^\epsilon|^2\,dx\right]^{1/2}
 \left[\int_U (u^\epsilon-cx_2)^2( |c|+|Du^\epsilon|)^2\xi^2\,dx\right]^{1/2}. 
\end{align*}

Therefore, using  Lemma \ref{cacciopoli ez}  to estimate $\int_U |D^2u^\epsilon Du^\epsilon|\xi^2\, dx $ we arrive at   
\begin{align}
&\int_{U} \Delta u^\ez u^\ez_i  (u^\epsilon_i-c\delta_{2i})  \xi^2 (u^\epsilon-cx_2)\le  -\int_U   (|Du^\epsilon|^2-c u^\epsilon_2)^2\xi^2  \,dx  \nonumber  \\
&\quad\quad+C\left[\int_{U} |Du^{\ez}|^4(|D\xi|^2 +|D^2\xi||\xi|)\,dx\right]^{1/2}\left[\int_U (u^\epsilon-cx_2)^2( |c|+|Du^\epsilon|)^2 \xi^2\,dx\right]^{1/2} . \label{ineq 2}
\end{align}

Finally, again via integration by parts, for the second  term  in the right hand side of \eqref{ineq} we have  
\begin{align}
\hspace{2.5cm} & \hspace{-2.5cm} \int_{U} \Delta u^\ez u^\ez_i \xi_i\xi (u^\epsilon-cx_2)^2   \,dx =-\int_{U} u^\epsilon_j  [ u^\ez_i \xi _i\xi (u^\epsilon-cx_2)^2]_j\,dx \nonumber \\
&=-\int_{U} (u_i^\epsilon \xi_i)^2  (u^\epsilon-cx_2)^2 \,dx - \int_{U} u^\epsilon_j   u^\ez_{ij} \xi _i\xi (u^\epsilon-cx_2)^2 \,dx\nonumber \\
&\quad  -\int_{U} u^\epsilon_j   u^\ez_i \xi _{ij}\xi (u^\epsilon-cx_2)^2 \,dx-2\int_{U} (|Du^\epsilon|^2-cu^\epsilon_2 )   u^\ez_i \xi _i\xi (u^\epsilon-cx_2) \,dx. 
\label{laplace} \end{align}
By the Cauchy-Schwartz  inequality,  we have 
\begin{align*}
- \int_{U} u^\epsilon_j   u^\ez_{ij} \xi _i\xi (u^\epsilon-cx_2)^2 \,dx\le   \left[\int_U|D^2u^\epsilon Du^\epsilon|^2\xi^2\,dx\right]^{1/2}  \left[ \int_U(u^\epsilon-cx_2)^4|D\xi|^2 \,dx \right]^{1/2},
\end{align*}
\begin{align*}
-\int_{U} u^\epsilon_j   u^\ez_i \xi _{ij}\xi (u^\epsilon-cx_2)^2 \,dx\le  \left[\int_U|  Du^\epsilon|^4|D^2\xi| |\xi|\,dx\right]^{1/2}  \left[ \int_U(u^\epsilon-cx_2)^4|D^2\xi| |\xi|\,dx \right]^{1/2},
\end{align*}
and 
\begin{align*}
&-\int_{U} (|Du^\epsilon|^2-cu^\epsilon_2 )   u^\ez_i \xi _i\xi (u^\epsilon-cx_2) \,dx\\
&\quad\le 
\frac14 \int_{U} (|Du^\epsilon|^2-cu^\epsilon_2 )^2    \xi^2 \,dx+  
4 \int_{U}   |Du^\ez|^2  |D\xi|^2 (u^\epsilon-cx_2) ^2\,dx\\
&\quad\le \frac14\int_{U} (|Du^\epsilon|^2-cu^\epsilon_2 )^2    \xi^2 \,dx+4   \left[\int_U| Du^\epsilon|^4|D \xi|^2 \,dx\right]^{1/2}  \left[ \int_U(u^\epsilon-cx_2)^4|D\xi|^2 \,dx\right]^{1/2}. 
\end{align*}
Thus (since the first term on the right hand side of \eqref{laplace} can easily be estimated) 
\begin{align}
&\int_{U} \Delta u^\ez u^\ez_i \xi_i\xi (u^\epsilon-cx_2)^2   \,dx \le \frac14 \int_U   (|Du^\epsilon|^2-c u^\epsilon_2)^2\xi^2  \,dx    \nonumber\\
&\quad\quad+C \left[\int_{U} |Du^{\ez}|^4(|D\xi|^2 +|D^2\xi||\xi|)\,dx\right]^{1/2}\left[\int_U (u^\epsilon-cx_2)^4( |D\xi|^2\xi^2+|D^2\xi||\xi|)\,dx \right]^{1/2}.  \label{ineq 3}
\end{align}

Combining \eqref{ineq} together with \eqref{ineq 1}, \eqref{ineq 2} and \eqref{ineq 3}, we complete   the proof of Lemma \ref{flat}. 
\end{proof}

\begin{lem}\label{better0}
Let $\alpha> 0$.   For any $\kappa>0$  and  $\xi \in C^{\infty}_c(U)$, we have
 \begin{align*} 
\hspace{2cm} & \hspace{-2cm}  \int_{U} [  ( |Du^{\ez}|  ^2+\kappa)|\xi|^2]^{  \alpha+1 }    \, dx  + \epsilon \alpha \int_U (|Du^{\ez}| ^2+\kappa)^{\alpha-2} ( \Delta u^{\ez})^2  |u^\ez|^2 \xi^2\,dx
\\ &  \le     C(\alpha )   \int_{U}     |u^\ez| ^{2 \alpha+2 }  (|D\xi|^2 +|D^2\xi||\xi|)^{\alpha+1}  \, dx\\
 &\quad+  \tilde C(\alpha) \epsilon   \int_U      (|Du^{\ez}| ^2+\kappa)^{\alpha-1} |u^\ez|^2    |D\xi|^2  \xi^{2\alpha}   \, dx\\
 &\quad+  (8\kappa+ \tilde C(\alpha)\epsilon)  \int_U   (|Du^{\ez}|^2+\kappa)^{\alpha}    |\xi|^{2(\alpha+1)}  \, dx.
\end{align*} 
\end{lem}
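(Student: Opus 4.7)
The test function is the one suggested by the paragraph preceding the statement: take $\phi=(|Du^{\ez}|^{2}+\kappa)^{\alpha-1}|u^{\ez}|^{2}\xi^{2(\alpha+1)}$, which lies in $W^{1,2}_c(U)$ since $u^{\ez}\in C^{\infty}(U)$ and $\xi\in C^{\infty}_c(U)$. Feeding this into \eqref{functional} gives $\mathbb{I}_{\ez}(\phi)=\frac{1}{2}\int_U[\Delta u^{\ez} u^{\ez}_i-u^{\ez}_{ij}u^{\ez}_j]\phi_i\,dx$, and Lemma~\ref{I} guarantees $\mathbb{I}_{\ez}(\phi)=\int_U(-\det D^{2}u^{\ez})\phi\,dx\ge 0$. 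Splitting $\phi_i$ by the product rule into the three contributions coming from $\partial_i(|Du^{\ez}|^{2}+\kappa)^{\alpha-1}$, from $\partial_i|u^{\ez}|^{2}=2u^{\ez} u^{\ez}_i$, and from $\partial_i\xi^{2(\alpha+1)}$, and systematically using $u^{\ez}_iu^{\ez}_{ij}u^{\ez}_j=\Delta_{\fz}u^{\ez}=-\ez\Delta u^{\ez}$, the first contribution equals $-(\alpha-1)\int(|Du^{\ez}|^{2}+\kappa)^{\alpha-2}[|D^{2}u^{\ez} Du^{\ez}|^{2}+\ez(\Delta u^{\ez})^{2}]|u^{\ez}|^{2}\xi^{2(\alpha+1)}\,dx$, the second equals $\int(|Du^{\ez}|^{2}+\kappa)^{\alpha-1}u^{\ez}\,\Delta u^{\ez}(|Du^{\ez}|^{2}+\ez)\xi^{2(\alpha+1)}\,dx$, and the third is a boundary-type term $(\alpha+1)\int(|Du^{\ez}|^{2}+\kappa)^{\alpha-1}|u^{\ez}|^{2}\xi^{2\alpha+1}[\Delta u^{\ez} u^{\ez}_i\xi_i-u^{\ez}_{ij}u^{\ez}_j\xi_i]\,dx$.

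\textbf{Extracting the main LHS term.} The desired $(|Du^{\ez}|^{2}+\kappa)^{\alpha+1}\xi^{2(\alpha+1)}$ piece is produced by a second integration by parts applied to the $\Delta u^{\ez}|Du^{\ez}|^{2}$ subterm of the second contribution above: writing $\Delta u^{\ez}=(u^{\ez}_j)_j$ and moving the divergence onto $u^{\ez}(|Du^{\ez}|^{2}+\kappa)^{\alpha-1}|Du^{\ez}|^{2}\xi^{2(\alpha+1)}$, one gets $-\int|Du^{\ez}|^{4}(|Du^{\ez}|^{2}+\kappa)^{\alpha-1}\xi^{2(\alpha+1)}\,dx$ together with three secondary terms: two further $\ez\Delta u^{\ez}$-pieces (from $\partial_j(|Du^{\ez}|^{2}+\kappa)^{\alpha-1}$ and from $u^{\ez}_ju^{\ez}_{jk}u^{\ez}_k=-\ez\Delta u^{\ez}$), and a $\xi_j$-cutoff correction. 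The elementary identity $|Du^{\ez}|^{4}(|Du^{\ez}|^{2}+\kappa)^{\alpha-1}\ge (|Du^{\ez}|^{2}+\kappa)^{\alpha+1}-2\kappa(|Du^{\ez}|^{2}+\kappa)^{\alpha}$ then isolates the main LHS term, with the $2\kappa$-correction feeding the $8\kappa\int(|Du^{\ez}|^{2}+\kappa)^{\alpha}\xi^{2(\alpha+1)}\,dx$ on the RHS.

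\textbf{Closing the estimate.} All remaining cross terms are controlled by Cauchy--Schwarz and Young's inequality with a small parameter $\eta=\eta(\alpha)$. The $\xi_i$-cross terms containing $u^{\ez}_{ij}u^{\ez}_j$ split into an $\eta\int(|Du^{\ez}|^{2}+\kappa)^{\alpha-2}|D^{2}u^{\ez} Du^{\ez}|^{2}|u^{\ez}|^{2}\xi^{2(\alpha+1)}\,dx$ piece — absorbed into the $(1-\alpha)|D^{2}u^{\ez} Du^{\ez}|^{2}$-term of step one when $\alpha<1$, and into the Caccioppoli bound of Lemma~\ref{cacciopoli ez} when $\alpha\ge 1$ — and a cutoff remainder whose Young rearrangement with weights $|Du^{\ez}|^{2(\alpha+1)\frac{\alpha}{\alpha+1}}$ versus $|u^{\ez}|^{2(\alpha+1)}$ yields the first RHS term $C(\alpha)\int|u^{\ez}|^{2\alpha+2}(|D\xi|^{2}+|D^{2}\xi||\xi|)^{\alpha+1}\,dx$. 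The $\ez u^{\ez}\Delta u^{\ez}$-type cross terms are split so that the $\ez(\Delta u^{\ez})^{2}|u^{\ez}|^{2}(|Du^{\ez}|^{2}+\kappa)^{\alpha-2}\xi^{2}$ part is partly absorbed into the $\ez\alpha$-term on the LHS and the complementary pieces produce the two $\tilde C(\alpha)\ez$-prefactored quantities on the RHS. The main obstacle is the bookkeeping of signs around the $\ez(\Delta u^{\ez})^{2}$ and $|D^{2}u^{\ez} Du^{\ez}|^{2}$ contributions — particularly for $\alpha\ge 1$, where the direct terms of step one have the unfavourable sign and must be reabsorbed via the identity \eqref{key inequalityIII} of Lemma~\ref{I} — while choosing the Young parameter $\eta$ so that the absorption is uniform in both $\ez\in(0,1]$ and $\kappa>0$.
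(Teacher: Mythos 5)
Your plan matches the paper's argument: same test function $\phi=(|Du^{\ez}|^{2}+\kappa)^{\alpha-1}|u^{\ez}|^{2}\xi^{2(\alpha+1)}$, same splitting of $\phi_i$, same use of $\Delta_\infty u^\ez=-\ez\Delta u^\ez$, same second integration by parts to expose $-\int(|Du^\ez|^2+\kappa)^{\alpha-1}|Du^\ez|^4\xi^{2(\alpha+1)}$ and the elementary inequality peeling off the $\kappa$-correction, and the same Young/H\"older clean-up. The core identity used for absorption is \eqref{key inequalityIII}, as you say at the end.

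Two remarks on bookkeeping that are worth fixing, because in your current phrasing they would not survive a line-by-line write-up. First, the starting point is not merely $\mathbb I_\ez(\phi)\ge 0$: what the paper actually uses is the \emph{equality} between the nonnegative representation $\mathbb I_\ez(\phi)=J_1+J_2$ coming from \eqref{identityIII} and the integration-by-parts representation coming from \eqref{functional}. This matters precisely because $J_1-I_1\ge\alpha\int(|Du^\ez|^2+\kappa)^{\alpha-2}|D^2u^\ez Du^\ez|^2|u^\ez|^2\xi^{2(\alpha+1)}\,dx\ge 0$ and $J_2-I_4\ge\ez\alpha K_2$ hold for \emph{every} $\alpha>0$ and supply the $\ez\alpha K_2$ and the absorbing term on the left with exactly the weight $(|Du^\ez|^2+\kappa)^{\alpha-2}|u^\ez|^2\xi^{2(\alpha+1)}$ that your Young-inequality splittings produce. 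Second, your sign diagnosis is reversed: your ``first contribution'' is $I_1+I_4=-(\alpha-1)\int(\cdots)$, which is $\le 0$ for $\alpha\ge 1$, so that $\mathbb I_\ez(\phi)\ge 0$ alone already yields a positive multiple of $K_2$ in that range (albeit with constant $\alpha-1$ instead of $\alpha$); the genuinely obstructed case is $\alpha\le 1$, where $I_1+I_4\ge 0$ and nonnegativity alone gives no $K_2$ term at all. Finally, Lemma~\ref{cacciopoli ez} is not (and should not be) invoked here: its weight is $\xi^2$ rather than $|u^\ez|^2\xi^{2(\alpha+1)}$, so it would not absorb your $\eta$-term and would pollute the right-hand side; once you use the $J_1-I_1$ surplus as above, the appeal to Lemma~\ref{cacciopoli ez} becomes unnecessary for all $\alpha$.
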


Lemma~\ref{sobolev of u} follows  from Lemma \ref{better0} by choosing a
suitable cut-off functions $\xi$.

\begin{proof}  [Proof of Lemma \ref{better0}]
We write the desired inequality as 
\begin{equation}  K_1  + \varepsilon \alpha K_2
 \le C(\alpha) J + C(\alpha) \varepsilon  E_1   + (8\kappa + \tilde C(\alpha) \epsilon) E_2.  
\end{equation} 
Let $\phi= (|Du^{\ez}| ^2+\kappa)^{  \alpha-1}|u^\ez|^2|\xi|^{2(\alpha+1)}$. 
Then $\phi\in W^{1,\,2}_c(U)$.
By \eqref{identityIII}, we write 
\begin{align*}  \mathbb I_\epsilon(\phi)&=\int_{U}  |D|Du^\epsilon|(z)|^2 (|Du^{\ez}| ^2+\kappa)^{  \alpha-1 }|u^\ez|^2\xi^{2(\alpha+1)} \, dx\\
&\quad   +\ez\int_{U}  \frac{(\Delta u^\ez)^2}{|Du^\ez|^2}(|Du^{\ez}| ^2+\kappa)^{  \alpha-1 }|u^\ez|^2\xi^{2(\alpha+1)}\, dx=:J_1+J_2.
\end{align*}  
We compute the derivative of $\phi$, 
\begin{align*}
\phi_i &=2(\alpha-1) (|Du^{\ez}| ^2+\kappa)^{\alpha-2 } u^{\ez}_{ij}u^{\ez}_j |u^\ez|^2\xi^{2(\alpha+1)}\\
&\quad+ 2(\alpha+1)  |\xi|^{2\alpha}  \xi \xi_i (|Du^{\ez}| ^2+\kappa)^{\alpha-1 } |u^\ez|^2+
 2 (|Du^{\ez}| ^2+\kappa)^{ \alpha-1 }u^\epsilon_i u^\epsilon \xi^{2(\alpha+1)}. 
 \end{align*}
As above we have by  \eqref{functional}, 
\begin{align*}
\mathbb I_\epsilon(\phi)& = -(\alpha-1)\int_{U}  (|Du^{\ez}| ^2+\kappa)^{\alpha-2}|D^2 u^{\ez} Du^{\ez}|^2 |u^\ez|^2|\xi|^{2(\alpha+1)} \,dx\\
&\quad 
 - (\alpha+1) \int_U   (|Du^{\ez}| ^2+\kappa)^{\alpha-1} u^{\ez}_{ij}u^{\ez}_j|\xi|^{2\alpha} \xi_i \xi |u^\ez|^2\,dx\\
&\quad 
 - \int_U  (|Du^{\ez}| ^2+\kappa)^{\alpha-1} u^{\ez}_{ij}u^{\ez}_j   u^\epsilon_i u^\ez  |\xi|^{2(\alpha+1)}  \,dx\\
&\quad +(\alpha-1)\int_U (|Du^{\ez}| ^2+\kappa)^{\alpha-2} \Delta u^{\ez} u^{\ez}_{ij}u^{\ez}_i u^{\ez}_j |u^\ez|^2\xi^{2(\alpha+1)}\,dx\\
 &\quad +(\alpha+1) \int_U     (|Du^{\ez}| ^2+\kappa)^{\alpha-1} \Delta u^{\ez} u^{\ez}_i\xi_i  |\xi|^{2\alpha}  |u^\ez|^2\, dx\\
&\quad+ \int_U   (|Du^{\ez}| ^2+\kappa)^{\alpha-1} \Delta u^{\ez} u^{\ez}_i   u^\epsilon _i u^\ez |\xi|^{2(\alpha+1)}\, dx\\
&= I_1+\cdots+I_6. 
\end{align*}
Notice that
$$ J_1-I_1\ge \alpha \int_{U}  (|Du^{\ez}| ^2+\kappa)^{\alpha-2}|D^2 u^{\ez} Du^{\ez}|^2 |u^\ez|^2|\xi|^{2(\alpha+1)}\,dx\ge 0.$$
Since  $u^{\ez}_{ij}u^{\ez}_ju^\ez_i=\Delta_\infty u^\ez=-\epsilon\Delta u^\ez$, we write $I_4$ as  
\begin{align*}I_4&= -\epsilon(\alpha-1) \int_U (|Du^{\ez}| ^2+\kappa)^{\alpha-2} ( \Delta u^{\ez})^2  |u^\ez|^2|\xi|^{2(\alpha+1)}\,dx.
\end{align*}
Hence, 
$$J_2-I_4\ge   \ez\alpha   K_2.$$
To complete the proof it suffices to show that 
\begin{equation}\label{i2}  I_2\le \frac18   K_1 + C(\alpha )  J \end{equation}  
\begin{equation}\label{i3}  I_3\le \frac18 \varepsilon \alpha K_2 + C(\alpha) \varepsilon E_1   \end{equation}  
\begin{equation}\label{i5}  I_5 \le \frac14    K_1 + \frac14 \varepsilon \alpha  K_2+C(\alpha)J + C(\alpha) \varepsilon E_1   \end{equation} 
and  
\begin{equation}\label{i6} I_6 \le -  \frac78 K_1 + \frac18 \varepsilon \alpha  K_2  + C(\alpha)J   + (C(\alpha) \varepsilon+ 4 \kappa)  E_2. \end{equation}  
% In the sequel of this proof, set 
% $$H:=\gamma(2\gamma-1)   \int_{U}   (|\xi|^2 |Du^{\ez}| ^2+\kappa)^{ \alpha+1 } ( |u^\ez|^2+\tau)^{ \gamma-1 }  \, dx .$$
% and $$K:=\gamma(2\gamma-1) \int_{U}   (|D\xi|^2 +|D^2\xi||\xi|)^{\alpha+1}  (|u^\ez|^2+\tau)^{\alpha+\gamma} \, dx.$$
%Note that  $\gamma(2\gamma-1)  =1$ if $\gamma=1$, and $\gamma(2\gamma-1) =\alpha(1+2\alpha)$ if $\gamma=-\alpha$. 

Below we prove  \eqref{i2} to \eqref{i6} in order.  Recall that 
by H\"older's and   and Young's inequality we have for $\frac1p+\frac1q = 1$,
$1<p,q< \infty$ and $\eta>0$ 
\begin{eqnarray}  \int fg dx\le &&  
\Vert f \Vert_{L^p} \Vert g \Vert_{L^q}  %=     \left( \eta p \Vert  f \Vert_{L^p}^p\right)^{\frac1p}   \left( (\eta p)^{-q/p}  \Vert  g \Vert_{L^q}^q\right)^{\frac1q} \nonumber \\ 
\le   \eta \Vert f \Vert_{L^p}^p +  C(\eta,p) \Vert g \Vert_{L^q}^q,\label{young}
\end{eqnarray}   
which will be used later for different choice of $p$ (and hence $q$)  and $\eta $. 

Write 
\begin{align}I_2&= - (\alpha+1) \int_U   (|Du^{\ez}| ^2+\kappa)^{\alpha-1} u^{\ez}_{ij}u^{\ez}_j|\xi|^{2\alpha} \xi_i \xi |u^\ez|^2\,dx\nonumber
\\ 
&=-\frac{\alpha+1}{2\alpha }\int_{U}  [ (|Du^{\ez}| ^2+\kappa)^{\alpha }]_i \xi_i \xi^{2\alpha-1}  |u^\ez|^2\, dx\nonumber\\
&= \frac{\alpha+1}{2\alpha }\int_{U}  (|Du^{\ez}| ^2+\kappa)^{\alpha } [\xi_i  
\xi^{2\alpha+1}  |u^\ez|^2]_i\, dx\nonumber\\
&\le  \frac{\alpha+1}{2\alpha  }\int_{U} [|\xi|^2  (|Du^{\ez}| ^2+\kappa)  ]^{\alpha }[\Delta \xi \xi +(2\alpha+1) |D\xi|^2 ] |u^\ez|^2  \, dx\nonumber\\
&\quad+ \frac{ (\alpha+1)}{ \alpha  }  \int_{U} 
[|\xi|^2  (|Du^{\ez}|^2+\kappa)  ]^{\alpha+1/2 }
   |D\xi|    |u^\ez|    \, dx.\label{i2x}
\end{align}

Applying \eqref{young} with $\eta = \frac{1}{16 } $ and $p= \frac{\alpha+1}{\alpha}$ in the first term of \eqref{i2x} or 
 $p = \frac{2\alpha+2}{2\alpha+1}$ in the second term  of \eqref{i2x},  we obtain \eqref{i2}, that is, 
 \begin{align*}I_2 
 & \le   \frac1{8} K_1   +  C(\alpha)      J .
 \end{align*}

Replacing $u^{\ez}_{ij}u^{\ez}_ju^\ez_i$ by $-\epsilon\Delta u^\ez$ in $I_3$ and using \eqref{young} with $p=2$ and $\eta=\frac{\alpha}8$, we have 
\begin{align*} I_3&=
 \epsilon \int_U (|Du^{\ez}| ^2+\kappa)^{\alpha-1}  \Delta u^\ez u^\ez   \xi^ {2(\alpha+1) }\,dx\\
& \le \frac18 \varepsilon \alpha K_2  + C(\az)\epsilon  \int_U (|Du^{\ez}| ^2+\kappa)^{\alpha }       \xi^ {2(\alpha+1) }\,dx 
\\ & = \frac18 \varepsilon \alpha K_2  + C(\az)\epsilon  E_1,
\end{align*}
which gives  \eqref{i3}. 

By integration by parts, write 
\begin{align*} I_5%&= \int_U \frac{\alpha+1} {\alpha-1} (|Du^{\ez}| ^2+\kappa)^{\alpha-1} \Delta u^{\ez} u^{\ez}_i\xi_i   \xi^ {2\alpha+1}(u^\epsilon)^2\, dx\\
&= - (\alpha+1)   \int_U    u^{\ez}_j  [(|Du^{\ez}| ^2+\kappa)^{\alpha-1} u^{\ez}_i\xi_i   \xi^ {2\alpha+1}  |u^\ez|^2]_j\, dx\\
&= - { 2(\alpha+1) }(\alpha-1)   \int_U     (|Du^{\ez}| ^2+\kappa)^{\alpha-2} u^{\ez}_j  u^\ez_{js} u^{\ez}_s   u^{\ez}_i \xi_{i} \xi^ {2\alpha+1}  |u^\ez|^2  \, dx\\
&\quad-(\alpha+1)     \int_U    u^{\ez}_j  u^{\ez}_{ij}\xi_i  (|Du^{\ez}| ^2+\kappa)^{\alpha-1}    \xi^ {2\alpha+1}  |u^\ez|^2\, dx\\
&\quad-(\alpha+1)  \int_U    u^{\ez}_j  u^{\ez}_i\xi_{ij}  (|Du^{\ez}| ^2+\kappa)^{\alpha-1}  \xi^ {2\alpha+1}  |u^\ez|^2 \, dx\\
&\quad-{ (\alpha+1)  (2\alpha+1)}  \int_U    u^{\ez}_j  u^{\ez}_i\xi_{i}\xi_j  (|Du^{\ez}| ^2+\kappa)^{\alpha-1}  \xi^ {2\alpha }  |u^\ez|^2 \, dx\\
&\quad-{ 2(\alpha+1)  }\int_U    u^{\ez}_j u^{\ez}_j   (|Du^{\ez}| ^2+\kappa)^{\alpha-1}  u^{\ez}_i \xi_{i} \xi^ {2\alpha+1} u^\epsilon   \, dx\\
&=I_{5,1}+\cdots+I_{5,5}
\end{align*}
Replacing $u^{\ez}_{ js}u^{\ez}_ju^\ez_s=\Delta_\infty u^\ez$ by $-\epsilon\Delta u^\ez$ in $I_{5,1}$ and using  using \eqref{young} with $p=2$ and $\eta=\frac{\alpha}8$, we have 
\begin{align*}I_{5,1}&=  { 2(\alpha+1) }(\alpha-1) \epsilon   \int_U     (|Du^{\ez}| ^2+\kappa)^{\alpha-2}   \Delta u^\ez    u^{\ez}_i \xi_{i} \xi^ {2\alpha+1}  |u^\ez|^2  \, dx\\
&\le \frac18 \varepsilon \alpha   K_2+ C( \alpha)\epsilon   \int_U     (|Du^{\ez}| ^2+\kappa)^{\alpha-1}     | D \xi|^2 \xi^ {2\alpha } |u^\ez|^2  \, dx
\\ &  = \frac18 \varepsilon \alpha K_2 + C(\alpha) \epsilon E_1. 
\end{align*}
Note that $I_{5,2}=I_2$ and hence have the same estimate as $I_2$ above. 
Applying \eqref{young} with $p=\frac{\alpha+1}{\alpha}$ and $\eta=\frac{\gamma(2\gamma-1)}{16}$  we have 
 \begin{align*}
I_{5,3}& =- (\alpha+1)   \int_U    u^{\ez}_j  u^{\ez}_i\xi_{ij}  (|Du^{\ez}| ^2+\kappa)^{\alpha-1}  \xi^ {2\alpha+1} |u^\ez|^2 \, dx
%&\le (\alpha+1)   \int_U     (|Du^{\ez}| ^2+\kappa)^{\alpha}   |D^2\xi| \xi^ {2\alpha+1} |u^\ez|^2 \, dx
 \le   \frac1{16}  K_1   +   C (\alpha )  J.
\end{align*}

Moreover, 
 \begin{align*}
I_{5,4}&=
 - { (\alpha+1)(2\alpha+1)}   \int_U     \langle Du^{\ez}, D\xi\rangle ^2   (|Du^{\ez}| ^2+\kappa)^{\alpha-1}  \xi^ {2\alpha } |u^\ez|^2 \, dx\le 0. 
\end{align*}
By  \eqref{young} with $p=\frac{2\alpha+2}{2\alpha+1}$ and $\eta=\frac1{8}$ we also have 
 \begin{align*}
I_{5,5}%&=-\frac{ 2(\alpha+1) } {\alpha-1} \int_U    u^{\ez}_j u^{\ez}_j   (|Du^{\ez}| ^2+\kappa)^{\alpha-1}  u^{\ez}_i \xi_{i} \xi^ {2\alpha+1} u^\epsilon  \, dx\\
&\le   2(\alpha+1)    \int_U    (|Du^{\ez}| ^2+\kappa)^{\alpha +1/2}   |D\xi| \xi^ {2\alpha+1}   (|u^\ez|^2+\tau )^{\gamma-1/2 } \, dx\le \frac1{16}  K_1  +   C (\alpha )J . 
\end{align*}
Combining the estimates for $I_{5,1}$ to $I_{5,5}$, we conclude   \eqref{i5}. 

By integration by parts, we have 
 \begin{align*}
I_6%&=\frac1 {\alpha-1}\int_U (|Du^{\ez}| ^2+\kappa)^{\alpha-1} \Delta u^{\ez} u^{\ez}_i   u^\epsilon _i u^\ez\xi^ {2(\alpha+1) }\, dx\\
&=-   \int_U u^\epsilon_j [ (|Du^{\ez}| ^2+\kappa)^{\alpha-1}  u^{\ez}_i   u^\epsilon _i u^\ez  \xi^ {2(\alpha+1) }]_j\, dx\\
&= -  2  \int_U u^\epsilon_j  u^\ez_{js}u^\ez_s  (|Du^{\ez}| ^2+\kappa)^{\alpha-2}  u^{\ez}_i   u^\epsilon _i u^\ez \xi^ {2(\alpha+1) } \, dx\\
&\quad- 2 \int_U u^\epsilon_j   (|Du^{\ez}| ^2+\kappa)^{\alpha-1}  u^{\ez}_{ij }  u^\epsilon _i u^\ez \xi^ {2(\alpha+1) } \, dx\\
&\quad- \int_U u^\epsilon_j   (|Du^{\ez}| ^2+\kappa)^{\alpha-1}  u^{\ez}_{i  }  u^\epsilon _i u^\ez_j  \xi^ {2(\alpha+1) } \, dx\\
&\quad- {2(\alpha+1)} \int_U u^\epsilon_j   (|Du^{\ez}| ^2+\kappa)^{\alpha-1}  u^{\ez}_{i  }  u^\epsilon _i u^\ez \xi _i\xi^ {2\alpha+1 } \, dx\\
&=I_{6,1}+\cdots+I_{6,4}.
\end{align*} 
Replacing $u^{\ez}_{ js}u^{\ez}_ju^\ez_s=\Delta_\infty u^\ez$ by $-\epsilon\Delta u^\ez$ in $I_{6,1},I_{6,2}$ and using Young's inequality \eqref{young} with $p=2$ and $\eta = \alpha/16$, we have 
 \begin{align*}
I_{6,1}+I_{6,2}%&= -  2\int_U u^\epsilon_j  u^\ez_{js}u^\ez_s  (|Du^{\ez}| ^2+\kappa)^{\alpha-2}  u^{\ez}_i   u^\epsilon _i u^\ez\xi^ {2(\alpha+1) } \, dx\\
&=  2 \epsilon \int_U  \Delta u^\ez   (|Du^{\ez}| ^2+\kappa)^{\alpha-2}   |Du^{\ez}|^2   u^\ez  \xi^ {2(\alpha+1) } \, dx\\
&\quad+
  2 \ez  \int_U \Delta u^\epsilon    (|Du^{\ez}| ^2+\kappa)^{\alpha-1}    u^\ez  \xi^ {2(\alpha+1) } \, dx\\
& \le  \frac18 \varepsilon \alpha K_2  +  C(\alpha )\epsilon   \int_U      (|Du^{\ez}| ^2+\kappa)^{\alpha }    \xi^ {2(\alpha+1) } \, dx
\\ & = \frac18 \varepsilon \alpha K_2 + C(\alpha) \varepsilon E_2. 
\end{align*} 
Write 
 \begin{align*} I_{6,3} & =  -   \int_U   (|Du^{\ez}| ^2+\kappa)^{\alpha-1}  | D u^{\ez}|^4 \xi^ {2(\alpha+1) } \, dx \\
 &\le  -  K_1 +2\kappa \int_U   (|Du^{\ez}| ^2+\kappa)^{\alpha} \xi^ {2(\alpha+1) } \, dx
\\&  = - K_1 +  4\kappa E_2. 
\end{align*} 
By \eqref{young} with $p=\frac{2\alpha+2}{2\alpha+1}$ and $\eta=\frac18$    we have 
 \begin{align*} I_{6,4}
&\le    2(\alpha+1)   \int_U    (|Du^{\ez}| ^2+\kappa)^{\alpha+1/2}  | D\xi| u^\ez  \xi^ {2\alpha+1 } \, dx\le  \frac18 K_1+ C (\alpha )J.
\end{align*} 
Combining the estimates for $I_{6,1}$ to $I_{6,4}$, we conclude   \eqref{i6}. This complete the proof of Lemma \ref{better0}. 
\end{proof}

\bigskip 
 \noindent {\bf Acknowledgment.} H. Koch and Y. Zhang have been
 partially supported by the Hausdorff Center for Mathematics.  Y. Zhou
 would like to thank the supports of von Humboldt Foundation, and
 National Natural Science of Foundation of China (No. 11522102).

\end{document}